\newtheorem{theorem}{Theorem}[section]
\newtheorem{lemma}[theorem]{Lemma}
\newtheorem{proposition}[theorem]{Proposition}
\newtheorem{corollary}[theorem]{Corollary}
\newtheorem*{theorem*}{Theorem}
\newtheorem{question}[theorem]{Question}
\newtheorem{conjecture}[theorem]{Conjecture}
\theoremstyle{definition}
\newcommand{\R}{\mathbb{R}}
\newcommand{\Q}{\mathbb{Q}}
\newcommand{\Z}{\mathbb{Z}}
\newcommand{\N}{\mathbb{N}}
\numberwithin{equation}{section}
\title[Fifty years of the Erd\H{o}s similarity conjecture]{Fifty years of the Erd\H{o}s similarity conjecture}
\author{Yeonwook Jung}
\author{Chun-Kit Lai}
\author{Yuveshen Mooroogen}
\begin{document}

\begin{abstract}
 Erd\H{o}s similarity conjecture was proposed by P. Erd\H{o}s in 1974. The conjecture remains open for exponentially decaying sequences as well as Cantor sets that have both Newhouse thickness and Hausdorff dimension zero.  In this article, written after 50 years of the conjecture being proposed, we review progress on some new variants of the original problem: namely, the bi-Lipschitz variant, the topological variant, and a variant ``in the large''. These problems were recently studied by the authors and their collaborators. Each of them offers new perspectives on the original conjecture. 
\end{abstract}

\maketitle

\section{Introduction}

It is a  general intuition that a large set or a large sample should contain many patterns. Many modern mathematical problems can fit into this framework by identifying appropriate notions of ``largeness" and  ``pattern". One of the most natural patterns we consider in this paper will be affine copies of a prescribed set. A non-trivial {\bf affine copy} of a set $A\subset \R$ is the image of an affine map $\lambda A+t$ where $\lambda\ne 0$ and $t\in \R$.  Let $\mathscr{S}$ be a collection of subsets in $\R$. We say that a set $A \subseteq \R^d$ is \textbf{universal in} $\mathscr{S}$ if every set $E \in \mathscr{S}$ contains a nontrivial affine copy of $A$.

In the continuous setting, a natural way to describe the ``largeness"  of sets is via their Lebesgue measure. We call a set $A \subset \R$ \textbf{measure universal} if it is universal in the collection of all measurable subsets of $\R$ that have positive Lebesgue measure. One can show, for example using the Lebesgue density theorem, that every finite set is measure universal; this observation is often attributed to H. Steinhaus \cite{Steinhaus}. In 1974, P. Erd\H{o}s asked whether  finite sets are in fact the \textit{only} sets that are measure universal. 

\begin{question}\label{q:erdos}
    Does there exist an infinite set $A \subset \R$ that is measure universal?
\end{question}

In \cite{Erdos74}, Erd\H{o}s writes: ``I hope there are no such sets''. For this reason, we refer to the claim that there are no infinite measure universal sets as the \textbf{Erd\H{o}s similarity conjecture.} This is one of many problems proposed by Erd\H{o}s that remains open to this day\footnote{It is also recorded in https://www.erdosproblems.com/}. There have been several important developments on this problem between 1974 and 1999. More recently, new progress in the last few years has reinvigorated interest in this research area. The goal of this paper is to highlight these recent advancements and provide the reader with an overview of the current status of the problem. 

\medskip

\subsection{Early work on the Erd\H{o}s similarity conjecture.} 
Let us briefly review some of the important early results on the Erd\H{o}s similarity problem. We refer the reader to the excellent paper \cite{Svetic01} of R. Svetic for a comprehensive survey of the progress made between 1974 and 1999. 

It is a simple observation that if $A$ is not measure universal, then any set $B\supseteq A$ cannot be measure universal. Moreover, no unbounded set can be measure universal.  Therefore,  the conjecture can be resolved if one can show that all bounded strictly decreasing sequences with limit 0 are not measure universal.  

\begin{question}\label{q:erdos2}
    Does there exist a decreasing sequence $a_n \to 0$ that is measure universal?
\end{question}

A {\bf sublacunary} sequence  is a decreasing sequence such that $\lim_{n\to\infty}{a_{n+1}/a_n}=1$. Using a direct Cantor set construction, K. Falconer \cite{Falconer84} and S. Eigen \cite{Eigen85} independently proved that sublacunary sequences are not measure universal. 

\begin{theorem}[Eigen, Falconer]\label{th:eigen-falconer}
    Let $a_n \to 0$ be a decreasing sequence. If 
    \begin{align}\label{eq:sublacunary}
        \lim_{n \to \infty} \frac{a_{n+1}}{a_n} = 1
    \end{align}
    then $(a_n)_{n=1}^\infty$ is not measure universal. 
\end{theorem}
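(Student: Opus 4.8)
The plan is to establish the theorem in the equivalent form of Question~\ref{q:erdos2}: given a sublacunary sequence $A=(a_n)_{n=1}^\infty$, I will construct a set $E\subseteq[0,1]$ with $|E|>0$ containing no nontrivial affine copy of $A$. Two harmless reductions come first. Since $A\subseteq(0,a_1]$ is strictly decreasing to $0$, any affine copy $\lambda A+t$ with $\lambda>0$ is a decreasing sequence converging to $t$ of diameter $\lambda a_1$; for it to be contained in $[0,1]$ one needs $t\in[0,1]$ and $0<\lambda\le1/a_1$, and if the copy is not even contained in $[0,1]$ it is automatically not contained in $E$. Thus it suffices to defeat all copies with $\lambda\in(0,1/a_1]$ and $t\in\R$, the case $\lambda<0$ being the mirror image (one may simply arrange $E$ to be symmetric about $1/2$).

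The set $E$ will be a generalized Cantor set $E=\bigcap_{k\ge0}E_k$, with $E_0=[0,1]$, each $E_k$ a finite union of closed intervals of a common length $\ell_k$, obtained from $E_{k-1}$ by partitioning every constituent interval into $m_k$ equally spaced subintervals of length $\ell_k$ separated by $m_k-1$ gaps of a common length $\delta_k$, so that $\ell_{k-1}=m_k\ell_k+(m_k-1)\delta_k$. The proportion of $E_{k-1}$ removed at stage $k$ is $(m_k-1)\delta_k/\ell_{k-1}$, and by the resulting telescoping product $|E|>0$ precisely when $\sum_k (m_k-1)\delta_k/\ell_{k-1}<\infty$; this will be ensured by keeping each gap $\delta_k$ very small relative to the scale $\ell_{k-1}$. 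The free parameters $\ell_k,\delta_k,m_k$ will be fixed by induction on $k$.

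The avoidance mechanism is powered entirely by the hypothesis $a_{n+1}/a_n\to1$, used in two complementary ways. Split the admissible range $(0,1/a_1]$ of dilation factors into consecutive dyadic-type blocks $I_k$ and assign block $I_k$ to stage $k$. Fix $\lambda\in I_k$ and $t\in\R$. \emph{First}, for every fixed ratio $K$ one has $\#\{n:a_n\in[\varepsilon,K\varepsilon]\}\to\infty$ as $\varepsilon\to0$ (a sublacunary sequence cannot skip a fixed multiplicative window for long), so choosing $I_k$ deep enough forces the ``burst'' of terms $\lambda a_n+t$ lying in a suitable distance range from $t$ to span several full periods of $E_{k-1}$, hence to lie in, and substantially fill, a single constituent interval of $E_{k-1}$. \emph{Second}, $a_n-a_{n+1}=a_n(1-a_{n+1}/a_n)=o(a_n)$, so along that same burst the consecutive gaps $\lambda(a_n-a_{n+1})$ are all smaller than $\delta_k$, provided $\delta_k$ was chosen large compared with $\ell_{k-1}\cdot\sup_{n\ge n_k}(1-a_{n+1}/a_n)$ for the appropriate threshold $n_k$. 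But a window longer than one period $\ell_{k-1}/m_k$ that is covered by points with consecutive gaps below $\delta_k$ must meet a removed gap of stage $k$; hence $\lambda A+t\not\subseteq E_k\supseteq E$. As $k,\lambda,t$ were arbitrary, $E$ contains no nontrivial affine copy of $A$.

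The delicate point — and the reason the theorem is not a soft observation like Steinhaus's — is that the rate at which $1-a_{n+1}/a_n$ tends to $0$ is not quantified, so the construction cannot be written down in closed form: the triple $(\ell_k,\delta_k,m_k)$ and the block $I_k$ must be chosen recursively, reading off at each stage a threshold $n_k$ and a smallness bound from the sequence's own modulus of sublacunarity, \emph{while simultaneously} keeping $\sum_k (m_k-1)\delta_k/\ell_{k-1}<\infty$ so that $|E|>0$. Reconciling these competing demands — gaps at each scale wide enough to catch every burst, yet small enough in aggregate to preserve positive measure, with all estimates uniform in the translation $t$ and valid across the infinitely many scales $\lambda\to0$ — is the whole content of the proof; once the bookkeeping is organized correctly, each individual estimate is elementary.
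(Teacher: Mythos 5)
Your strategy is essentially the paper's: a Cantor-type set built by removing, at each stage $k$, a periodic family of very short gaps, with sublacunarity used exactly twice --- once to make all consecutive gaps $\lambda(a_n-a_{n+1})$ of a tail fall below the gap width $\delta_k$, and once to make the diameter $\lambda a_N$ of that same tail exceed one period --- so that the tail cannot both stay inside a single constituent interval and reach as far as it must. That is precisely the mechanism of the proof of Theorem \ref{thm-decreasing}(1), and the competing demands you worry about in your last paragraph are reconciled there by a very short recursion: pick $n_k$ with $(a_{n_k}-a_{n_k+1})/a_{n_k}\le k^{-2}4^{-k}$, set $\delta_k=k(a_{n_k}-a_{n_k+1})$ and period roughly $a_{n_k}/k$, so the stage-$k$ loss is $O(4^{-k})$; no partition of the dilation range into blocks $I_k$ is needed, since a single stage $k$ defeats every $\lambda$ with $\max(|\lambda|,|\lambda|^{-1})<k/2$ (the paper even gets the bi-Lipschitz strengthening this way, using Lemma \ref{lem-3.1} to regularize the gap sequence; in the purely affine case your observation that $\sup_{n\ge N}(a_n-a_{n+1})=o(a_N)$ suffices). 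One sentence of yours is internally inconsistent and should be repaired: a burst cannot ``span several full periods of $E_{k-1}$'' and simultaneously ``lie in a single constituent interval of $E_{k-1}$''; what you mean is that it spans several periods of the \emph{stage-$k$} subdivision while sitting inside one constituent interval of $E_{k-1}$. Also, the count $\#\{n: a_n\in[\varepsilon,K\varepsilon]\}\to\infty$ is not the quantity you need --- what matters is the diameter of the tail versus its largest consecutive gap, not the number of its terms. With those points fixed, the blueprint is sound and the deferred bookkeeping is genuinely routine.
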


No one has been able to construct faster-decreasing sequences that are not measure universal. In particular, determining whether $(2^{-n})_{n=1}^\infty$ is  measure universal has been the major bottleneck of the problem.  It is therefore natural to consider other types of sets beyond decreasing sequences.  J. Bourgain \cite{Bourgain87} showed that a set $X$ is measure universal if and only if there exists $C>0$ such that for all $J>0$, all finite subsets $Y$ of $X$, all ${\bf v}\in\R^J$ and  all continuous $f:{\mathbb T}^J\to\R$,    
\begin{equation}\label{eq-bourgain}
    \int_{{\mathbb T}^J} \inf_{1<\lambda<2} ~\sup_{y\in Y}~ |f({\bf x}+ \lambda y{\bf v})|~ d{\bf x} \leq C \int_{{\mathbb T}^J} |f({\bf x})|~d{\bf x},
\end{equation}
where ${\mathbb T}^J$ is the $J$-dimensional torus.  He used this characterization to show the following

\begin{theorem}[Bourgain]\label{th:bourgain}
If $A_1, A_2, A_3 \subseteq \mathbb{R}$ are infinite, then the sumset $A_1 + A_2 + A_3 = \{a + b + c : a \in A_1, b \in A_2, c \in A_3\}$ is not measure universal. 
\end{theorem}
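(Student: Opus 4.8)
The plan is to invoke Bourgain's own characterization \eqref{eq-bourgain}. By it, to show that $X := A_1 + A_2 + A_3$ is \emph{not} measure universal it suffices to show that the inequality in \eqref{eq-bourgain} fails for every constant $C$: for each $C>0$ we must exhibit a dimension $J$, a finite set $Y \subseteq X$, a vector $\mathbf{v} \in \R^J$, and a continuous $f:\mathbb{T}^J\to\R$ with
\[
\int_{\mathbb{T}^J} \inf_{1<\lambda<2}\ \sup_{y\in Y}\ |f(\mathbf{x}+\lambda y\mathbf{v})|\,d\mathbf{x} \ >\ C\int_{\mathbb{T}^J}|f(\mathbf{x})|\,d\mathbf{x}.
\]
I would take for $f$ a continuous function with $\mathbf{1}_U\le f\le \mathbf{1}_{U'}$ for a small open set $U$ and a slightly larger open set $U'\supseteq\overline U$, so that $\int|f|$ is comparable to $|U|$, while the left-hand side equals $1$ as soon as the translates $\{\,U+\lambda y\mathbf{v} : y\in Y\,\}$ cover $\mathbb{T}^J$ for \emph{every} $\lambda\in(1,2)$. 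This reduces everything to a purely geometric statement: since $A_1,A_2,A_3$ are infinite, one can, for every $\varepsilon>0$, choose $J$, a finite $Y\subseteq A_1+A_2+A_3$, and $\mathbf{v}\in\R^J$ together with a set $U\subseteq\mathbb{T}^J$ of measure $|U|<\varepsilon$ such that $\bigcup_{y\in Y}(U+\lambda y\mathbf{v})=\mathbb{T}^J$ for all $\lambda\in(1,2)$. Applying \eqref{eq-bourgain} with $\varepsilon$ chosen so that $\int|f|<1/C$ then completes the proof.

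To prove the geometric statement I would first pass to finite subsets $F_i\subseteq A_i$ with $|F_i|=N$ and set $Y=F_1+F_2+F_3$, so that $|Y|\ge 3N-2\to\infty$; since a covering set necessarily has $|U|\ge 1/|Y|$, the fact that $|Y|\to\infty$ means this is no obstruction to making $|U|<\varepsilon$, and the content is to show such a small $U$ actually exists. To construct $U$ and $\mathbf{v}$ I would use the probabilistic method in a large dimension $J=J(N)$: take $\mathbf{v}$ random from a suitable distribution, let $U$ be a random union of small congruent boxes of total measure about $\varepsilon$, and bound the probability that some pair $(\mathbf{x},\lambda)$ is left uncovered. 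Because $\lambda$ ranges over a continuum, I would first replace $(1,2)$ by a finite $\delta$-net — the translation $\mathbf{x}\mapsto\mathbf{x}+\lambda y\mathbf{v}$ is Lipschitz in $\lambda$, so enlarging $U$ by a $\delta\,\max_{y\in Y}|y|\,\|\mathbf{v}\|$-neighbourhood reduces the task to finitely many dilations — and then run a union bound (or second-moment estimate) over the net. The triple sumset enters precisely here: writing $\lambda y\mathbf{v}=\lambda a_1\mathbf{v}+\lambda a_2\mathbf{v}+\lambda a_3\mathbf{v}$ displays $\{\lambda y\mathbf{v} : y\in Y\}$ as a Minkowski sum of three families indexed by $F_1,F_2,F_3$, and the resulting near-independence across these three families is what makes the probability of a bad event decay fast enough to absorb both the $\delta$-net over $\lambda$ and the union over $\mathbf{x}$ while still permitting $|U|\to 0$ as $N\to\infty$.

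The main obstacle, and the step I expect to be hardest, is the uniformity over $\lambda\in(1,2)$: a single small set $U$ must cover $\mathbb{T}^J$ simultaneously under all dilations $\lambda Y\mathbf{v}$. Naive attempts to ``decouple the three summands'' fail, because the common factor $\lambda$ rescales all of $\mathbf{v}$ at once, so one cannot arrange the contributions of $F_2$ and $F_3$ to vanish on the $F_1$-coordinates uniformly in $\lambda$. Making the quantitative parameters balance — the dimension $J$, the cardinality $N$, the target measure $\varepsilon$, and the fineness $\delta$ of the $\lambda$-net, so that the combined union/second-moment bound is $o(1)$ — is the technical heart of the argument, and it is exactly this balancing that exploits having three summands rather than two.
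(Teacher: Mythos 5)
Your setup matches the route the paper sketches for this theorem: apply Bourgain's criterion \eqref{eq-bourgain} in the contrapositive, take $f$ to be (a mollification of) the indicator of a small random union $U$ of congruent cubes with $\int|f|\sim\varepsilon$, and show that with positive probability the left-hand side equals $1$, i.e.\ that for every $\mathbf{x}$ and \emph{every} $\lambda\in(1,2)$ some point $\mathbf{x}+\lambda y\mathbf{v}$ lands in $U$. The gap is that the one step where the hypothesis that $X$ is a \emph{triple sumset} must do any work is replaced by the unsubstantiated assertion that ``near-independence across these three families'' makes the bad-event probability decay fast enough, and this cannot be rescued by the $\delta$-net-plus-union-bound (or second-moment) scheme you describe. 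If that scheme closed, it would apply verbatim with $Y$ a large finite subset of an \emph{arbitrary} infinite set, and would therefore settle the Erd\H{o}s similarity conjecture itself. Quantitatively: to treat the events ``$\mathbf{x}+\lambda y\mathbf{v}\notin U$'', $y\in Y$, as independent you need the $|Y|$ points to occupy distinct cubes for every $\lambda$ in your net, which ties the cube side $s$ --- and hence the size of the $\lambda$-net and of the grid of $\mathbf{x}$-cells, both of order $s^{-O(J)}$ --- to the minimal gap of $\lambda Y\mathbf{v}$ on the torus; in general this forces $\log(1/s)\gtrsim|Y|$, so the union-bound cost $e^{c|Y|}$ swamps the gain $(1-\varepsilon)^{|Y|}$ once $\varepsilon\to0$, which is exactly why the problem is open for a single fast sequence.

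What actually closes Bourgain's argument (and Tao's exposition of it, which the paper cites) is the entropy decrement that the paper's sketch names explicitly: because every coordinate of the pattern $(\lambda y\mathbf{v}\bmod 1)_{y\in Y}$ with $Y=F_1+F_2+F_3$ has the form $\lambda a\mathbf{v}+\lambda b\mathbf{v}+\lambda c\mathbf{v}$, the family of patterns swept out as $\lambda$ ranges over $(1,2)$ has far smaller metric entropy than the trivial count, and it is this entropy estimate --- a statement about \emph{correlation}, not independence, among the $N^3$ highly dependent points determined by only $3N$ parameters --- that absorbs the infimum over $\lambda$. You correctly identify the uniformity in $\lambda$ as the hardest step and even observe that naive decoupling of the three summands fails, but you then leave that step unresolved; since it is the entire content of the theorem, the proposal as written has a genuine gap.
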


This result cannot be deduced from Theorem \ref{th:eigen-falconer} as the sets $A_i$ may be sequences of arbitrarily rapid decay. The idea behind Theorem \ref{th:bourgain} is to construct a function that fails (\ref{eq-bourgain}). With a suitable mollification, we may assume $f$ is a random sum of indicator functions of small cubes, and each cube has a probability $\varepsilon$ of being selected. Then $\int|f|  \sim \varepsilon$. When there is an additive structure of three infinite sets, there is a decrease of entropy which forces the left-hand side to be equal to 1 with a very high probability. We refer the readers to T. Tao's article \cite{Tao2021}, where proof of this result is presented as an application of several tools developed by Bourgain. 






It is conceivable that a probabilistic construction---choosing basic intervals independently with a certain probability distribution---could yield an avoiding set for fast-decaying sequences. This strategy was developed by M. Kolountzakis in \cite{Kolountzakis97}. One of his results is the following criterion for a sequence not to be measure universal. 

\begin{theorem}[Kolountzakis]
    If an infinite set $A \subset \mathbb{R}$ contains, for each $n \in \N$, elements $a_1 > a_2 > \ldots > a_n > 0$ such that $-\log(\delta_n) = o(n)$, where
    \begin{align*}
        \delta_n = \min_{i \in \{1, \ldots, n-1\}} \frac{a_i - a_{i+1}}{a_1},
    \end{align*}
    then $A$ is not measure universal.
\end{theorem}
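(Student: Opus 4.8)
The plan is to construct, by a random multi-scale (Cantor-type) procedure of the kind anticipated just above, a single subset of $[0,1]$ of positive Lebesgue measure that contains no affine copy of $A$; producing such a set shows at once that $A$ is not measure universal. I would begin with two reductions. Since any affine copy of $A$ contains an affine copy of each of the finite configurations $\{a_1>a_2>\dots>a_n>0\}\subseteq A$ furnished by the hypothesis, it suffices to build a positive-measure set avoiding affine copies of these configurations, for $n$ running over a rapidly increasing sequence $(n_k)$ to be chosen. Because "affine copy" is itself an affinely invariant notion, I may then replace the $n$-th configuration by its affine normalisation with least element $0$ and greatest element $1$; this rescaling only enlarges the normalised minimal gap, so the normalised configurations still satisfy $-\log\delta_n'=o(n)$ with $\delta_n'\ge\delta_n$. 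For a configuration of diameter $1$, an affine copy $\lambda\cdot(\text{config})+t$ fits inside $[0,1]$ only when $0<\lambda\le 1$ (the case $\lambda<0$ is handled symmetrically using the reflected configuration, which has the same minimal gap), so the task becomes: produce $E\subseteq[0,1]$ of positive measure avoiding all copies with $\lambda\in(0,1]$.

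For the construction I would put $E=\bigcap_{k\ge 1}R_k$ with the $R_k$ mutually independent, where $R_k$ arises by partitioning $[0,1]$ into $M_k$ equal intervals, with $M_k$ of order $2^{k}/\delta_{n_k}$, and retaining each interval independently with probability $1-\epsilon_k$. The resolution $M_k$ is chosen for scale matching: when $\lambda$ lies in the dyadic window $[2^{-k},2^{-k+1}]$, the $n_k$ points of any copy of the $n_k$-th configuration are pairwise separated by at least $2^{-k}\delta_{n_k}$, which exceeds the interval length $1/M_k$; hence those $n_k$ points occupy $n_k$ \emph{distinct} level-$k$ intervals, and a fixed such copy lies inside $R_k$ with probability exactly $(1-\epsilon_k)^{n_k}$. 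As $(\lambda,t)$ ranges over the relevant planar region, the "combinatorial type" of a copy --- the list of which level-$k$ intervals its points fall in --- changes only across $O(n_kM_k)$ lines, so there are $O\!\bigl((n_kM_k)^2\bigr)$ types; a union bound therefore gives
\[
\mathbb{P}\bigl(E\ \text{contains a copy with}\ \lambda\in[2^{-k},2^{-k+1}]\bigr)\ \le\ C\,(n_kM_k)^2(1-\epsilon_k)^{n_k}\ \le\ \exp\!\bigl(O(k)+O(\log n_k)+2(-\log\delta_{n_k})-\epsilon_k n_k\bigr).
\]

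It remains to calibrate the parameters. I would take $\epsilon_k=c\,2^{-k}$ for a small constant $c$, so that $\sum_k\epsilon_k$ is small and $\mathbb{P}\bigl(|[0,1]\setminus E|\ge\tfrac12\bigr)$ is small; and then, using the hypothesis $-\log\delta_{n_k}=o(n_k)$ (together with $\log n_k=o(n_k)$), choose $n_k\to\infty$ rapidly enough that the exponent on the right-hand side above is very negative, say at most $-k$, for every $k$. A first-moment bound over $k$, and over the symmetric family of random sets handling $\lambda<0$, then shows that with positive probability $E$ simultaneously has measure greater than $\tfrac12$ and contains no affine copy of any configuration --- hence no affine copy of $A$ at all.

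The hard part is the joint calibration of the three sequences $M_k$, $\epsilon_k$, $n_k$: the resolution must out-resolve the copies, forcing $M_k$ to be as large as $2^k/\delta_{n_k}$ and hence making the union-bound factor $(n_kM_k)^2$ large; the total hole density must be summable, forcing each $\epsilon_k$ small and the killing at each scale weak; and the only remaining freedom is to let $n_k$ be enormous. The balance closes exactly when $\log(1/\delta_{n_k})$ can be absorbed into $\epsilon_k n_k$ --- which is precisely what the quantitative hypothesis $-\log\delta_n=o(n)$ provides, since it says $\delta_n$ does not decay faster than sub-exponentially. A subsidiary point worth flagging is that one must defeat all dilation scales at once with a single positive-measure set, which is why the construction is genuinely an intersection over all scales rather than a one-shot random set at a fixed resolution.
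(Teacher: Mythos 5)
Your overall architecture --- a multi-scale random Cantor-type set, a union bound over the $O((n_kM_k)^2)$ combinatorial types of a two-parameter family of copies, and the calibration $\epsilon_k n_k \gg \log(1/\delta_{n_k}) + \log n_k + O(k)$ with $\epsilon_k$ summable, which is exactly where the hypothesis $-\log\delta_n = o(n)$ enters --- is the right one, and it is the probabilistic strategy of \cite{Kolountzakis97} that this survey alludes to without reproducing. That part of your write-up is sound. The gap is in the reduction at the very start and in the final deduction. First, the literal claim that ``it suffices to build a positive-measure set avoiding affine copies of these configurations'' is not just unproven but impossible: by the Steinhaus/Lebesgue-density observation quoted in the introduction, \emph{every} set of positive measure contains affine copies of every finite set, so no set can avoid all affine copies of even one configuration $C_{n_1}$. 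What your construction actually delivers is weaker and scale-restricted: $R_k$ excludes copies of the $n_k$-point configuration only at normalized dilations $\lambda\in[2^{-k},2^{-k+1}]$.

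The deduction ``hence no affine copy of $A$'' then requires a matching argument that is missing. Fix an affine map $u\mapsto \mu u+s$ with $\mu A+s\subseteq E$. The induced copy of the $n_k$-th configuration is a copy of your normalized $C_{n_k}$ with dilation $\lambda_k=|\mu|\cdot\operatorname{diam}(F_{n_k})$, where $F_{n_k}=\{a_1>\dots>a_{n_k}\}$ is the subset of $A$ furnished by the hypothesis \emph{for that particular} $n_k$. The hypothesis does not say these configurations are nested, have comparable diameters, or sit in any fixed part of $A$; they may shrink or drift inside $A$ as $n$ grows (e.g.\ $\operatorname{diam}(F_{n_k})=4^{-k}$ is consistent with $-\log\delta_n=o(n)$). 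Consequently there is no reason that $\lambda_k\in[2^{-k},2^{-k+1}]$ for even a single $k$, and if it never is, your set $E$ excludes nothing along the orbit of $(\mu,s)$. Your narrative implicitly treats $A$ as a single normalized sequence dilated by one $\lambda$ (in which case the diameters $\operatorname{diam}(F_{n_k})$ are all comparable and the windows can be widened to overlap, fixing the matching easily); for the general hypothesis an additional idea is needed --- e.g.\ having the scale-$k$ set exclude copies of $C_{n_k}$ over a range of dilations chosen relative to $\operatorname{diam}(F_{n_k})$, and checking that the extra factor $\log(1/\operatorname{diam}(F_{n_k}))$ this injects into the resolution can still be absorbed. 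As written, the proof does not close.
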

This theorem says that if $A$ contains arbitrary long ``chunks'' of slow-decaying sequences, then $A$ is not measure universal. Clearly, this generalizes Theorem \ref{th:eigen-falconer}. It can also be used to show that sumsets of the form $\{2^{-n^\alpha}\} + \{2^{-n^\alpha}\}$ are not measure universal for any $\alpha \in (0,2)$.  Kolountzakis also established the following almost everywhere solution to the Erd\H{o}s similarity problem. 

\begin{theorem}[Kolountzakis]\label{th:almost-erdos-similarity-kol97}
    For every infinite set $A \subseteq \R$, there exists a measurable set $E_1 \subset [0,1]$ of Lebesgue measure arbitrarily close to 1 such that
    \begin{align*}
        m\big(\{y: \text{there exists $x \in \R$ such that $x + yA \subset E_1$}\}\big) = 0.
    \end{align*}
Moreover, there also  exists a measurable set $E_2$ of positive Lebesgue
measure such that the set
$$
\{(\lambda, t) \in \R^2 : \lambda A+t \subset  E_2\}
$$
has two-dimensional Lebesgue measure zero.
\end{theorem}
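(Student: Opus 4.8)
\emph{Proof strategy.} The plan is to produce $E_1$ by a probabilistic construction, after a chain of reductions. Write $Y_A(E):=\{y\in\R:\ \exists\, x\in\R,\ x+yA\subset E\}$, so the first assertion is that $E_1\subset[0,1]$ can be taken with $m([0,1]\setminus E_1)$ arbitrarily small and $m(Y_A(E_1))=0$. The reductions rest on three elementary observations: (i) if $B\subseteq A$ then $Y_A(E)\subseteq Y_B(E)$; (ii) if $E'\subseteq E$ then $Y_A(E')\subseteq Y_A(E)$; (iii) $x+y(cA+s)=(x+ys)+(cy)A$ for $c\neq0$, so $Y_{cA+s}(E)=\tfrac1c Y_A(E)$. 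If $A$ has no finite accumulation point it is unbounded, and then for $E\subset[0,1]$ and $y\neq0$ the set $yA$ is unbounded and cannot be translated into $E$, so $Y_A(E)\subseteq\{0\}$ and any $E$ with $m(E)$ near $1$ works. Otherwise, by (i) and (iii) we may assume $A=\{a_k\}_{k\ge1}$ with $1>a_1>a_2>\cdots\to0$. For $E\subset[0,1]$, $x,x+ya_1\in[0,1]$ forces $|y|\le 1/a_1$, so $Y_A(E)$ is bounded, and as $Y_{-A}(E)=-Y_A(E)$ it suffices to control $y\in(0,1/a_1]$. By (ii), it is enough to prove the \emph{finitary} statement: for every $\varepsilon>0$ there is $E_\varepsilon\subset[0,1]$ with $m([0,1]\setminus E_\varepsilon)<\varepsilon$ and $m(Y_A(E_\varepsilon))<\varepsilon$. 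Indeed $E_1:=\bigcap_{j\ge1}E_{\varepsilon 2^{-j}}$ then satisfies $m([0,1]\setminus E_1)<\varepsilon$ and, by (ii), $m(Y_A(E_1))\le\inf_j\varepsilon 2^{-j}=0$. Finally the ``moreover'' is the case $E_2=E_1$: the projection to the $\lambda$-axis of $\{(\lambda,t):\lambda A+t\subset E_1\}$ equals $Y_A(E_1)$, a null set, so that planar set is null.

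To build $E_\varepsilon$, fix $\varepsilon>0$ and remove from $[0,1]$ a random, multiscale family of short intervals: over a sequence of scales adapted to $(a_k)$, delete at each scale a periodic family of intervals with an independent uniform phase, the densities chosen so that the expected total deleted measure is $<\varepsilon/2$. Writing $S_y:=\{x:\ x+yA\subset E_\varepsilon\}$ for the random set of good translates at dilation $y$, the aim is to show that for every $y\in(0,1/a_1]$ outside a set of measure $<\varepsilon/2$ one has $S_y=\emptyset$ almost surely; Fubini then gives $\mathbb{E}\,m(Y_A(E_\varepsilon))=\int_0^{1/a_1}\mathbb{P}(S_y\neq\emptyset)\,dy<\varepsilon/2$, and a realization with both required bounds exists. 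The reason several scales are needed is that, since $A$ accumulates at $0$, within one period of the scale-$j$ holes the points $\{ya_k\}_k$ already lay down a pattern that is long on a logarithmic scale, so the scale-$j$ holes obstruct a proportion of phases far exceeding their density; the phases being independent, these obstructions compound over scales.

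The step I expect to be the real obstacle is the quantifier ``\emph{for every} $x$''. Excluding a single $x$ with $x+yA\subset E_\varepsilon$ is easy — for fixed $x$ the probability is a rapidly decaying product over scales — but upgrading ``almost surely no fixed $x$ works'' to ``almost surely no $x$ works'' requires genuine uniformity in $x$. A naive union bound over a net of candidate positions does not close: the number of candidates one must defeat grows at essentially the same rate as the inverse per-candidate survival probability, and already for $A=\{2^{-n}\}$ a single scale of deletions would have to remove more than half of $[0,1]$ — in spirit, the same arithmetic obstruction that makes $(2^{-n})_n$ the bottleneck of Theorem~\ref{th:eigen-falconer}. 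One therefore needs a construction rigid enough that the random good set $S_y$ is not merely of small measure but actually empty for a.e.\ $y$; choosing the scales, densities, and resolutions so that this holds uniformly in the relevant $x$ and outside a small set of $y$ is the technical heart of the argument, and the part I would expect to require the most care.
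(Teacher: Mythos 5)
First, a caveat: the survey you are comparing against does not actually prove this theorem --- it is quoted from Kolountzakis's paper \cite{Kolountzakis97} with no argument supplied --- so the comparison can only be with the general shape of the known proof. Your reductions are correct and are the standard ones: passing via observations (i)--(iii) to a monotone sequence $a_k\downarrow 0$, the boundedness of $Y_A(E)$, the sufficiency of the finitary $\varepsilon$-version together with $E_1=\bigcap_j E_{\varepsilon 2^{-j}}$, and the deduction of the two-dimensional ``moreover'' from the one-dimensional statement by noting that the good set of pairs lies inside $Y_A(E_2)\times\R$, a planar null set. None of this, however, touches the substance of the theorem.

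The substance is the claim that a fixed random multiscale deletion of expected total measure $<\varepsilon/2$ satisfies $\int\Pr[S_y\neq\emptyset]\,dy<\varepsilon/2$, and you do not prove it; you explicitly defer ``the technical heart.'' More importantly, the outline contains no mechanism by which the restriction to \emph{almost every} $y$ is exploited: the scales, densities, and phases are chosen once and for all, and your heuristic for why the holes ``obstruct a proportion of phases far exceeding their density'' invokes no property that typical $y$ possess and exceptional $y$ lack. If the compounding-over-scales argument closed for \emph{every} $y\in(0,1/a_1]$, it would produce a positive-measure set avoiding all affine copies of $(2^{-n})_{n=1}^\infty$ and thereby settle the open bottleneck case of the conjecture; and your own back-of-envelope computation for $A=\{2^{-n}\}$ shows that for a fixed $y$ the per-scale cost of defeating all $x$ is unaffordable, an obstruction that iterating over independent scales does not remove. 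Some input that singles out a full-measure set of good dilations is therefore indispensable --- in Kolountzakis's argument this is exactly where averaging in $y$, i.e.\ the distribution of the points $ya_n$ relative to the deleted grids for typical $y$, enters (compare the role of the exceptional set $\mathbf{E}$ in Theorem \ref{th:almostlarge} of this survey) --- and it is entirely absent from your proposal. As written, this is a correct reduction of the theorem to its hardest step, not a proof of it.
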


Finally, we would be remiss not mention that the conjecture has also inspired a large body of work concerning the existence of various point configurations in fractal sets. For any sequence of sets containing at least three points, T. Keleti \cite{K08} constructed a compact subset of the real line with Hausdorff dimension $1$ and Lebesgue measure zero that contains no affine copy of any of the sets in that sequence. This shows that finite sets cannot be universal in sets of Hausdorff dimension one. Keleti's result was generalized to arbitrarily gauge functions $h$ such that $h(x)/x\to0$ as $x\to 0^+$ as well as in higher dimensions by Yavicoli \cite{Ya2021}.  By assuming certain Frostman and Fourier decay conditions, I. {\L}aba and M. Pramanik \cite{LP09} showed that a large class of fractal sets contain  non-trivial 3-term arithmetic progressions. Meanwhile, P. Shmerkin \cite{S2017} constructed Salem sets of full Hausdorff dimension and Fourier dimension that contain no 3-term arithmetic progression. A recent in-depth study of the detection of patterns in relation to the Fourier dimension can be found in \cite{LP22}.  At the opposite extreme of the Erd\H{o}s problem, one can find small fractal sets containing many patterns: in \cite{DMT60} and \cite{UY2020}, it is shown that there exist perfect sets of real numbers with Hausdorff dimension zero that contain affine copies of all finite sets (and more generally, polynomial copies of finite sets and some countable patterns too). 


\subsection{Overview of this paper.} This paper will survey several novel results obtained in the last few years.  

In Section \ref{section 2}, we introduce a notion of bi-Lipschitz measure universality. This relaxes the requirement of having an affine copy of a sequence to a bi-Lipschitz copy. This notion was first proposed in \cite{Bilipschitz}, where it is proved that all lacunary sequences are indeed bi-Lipschitz measure universal, while sublacunary sequences are still not bi-Lipschitz measure universal (this provides a new proof of Theorem \ref{th:eigen-falconer}). We also indicate that we can study the size of the set of ``Erd\H{o}s points'' of a compact set in order to study the similarity conjecture. Erd\H{o}s points were first considered in \cite{CruzLaiPramanik23}.

The first uncountable set that is not measure universal was discovered in \cite{GallagherLaiWeber23}. In Section \ref{Section 3}, we recast measure universality as a stable sumset problem. As a consequence, we show that Cantor sets of positive Newhouse thickness or positive Hausdorff dimension are not ``full measure universal'', a slightly stronger universality than measure universality. 

In Section \ref{Section 4}, we study ``topological universality''. It turns out that we can completely describe all topologically universal sets. The existence of an uncountable topologically universal set is independent of the $\mathsf{ZFC}$ axioms of set theory.

In Section \ref{sec:large}, we study a variant of the Erd\H{o}s similarity problem ``in the large''; i.e. for unbounded sets. It was first proposed by L. Bradford, H. Kohut and the third-named author \cite{BKM23} and shortly thereafter generalized by M. Kolountzakis and E. Papageorgiou \cite{KolountzakisPapageorgiou23}. We believe that this is the correct dual problem to the original Erd\H{o}s problem. In a recent paper \cite{GMY24}, X. Gao, C. H. Yip, and the third-named author connected this problem to estimates on certain well-studied objects in metric number theory. We give a short survey of this relationship in Section \ref{section 6}.

Unfortunately, there is still no precise statement saying the Erd\H{o}s similarity problem and the Erd\H{o}s similarity problem in the large are actually equivalent. In \cite{BGKMW23}, by slightly improving the universality ``in the large", it was shown that there exists a set containing no geometric sequences decreasing to a Lebesgue density point. We give a short proof in Section \ref{sec:connections}. 

Using probabilistic methods, Kolountzakis recently showed that certain Cantor sets with Newhouse thickness zero are also not measure universal \cite{Kol-cantor-set}. His method appeared in \cite{Kolountzakis97,KolountzakisPapageorgiou23,Kol-cantor-set} has been successfully applied to all different variants of the Erd\H{o}s similarity problem. Readers are invited to consult these articles for details; they are self-contained and well-written.

\section{Decreasing sequences and bi-Lipschitz embedding}\label{section 2}

In this section, we study universality via a bi-Lipschitz perspective originally proposed in \cite{Bilipschitz}.

Recall that $f:\R\to\R$ is a bi-Lipschitz mapping if there exists a constant $L\ge 1$ such that 
$$
L^{-1}|x-y|\le |f(x)-f(y)|\le L|x-y|.
$$
We say that a set $A$ is {\bf bi-Lipschitz measure universal} if for all measurable sets $E$ of positive Lebesgue measure, we can find a bi-Lipschitz map $f$ such that $f(A)\subset E$. Clearly, an affine map is a bi-Lipschitz map.  Although the original Erd\H{o}s similarity problem remains open, a rather sharp result about a bi-Lipschitz variant was obtained in \cite{Bilipschitz}. 

\begin{theorem}[Feng--Lai--Xiong]\label{thm-decreasing}
Let $A = (a_n)_{n=1}^{\infty}$ be a strictly decreasing sequence converging to $0$ and $E$ be a measurable set of positive Lebesgue measure on $\R^1$.  
\begin{enumerate}
    \item Suppose that $\lim_{n\to\infty} \frac{a_{n+1}}{a_n}=1$. Then $A$ is not bi-Lipschitz measure universal.
        \item Suppose that $\limsup_{n\to\infty} \frac{a_{n+1}}{a_n}<1$. Then there exists a bi-Lipschitz map $f: \R\to\R$ such that $f(A)\subset E$.
\end{enumerate}
\end{theorem}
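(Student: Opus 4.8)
The plan is to prove the two parts as a dichotomy mirroring the sublacunary/lacunary split, and the observation that ties everything together (and explains why (1) and (2) are not in conflict) is that a bi-Lipschitz map cannot destroy sublacunarity. Indeed, a bi-Lipschitz $f$ with constant $L$ is automatically monotone, say increasing, and if $A=(a_n)$ decreases to $0$ then, setting $b_n:=f(a_n)-f(0)$, one has $1-\frac{b_{n+1}}{b_n}=\frac{b_n-b_{n+1}}{b_n}\le L^2\bigl(1-\frac{a_{n+1}}{a_n}\bigr)$, together with $b_n\in[a_n/L,La_n]$ and $b_n-b_{n+1}\in[(a_n-a_{n+1})/L,\,L(a_n-a_{n+1})]$. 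So every bi-Lipschitz copy of $A$ is a translate by $f(0)$ of a sublacunary sequence comparable to $A$; this pins down exactly the configurations that must be excluded in (1), and since a genuinely sublacunary $A$ is not lacunary, (2) simply does not apply to it.

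For part (1) I would adapt the Falconer/Eigen Cantor-set construction, choosing the scales so as to absorb bi-Lipschitz distortion. Put $\eta(\rho):=\sup\{1-a_{n+1}/a_n:0<a_n\le\rho\}$; sublacunarity is exactly $\eta(\rho)\to0$ as $\rho\to0^+$, and for $\rho$ below a threshold the block $B_\rho:=A\cap[\rho,2\rho]$ has span $\ge\rho/6$ with consecutive gaps $\le 2\eta(2\rho)\rho$. Build $E=\bigcap_k E_k$ where $E_k$ is a union of intervals of length $d_k$, passing from $E_{k-1}$ to $E_k$ by tiling each length-$d_{k-1}$ interval with periods equal to one length-$d_k$ interval plus one gap of length $h_k$; the proportion removed at stage $k$ is $\varepsilon_k=h_k/(d_k+h_k)$ and $|E|=\prod_k(1-\varepsilon_k)>0$ as soon as $\sum_k\varepsilon_k<\infty$. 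The key freedom is that $d_k$ and $\varepsilon_k$ are independent: I would take $\varepsilon_k\asymp k^2\eta(C k\,d_k)$ for a suitable constant $C$ and then pick $d_k\downarrow0$ recursively so rapidly that $\eta(C k\,d_k)<2^{-k}/(Ck^2)$ — possible precisely because $\eta(\rho)\to0$ — which keeps $\sum_k\varepsilon_k$ finite while making each gap $h_k\asymp\varepsilon_k d_k$ large relative to its scale $d_k$. Now if $f$ is bi-Lipschitz with constant $L$ and $f(A)\subseteq E$, choose $k\ge L$ and examine the image of $B_\rho$ with $\rho:=C' L d_k$: its convex hull has length $\gtrsim d_k$ times a large constant, hence contains a full stage-$k$ period, hence an $E$-gap of size $\ge h_k$, while its maximal consecutive gap is $\le 2L\eta(2\rho)\rho\le \varepsilon_k d_k<h_k$; since a subset of $E$ with all consecutive gaps $<h_k$ cannot have points on both sides of an $E$-gap of length $>h_k$, this is a contradiction. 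This simultaneously reproves Theorem \ref{th:eigen-falconer}.

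The hard part of (1) is exactly the interplay just described. Because $|E|>0$ forces $\varepsilon_k\to0$, one cannot use $E$-gaps of a fixed relative size, and a naive argument breaks down for sequences that are only barely sublacunary (e.g.\ $\eta(\rho)\sim1/\log\log(1/\rho)$); the cure is to let the interval lengths $d_k$ decrease super-geometrically at a rate dictated by the modulus $\eta$ of $A$, and to diagonalise over the bi-Lipschitz constant by arranging that stage $k$ defeats all $L\le k$. One must also handle decreasing $f$ (symmetrise $E$, or run the argument for both orientations) and verify the elementary density statement for $B_\rho$; both are routine once the construction is in place.

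For part (2), the plan is to plant a distorted copy of $A$ near a density point of $E$. Pass to a closed subset of positive measure, fix a density point $x_0\in E$ and $r_1>0$ with $|E^{c}\cap(x_0,x_0+\rho)|\le\varepsilon_1\rho$ for all $\rho\le r_1$, where $\varepsilon_1$ is a small constant depending only on $q:=\limsup_n a_{n+1}/a_n<1$; let $N$ be a threshold with $a_{n+1}/a_n\le q':=(1+q)/2$ for all $n\ge N-1$, so $g_n:=a_n-a_{n+1}\ge(1-q')a_n$ there. Place the finite head $\{a_1,\dots,a_{N-1}\}$ exactly: the density estimate with $\varepsilon_1<1/N$ produces a small $\mu>0$ with $x_0+\mu a_n\in E$ for all $n<N$, and we set $f(0):=x_0$, $f(a_n):=x_0+\mu a_n$. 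Place the infinite tail by perturbation: one seeks $y_n=x_0+\mu a_n+\delta_n\in E$ with $|\delta_n|\le 2\varepsilon_1\mu a_n$, constructed by a recursion (formally backward from index $\infty$, made rigorous by truncating at $M$, running a finite recursion, and passing to a subsequential limit in the compact box $\prod_n[x_0,x_0+2\mu a_n]$) in which, given $y_{n+1}$, the values of $\delta_n$ compatible with both $|\delta_n|\le 2\varepsilon_1\mu a_n$ and the slope bound $(y_n-y_{n+1})/g_n\in[\mu/2,2\mu]$ fill an interval around $0$ of length $\asymp\varepsilon_1\mu a_n$ lying inside $[x_0,x_0+2\mu a_n]$, where $E^{c}$ has measure $\le 2\varepsilon_1\mu a_n$, so the interval meets $E$. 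Setting $f(a_n):=y_n$ for $n\ge N$, interpolating linearly between consecutive $a_n$ and extending affinely off $[0,a_1]$ yields a bi-Lipschitz $f$ with $f(A)\subseteq E$. The main obstacle here is the bookkeeping at the head–tail junction: lacunarity is used crucially and only in the regime $n\ge N-1$ (it is what makes $g_n$ comparable to $a_n$, hence what keeps the admissible $\delta_n$-interval surrounding $0$ and the slopes between $\mu/2$ and $2\mu$), so $N$, $q'$ and $\varepsilon_1$ must be chosen compatibly, and one must verify that the finitely many terms of the head can be fitted exactly while leaving $x_0$ as a genuine density point available for the tail.
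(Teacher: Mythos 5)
Your proposal is correct, and part (2) is essentially the paper's own argument: place the image of the tail in small windows around $x_0+\mu a_n$ using the density point, interpolate linearly, and read the two-sided slope bounds off the lacunary gap $a_n-a_{n+1}\gtrsim(1-q')a_n$. (Your backward recursion and compactness step for the tail are unnecessary: since $\varepsilon_1$ is small relative to $1-q'$, the admissible window for $\delta_n$ always contains the full box $[-2\varepsilon_1\mu a_n,2\varepsilon_1\mu a_n]$ regardless of $\delta_{n+1}$, so the choices decouple and each $y_n$ can be picked independently --- which is exactly what the paper does by choosing $b_n\in E\cap[\eta a_n,a_n]$. Your exact Steinhaus placement of the head is likewise more than is needed; the paper places the head at arbitrary points of $E$ above the tail, since finitely many slopes are automatically bounded.)

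For part (1) the overall strategy coincides --- a Falconer--Eigen-type Cantor set whose stage-$k$ gap lengths are calibrated to the sequence's rate of sublacunarity so that stage $k$ defeats every bi-Lipschitz constant $L\le k$, with $\sum_k\varepsilon_k<\infty$ keeping the measure positive --- but the mechanism of the contradiction differs. The paper first extracts (Lemma \ref{lem-3.1}) a subsequence whose successive differences are monotone up to a factor $2$, traps the entire tail $(b_m)_{m\ge n_k}$ together with the limit point $b_\infty=f(0)$ inside a single component interval of $E_k$, and contradicts the lower bound $|b_{n_k}-b_\infty|\ge a_{n_k}/L$. You instead work with the finite blocks $B_\rho=A\cap[\rho,2\rho]$: their images have diameter $\gtrsim\rho/L$ but consecutive gaps $\lesssim L\,\eta(2\rho)\rho$, so for $\rho\asymp Ld_k$ the image must straddle a stage-$k$ gap of length $h_k$, which its gaps are too small to jump. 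This bypasses both the subsequence lemma and any use of the limit point, at the price of the elementary block lemma (span of $B_\rho$ at least $c\rho$, gaps at most $2\eta(2\rho)\rho$, valid once $\rho$ is below a threshold determined by when $1-a_{n+1}/a_n$ drops below an absolute constant); your constants do close, since the image-gap bound $2C'L^2\eta(2C'Ld_k)d_k$ is dominated by $\varepsilon_kd_k<h_k$ once $k\ge L$ and the implied constant in $\varepsilon_k\asymp k^2\eta(Ckd_k)$ exceeds $2C'$. Your preliminary observation that bi-Lipschitz maps preserve sublacunarity up to the factor $L^2$ is a nice way to see why (1) and (2) are genuinely dichotomous, though it is not needed in either proof.
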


\subsection{Slow-decreasing sequences.} In this subsection we give a proof of Theorem \ref{thm-decreasing} (1). This furnishes a new proof Falconer and Eigen's Theorem \ref{th:eigen-falconer}. (Yet another proof of Theorem \ref{th:eigen-falconer} is available in \cite{Svetic01}.) We first need a lemma. 

\begin{lemma}\label{lem-3.1}
	Let $(a_n)_{n=1}^\infty$ be a strictly decreasing sequence of positive numbers such that
	$$\lim_{k\to \infty}  a_n=0\quad\mbox{and} \quad \lim_{n\to \infty} \frac{a_{n+1}}{a_n}=1.$$
	 Then there exists a subsequence~$(a_{n_k})_{k=1}^\infty$  such that $\lim_{k\to \infty}a_{n_{k+1}}/a_{n_k}=1$ and
	\begin{equation}\label{eq:dis}
	a_{n_k}- a_{n_{k+1}}\leq 2 (a_{n_m}- a_{n_{m+1}})	\quad\mbox{ for all }\; k,m\in \N \mbox{ with }\; k> m.
	\end{equation}
	\end{lemma}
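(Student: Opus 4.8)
The plan is to build the subsequence greedily, at each stage dropping the height by a carefully chosen \emph{relative} amount that adapts to the local oscillation of $(a_n)$. Write $\epsilon_n:=1-a_{n+1}/a_n\in(0,1)$, so that the hypothesis $a_{n+1}/a_n\to1$ says precisely $\epsilon_n\to0$ and the single‑step gap is $a_n-a_{n+1}=\epsilon_na_n$. First I would fix $n_1$ large enough that $\eta_1:=\sup_{n\ge n_1}\epsilon_n<1$ (possible since $\epsilon_n\to0$). Having chosen $n_1<\dots<n_k$, I set $\eta_k:=\sup_{n\ge n_k}\epsilon_n$ — this is positive because $(a_n)$ is strictly decreasing, and non‑increasing in $k$ — and define
\[
 n_{k+1}:=\min\bigl\{\,n>n_k:\ a_n\le(1-\sqrt{\eta_k})\,a_{n_k}\,\bigr\}.
\]
Since $a_n\to0$ and $(1-\sqrt{\eta_k})a_{n_k}>0$, this set is nonempty, so $n_{k+1}$ is well defined; clearly $n_{k+1}>n_k$, hence $n_k\uparrow\infty$, $\eta_k\downarrow0$, and $a_{n_k}\to0$.

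The core of the argument is a two‑sided estimate on the increments $s_k:=a_{n_k}-a_{n_{k+1}}$. The lower bound $s_k\ge\sqrt{\eta_k}\,a_{n_k}$ is immediate from the defining inequality. For the upper bound, minimality of $n_{k+1}$ gives $a_{n_{k+1}-1}>(1-\sqrt{\eta_k})a_{n_k}$, while $n_{k+1}-1\ge n_k$ gives $a_{n_{k+1}-1}-a_{n_{k+1}}=\epsilon_{n_{k+1}-1}a_{n_{k+1}-1}\le\eta_k a_{n_k}$; subtracting yields $s_k\le(\sqrt{\eta_k}+\eta_k)\,a_{n_k}$. Thus
\[
 \sqrt{\eta_k}\,a_{n_k}\ \le\ s_k\ \le\ (\sqrt{\eta_k}+\eta_k)\,a_{n_k}.
\]
From this, $a_{n_{k+1}}/a_{n_k}=1-s_k/a_{n_k}\in[\,1-\sqrt{\eta_k}-\eta_k,\,1\,]\to1$, giving the first assertion. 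For \eqref{eq:dis}, fix $k>m$: since $\eta$ is non‑increasing and $(a_n)$ decreasing we have $\eta_k\le\eta_m$ and $a_{n_k}\le a_{n_m}$, and since $\eta_k\le\eta_1<1$ we have $\eta_k\le\sqrt{\eta_k}$; therefore
\[
 s_k\ \le\ (\sqrt{\eta_k}+\eta_k)a_{n_k}\ \le\ 2\sqrt{\eta_k}\,a_{n_k}\ \le\ 2\sqrt{\eta_m}\,a_{n_m}\ \le\ 2\,s_m .
\]

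I expect the one genuinely delicate point to be the \emph{choice of the drop rate}. A constant relative drop (say by a factor $1/2$) keeps the increments comparable but forces $a_{n_{k+1}}/a_{n_k}\to1/2$, not $1$; conversely, aiming at a fixed absolute schedule such as $a_{n_k}\approx a_1/k$ fails because a sublacunary sequence may still decay fast enough (e.g.\ like $2^{-(\log n)^2}$) that its single‑step gaps dwarf $a_{n_k}^2$, so one cannot land near the targets. Dropping by the \emph{square root} of the local roughness $\eta_k$ reconciles the two demands: $\sqrt{\eta_k}\to0$ pushes the ratios to $1$, while $\eta_k=o(\sqrt{\eta_k})$ makes the unavoidable overshoot negligible against the intended drop — exactly what drives both the upper estimate on $s_k$ and the factor‑$2$ comparison. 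The remaining verifications (nonemptiness of the defining set, $\eta_k>0$, monotonicity of $\eta_k$, $n_k\to\infty$) are routine bookkeeping.
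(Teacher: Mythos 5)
Your construction is correct: every step checks out, including the slightly delicate boundary case $n_{k+1}-1=n_k$ (where $a_{n_{k+1}-1}>(1-\sqrt{\eta_k})a_{n_k}$ holds trivially), and the quantity $\sqrt{\eta_k}\,a_{n_k}$ is indeed non-increasing in $k$, which is what makes the factor-$2$ comparison go through. The overall architecture is the same as the paper's: a greedy choice of $n_{k+1}$ as the first index past $n_k$ at which the cumulative drop clears a threshold built from tail suprema of one-step gaps, followed by a two-sided estimate $c_k\le s_k\le 2c_k$ with $c_k$ non-increasing. The difference is the threshold itself. The paper uses the \emph{absolute} quantity $t_n=\sup_{p\ge n}(a_p-a_{p+1})$ and stops as soon as $a_{n_k}-a_p\ge t_{n_k}$; the overshoot is a single further step, hence at most $t_{n_k}$ again, and the ratio statement comes from evaluating $t_{n_k}$ at the index where the supremum is attained (which requires a short argument that the sup is attained). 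You instead use the \emph{relative} gaps $\epsilon_n$ and aim for a relative drop of $\sqrt{\eta_k}$; the square root is exactly what makes the overshoot $\eta_k a_{n_k}$ negligible against the target while still sending $s_k/a_{n_k}\to0$. Your variant buys two small simplifications — no attainment-of-supremum argument, and the ratio limit falls out directly from $s_k/a_{n_k}\le\sqrt{\eta_k}+\eta_k$ — at the cost of a less obvious choice of drop rate, which you motivate well.
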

\begin{proof}
For each  $n\in \N$, let
$$
t_n=\sup\{a_p-a_{p+1}:\; p\geq n\}.
$$
Clearly, $(t_n)$ is monotonically decreasing. Since the sequence $(a_n)$ is strictly decreasing with limit $0$, the supremum in the above equality is attainable for each $n$; that is, for each $n$ there exists $p(n)\in \N$ such that $$p(n)\geq n\quad \mbox{ and }\quad t_n=a_{p(n)}-a_{p(n)+1}.$$

Next, we inductively define a subsequence $(n_k)_{k=1}^\infty$ of natural numbers. Set $n_1=1$. Suppose $n_1,\ldots, n_k$ have been defined. Then we define
\begin{equation}
\label{e-e1}
n_{k+1}=\inf\{p\in \N\colon p>n_k,\;  a_{n_k}-a_p\geq t_{n_k}\}.
\end{equation}
Since $a_{n_k}-a_{p(n_k)+1}\geq a_{p(n_k)}-a_{p(n_k)+1}=t_{n_k}$,  it follows from \eqref{e-e1} that
$$n_k<n_{k+1}\leq p(n_k)+1<\infty.$$   Continuing this process, we obtain the sequence $(n_k)_{k=1}^\infty$. By \eqref{e-e1}, $a_{n_k}-a_{n_{k+1}}\geq t_{n_k}$. Moreover, by \eqref{e-e1} and the definition of $t_{n_k}$,
$$
a_{n_k}-a_{n_{k+1}}= (a_{n_{k}}-a_{n_{k+1}-1})+(a_{n_{k+1}-1}-a_{n_{k+1}})\leq t_{n_{k}}+t_{n_{k}}=2t_{n_{k}}.
$$
That is,
\begin{equation}
\label{e-e2}
t_{n_{k}}\leq a_{n_{k}}-a_{n_{k+1}}\leq 2 t_{n_{k}}\quad  \mbox{  for all }k\geq 1.
\end{equation}
It follows that for any $k,m\in \N$ with $k>m$,
$$a_{n_{k}}-a_{n_{k+1}}\leq 2 t_{n_{k}}\leq 2 t_{n_m}\leq 2(a_{n_{m}}-a_{n_{m+1}}).$$

Finally we show that $\lim_{k\to \infty} a_{n_{k+1}}/a_{n_k}=1$, which is equivalent to  $$\lim_{k\to \infty} (a_{n_k}-a_{n_{k+1}})/a_{n_k}=0.$$
Notice that by \eqref{e-e2}, $a_{n_k}-a_{n_{k+1}}\leq 2t_{n_k}=2(a_{p(n_k)}-a_{p(n_k)+1})$,  and
$a_{n_k}\geq a_{p(n_k)}$. Hence
$$
0\leq \frac{a_{n_k}-a_{n_{k+1}}}{a_{n_k}}\leq \frac{2(a_{p(n_k)}-a_{p(n_k)+1})}{a_{p(n_k)}}=2\left(1-\frac{a_{p(n_k)+1}}{a_{p(n_k)}}\right)\to 0,
$$
as desired.
\end{proof}

\begin{proof}[Proof of Theorem \ref{thm-decreasing} (1).]
From Lemma \ref{lem-3.1}, we may assume that 
 \begin{equation}\label{e-e3}
a_n-a_{n+1}\leq 2 (a_m-a_{m+1})\quad  \mbox{ for all } n,m \mbox{ with }n>m.
\end{equation}
Since 
$$
\liminf_{n\to \infty} \frac{a_n-a_{n+1}}{a_n}=1-\limsup_{n\to \infty} \frac{a_{n+1}}{a_n}=0,
$$   we can choose a  strictly increasing sequence $(n_k)_{k=1}^\infty$ of natural numbers such that
$$
\frac{a_{n_k}-a_{n_{k}+1}}{a_{n_k}}\leq k^{-2}4^{-k}\quad \mbox{ for }\;k\geq 1.
$$
Equivalently, we have 
$$
\left(\frac{k}{a_{n_k}}\right)\cdot k (a_{n_k}-a_{n_{k}+1}) <\frac{1}{4^k}\quad \mbox{ for }\;k\geq 1.
$$
Let $\delta_k=k(a_{n_k}-a_{n_k+1})$. We will create a Cantor set whose $k^{th}$ level gaps are of length $\delta_k$ and the gaps are at the position around $j\cdot \frac{k}{a_{n_k}}$ for $j = 0,1,2,\ldots$. To do this, for each $k\geq 1$, let $\ell_k$ be the smallest integer larger than or equal to $ \frac{k}{a_{n_k}}$.   Then  it holds that
$$
\frac{1}{ \ell_k}\leq \frac{a_{n_k}}{k}<\frac{2}{\ell_k}
$$
so that 
\begin{equation}
\label{e-e4}
\ell_k\delta_k\leq \frac{2k}{a_{n_k}}\cdot k(a_{n_k}-a_{n_k+1})\leq  2\cdot 4^{-k}\quad \mbox{ for }k\geq 1.
\end{equation}
Define a sequence $(E_k)_{k=1}^\infty$ of compact subsets of $[0,1]$ by
$$
E_k=[0,1]\setminus \bigcup_{j=0}^{\ell_k}\left(\frac{j}{\ell_k}-\frac{\delta_k}{2},\; \frac{j}{\ell_k}+\frac{\delta_k}{2}\right).
$$
For each $k\in\N$,  $E_k$ is the union of $\ell_k$ disjoint intervals of length $(1-\delta_k\ell_k)/\ell_k$, with a gap of length $\delta_k$ between any two adjacent intervals.
Define $$E=\bigcap_{k=1}^\infty E_k.$$
Then $E$ is a compact set with positive Lebesgue measure. Indeed, using \eqref{e-e4}, 
$$
\mathcal L (E)\geq 1-\sum_{k=1}^\infty\mathcal L\left([0,1]\setminus E_k]\right)\geq 1-\sum_{k=1}^\infty  \ell_k\delta_k\geq 1-\sum_{k=1}^\infty 2\cdot 4^{-k}=\frac13>0.
$$

\medskip

Below we show by contradiction that $(a_n)_{n=1}^\infty$ cannot be embedded into $E$ by a bi-Lipschitz map. Suppose on the contrary that $(a_n)_{n=1}^\infty$ can be embedded into~$E$ by a bi-Lipschitz map~$f:\R\to \R$. Let $b_n=f(a_n)$ for $n\geq 1$ and $b_\infty=\lim_{n\to \infty} b_n$. Then $b_n,\; b_\infty\in E$.  Clearly, $b_\infty=f(0)$, and $(b_n)_{n=1}^\infty$ is strictly monotone increasing or monotone decreasing. Since $f$ is bi-Lipschitz,  there exists a constant $L>1$ such that
\begin{equation*}
\label{e-e5'}
L^{-1} \le \frac{|b_n-b_m|}{a_n-a_m}\le L \quad \mbox{ for all } n, m\in \N, n\neq m.
\end{equation*}
 In particular, this implies that
\begin{equation}\label{e-e5}
	L^{-1} \le \frac{|b_n-b_{n+1}|}{a_{n}-a_{n+1}} \le L	 \quad\text{and}\quad L^{-1} \le \frac{|b_n-b_\infty|}{a_n} \le L.
	\end{equation}
Now fix an integer $k>2L$.
 Then by (\ref{e-e5}) and \eqref{e-e3},  for all $m\ge n_k$,
\begin{equation}
\label{e-e6}
|b_m-b_{m+1}| \le  L (a_m-a_{m+1}) \leq 2L (a_{n_k}-a_{n_k+1})<k (a_{n_k}-a_{n_k+1})=\delta_k.
\end{equation}
As $E_k$ is the union of $\ell_k$ disjoint intervals of length $(1-\delta_k\ell_k)/\ell_k$, with a gap of length $\delta_k$ between any two adjacent intervals,  the sequence $(b_m)_{m=n_k}^\infty$ must be entirely contained in a component interval of $E_k$. This forces that $|b_m-b_\infty|\leq (1-\delta_k\ell_k)/\ell_k$,  Meanwhile, by \eqref{e-e5},
\begin{equation}
\label{e-e7}
|b_{n_k}-b_\infty|\geq \frac{a_{n_k}}{L}> \frac{a_{n_k}}{k}\geq \frac{1}{\ell_k}.
\end{equation}	
This is our desired contradiction and completes the proof. 
\end{proof}

\subsection{Fast-decreasing sequences.} In what follows, we give a proof of Theorem \ref{thm-decreasing} (2), which concerns fast-decreasing sequences. 

\begin{proof}[Proof of Theorem \ref{thm-decreasing} (2)]
Let $E$ be a measurable set of positive Lebesgue measure and $x$ be a Lebesgue density point of $E$. By considering the translation $E-x$, we can, without loss of generality, assume that $x = 0$. By our assumption on $(a_n)_{n=1}^{\infty}$, we can find a $\delta>0$  such that for all $n\ge 1$,
\begin{equation}\label{eq_bound1}
\frac{a_{n+1}}{a_n}<\delta<1.
\end{equation}
Let $\eta$ be a real number such that $\delta<\eta<1$. Define  the interval $I_{n} = \left[\eta a_n,a_n\right]$. The assumptions imply that $I_{n}$ are all disjoint since $a_{n+1}<\delta a_n<\eta a_n$. From the Lebesgue density theorem, 
$$
\frac{m(E^{c}\cap I_n)}{m(I_n)} \le \frac{1}{1-\eta}\cdot \frac{m(E^c\cap [0,a_n])}{a_n} \to 0 \ \mbox{as} \ n\to\infty.
$$
 Hence, we can find $n_0\in\N$ such that when $n\ge n_0$, $E\cap I_n\ne\varnothing.$ We can take $b_n\in E\cap I_n$ and define the map $f(a_n) = b_n$ when $n\ge n_0$. When $n<n_0$, we take $f(a_n) = a'_n$  where $a_n'\in E$ such that $a_n'>a_{n_0}$ and $a_{n}'$ are decreasing. We can extend by linearity between points $(a_{n+1},b_{n+1})$ and $(a_{n},b_n)$ to define a piecewise linear increasing map on $\R$. $f$ is now an increasing function. The map is bi-Lipschitz if and only if there exists a uniform bound in $n$ for all the slopes $\frac{b_n-b_{n+1}}{a_n-a_{n+1}}$.  However, by the fact that $b_n\in I_n$ and (\ref{eq_bound1}), we have
 $$
 \frac{b_n-b_{n+1}}{a_n-a_{n+1}}\le \frac{a_n-\eta a_{n+1}}{a_n-a_{n+1}} \le \max_{x\in [0,\delta]} \frac{1-\eta x}{1-x} = \frac{1-\eta\delta}{1-\delta},
 $$
 and 
 $$
  \frac{b_n-b_{n+1}}{a_n-a_{n+1}}\ge \frac{\eta a_{n}-a_{n+1}}{a_n-a_{n+1}}\ge \min_{x\in[0,\delta]} \frac{\eta -x}{1-x} = \frac{\eta-\delta}{1-\delta},
$$
where we find the maximum and minimum using standard calculus. As $\delta<\eta<1$, the maximum and minimum in the above are all bounded away from $\infty$ and $0$, so this completes the proof.
\end{proof}

With slightly more  work, using the fact that $\eta$ can be chosen as close to 1 as possible,  we can also choose a bi-Lipschitz map $f$ so that $f'(0)=1$, meaning that the map becomes close to an affine map at the limit point.  

Another observation is that we can embed the sequence at every Lebesgue density point of the set $E$. It is well-known that this true for affine copies of \textit{finite} sets. For completeness, we prove this below. Note that this provides another proof of the classical observation of Steinhaus mentioned in the introduction.

\begin{proposition}
    Let $A$ be a finite set and let $E\subset \R$ be a measurable set of positive Lebesgue measure. Then for each Lebesgue density point $x\in E$, there exists $\delta>0$ such that $x+\delta A\subset E.$
\end{proposition}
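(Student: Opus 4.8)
The plan is to translate so that the given density point sits at the origin, and then to argue that the set of dilation factors $\delta$ that \emph{fail} is too small to cover a neighbourhood of $0$. Replacing $E$ by $E-x$, we may assume $x=0\in E$. Write $A=\{a_1,\dots,a_k\}$. If $A=\{0\}$ the claim is trivial since $0\in E$, so assume $A$ has a nonzero element and set $M=\max_i|a_i|>0$ and $c=\min\{|a_i|:a_i\ne 0\}>0$. For each index $i$ with $a_i\ne 0$, let
\[
B_i=\{\delta\in(0,1]:\ \delta a_i\in E^c\}.
\]
Any $\delta\in(0,1]\setminus\bigcup_{i:a_i\ne0} B_i$ satisfies $\delta a_i\in E$ for every nonzero $a_i$, and it also satisfies $\delta\cdot 0=0\in E$; hence such a $\delta$ yields $\delta A\subset E$. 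So it suffices to produce $\varepsilon>0$ with $m\big(\bigcup_i B_i\cap(0,\varepsilon)\big)<\varepsilon$.

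First I would estimate each $m(B_i\cap(0,\varepsilon))$ by a linear change of variables. The map $\delta\mapsto\delta a_i$ scales one-dimensional Lebesgue measure by the factor $|a_i|$ and carries $(0,\varepsilon)$ onto an interval $I_i$ which is contained in $[-\varepsilon M,\varepsilon M]$. Since $B_i\cap(0,\varepsilon)$ is precisely the preimage of $E^c\cap I_i$ under this map,
\[
m\big(B_i\cap(0,\varepsilon)\big)=\frac{1}{|a_i|}\,m\big(E^c\cap I_i\big)\le \frac{1}{c}\,m\big(E^c\cap[-\varepsilon M,\varepsilon M]\big).
\]

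Next I would invoke the hypothesis that $0$ is a Lebesgue density point of $E$, which says exactly that $m(E^c\cap[-r,r])/(2r)\to 0$ as $r\to 0^+$. Taking $r=\varepsilon M$, for all sufficiently small $\varepsilon>0$ we obtain $m\big(E^c\cap[-\varepsilon M,\varepsilon M]\big)\le c\varepsilon/(2k)$, and therefore
\[
m\Big(\bigcup_{i:a_i\ne0} B_i\cap(0,\varepsilon)\Big)\le k\cdot\frac{1}{c}\cdot\frac{c\varepsilon}{2k}=\frac{\varepsilon}{2}<\varepsilon.
\]
Hence $\bigcup_i B_i$ cannot cover $(0,\varepsilon)$; choosing $\delta$ in the (nonempty) complement and translating back gives $x+\delta A\subset E$.

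There is no serious obstacle here: the only point that needs care is that the change-of-variables constants $|a_i|^{-1}$ stay bounded, which holds because $A$ is finite and its nonzero elements are bounded away from $0$, so that the $o(r)$ decay of $m(E^c\cap[-r,r])$ furnished by the density point genuinely dominates the finite sum. This is exactly the place where finiteness of $A$ is essential, and the fact that this estimate breaks down for infinite $A$ is the entire content of the Erd\H{o}s similarity problem.
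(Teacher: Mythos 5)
Your proof is correct and rests on the same two ingredients as the paper's: the linear scaling of Lebesgue measure under $\delta\mapsto\delta a_i$, and the fact that the density point makes $m(E^c\cap[-r,r])=o(r)$, so the set of ``bad'' dilations $\delta$ cannot exhaust a small interval $(0,\varepsilon)$. The only difference is packaging --- you use a direct union bound over the $B_i$ where the paper pigeonholes a single index $i_0$ with $m(E_{i_0})\ge T/m$ and derives a measure contradiction --- and your explicit handling of $0\in A$ and of negative elements via $c=\min\{|a_i|:a_i\neq 0\}$ is a welcome touch of extra care.
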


\begin{proof}
    Let $A =\{a_1<a_2<\cdots< a_m\}$. Without loss of generality, we can assume that $A\subset (0,1]$ with $a_1 = \min A>0$. Moreover, we can assume that $x=0$ is the Lebesgue density point, since otherwise we can consider $E-x$. By the Lebesgue density theorem, whenever $\varepsilon < \frac{a_1}{m}$, we can find $T>0$ such that 
    $$
    m (E\cap [0,t]) \ge (1-\varepsilon)t,
    $$
    whenever $0<t\le T$. 
    Suppose that $\delta A$ is not a subset of $E$ for all $\delta>0$. Then we let 
    $$
    E_i = \{\delta\in (0,T]: \delta a_i\not\in E\} 
    $$
 and we have $\bigcup_{i=1}^m E_i = (0,T]$. Taking Lebesgue measure, there exists $i_0$ such that $m(E_{i_0})\ge T/m$. By our construction, $E\cap a_{i_0} E_{i_0} = \emptyset$, where we write $xE = \{xy: y\in E\}$, and $a_{i_0}E_{i_0}\subset [0,T]$ (since $A\subset[0,1]$). Hence, 
$$
[0,T] \supset (E\cap [0,T])\cup a_{i_0} E_{i_0}
 $$   
and the union is disjoint. Taking Lebesgue measure, we have 
 $$
 T\ge (1-\varepsilon)T + a_{1}\frac{T}{m} = \left(1+\frac{a_1}{m}-\varepsilon\right) T>T
 $$
 by our choice of $\varepsilon$, which is a contradiction. Thus, there must be an affine copy of $A$ at the point $0$.
\end{proof}

For a given infinite set $A$ we define the set of all {\bf Erd\H{o}s points} of a compact set $K$ of positive Lebesgue measure to be 
$$
{\mathcal E}_K = \{x\in K:  (x+tA)\cap K^c\ne \emptyset, \ \forall t\ne 0\}.
$$
The Erd\H{o}s similarity conjecture can be proved if we can show that ${\mathcal E}_K = K$. Indeed, we can also prove the conjecture if we can show that $m ({\mathcal E}_K) >0$ for some $K$. In \cite{CruzLaiPramanik23}, A. Cruz, the second-named author, and M. Pramanik show that for $A = (2^{-n})_{n=1}^{\infty}$, there exists a compact set $K$ of positive measure such that ${\mathcal E}_K $ has Hausdorff dimension 1. 

\section{The Erd\H{o}s similarity problem for uncountable sets}\label{Section 3}
It is clear that proving that all countable sets are not measure universal would answer the Erd\H{o}s similarity conjecture in the affirmative. Given the difficulty of this task, it is natural to ask if all uncountable sets are not measure universal. In particular, it was asked in \cite{GallagherLaiWeber23} if all \textbf{Cantor sets}---i.e. compact, totally disconnected, and perfect subsets of $\R$---are measure universal.
\begin{conjecture}
\label{cantor-set}
    Cantor sets in $\R$ are not measure universal.
\end{conjecture}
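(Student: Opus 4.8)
Since Conjecture~\ref{cantor-set} is currently open, what follows is a program for attacking it rather than a proof. The natural plan is to argue separately according to two affine invariants of a Cantor set $C\subset\R$: its Newhouse thickness $\tau(C)$ and its Hausdorff dimension $\dim_H C$. The backbone of the program is the reformulation of Section~\ref{Section 3}, which recasts measure universality of $C$ as a \emph{stable sumset} statement: were $C$ measure universal, then for every measurable $E$ with $m(E)>0$ the parameter set $\{(\lambda,t):\lambda C+t\subseteq E\}$ would be nonempty, and in fact persistently so under small perturbations of $\lambda$ and $t$. Against this one deploys the classical device behind the Eigen--Falconer Theorem~\ref{th:eigen-falconer} and Theorem~\ref{thm-decreasing}(1): build an avoiding set as $[0,1]$ minus a rapidly shrinking ``comb'' of evenly spaced short intervals at a sequence of scales $\delta_k\to0$, arranged so that any affine copy of $C$ sitting inside $[0,1]$ is forced, at all sufficiently fine scales, to span a full period of the comb and hence to straddle a deleted interval.

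For the two positive-invariant regimes the geometry supplies the obstruction. When $\tau(C)>0$, every copy $\lambda C+t$ has the same Newhouse thickness, and once $\lambda C+t$ is long enough to span a period of the scale-$\delta_k$ comb its convex hull contains a deleted interval $G$; the Newhouse gap lemma then prevents $G$ from sitting inside a gap of $\lambda C+t$ at every scale, so $\lambda C+t\not\subseteq E$. This is the content of the positive-thickness case treated in Section~\ref{Section 3}. When $\dim_H C=s>0$, one replaces thickness by a Frostman measure $\mu$ on $C$ with $\mu(B(x,r))\lesssim r^s$; transporting $\mu$ by the affine maps bounds the mass of $\lambda C+t$ in any window of length $\delta_k$, so a pigeonhole over the $\asymp 1/\delta_k$ deleted positions at scale $\delta_k$ forces $\lambda C+t$ to meet a deleted interval at infinitely many scales. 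A technical caveat, already visible in Section~\ref{Section 3}, is that as stated these arguments rule out \emph{full} measure universality, which is slightly stronger than measure universality; upgrading them to genuine non-universality---by running the estimate against the full measurable set $E$ rather than a Cantor subset of it---is the first step I would try to push through. I would also observe that Kolountzakis's criterion, that a set containing arbitrarily long ``chunks'' of slow-decaying sequences is not measure universal, already disposes of every Cantor set whose defining gap ratios are sublacunary along a subsequence, so the truly resistant sets are those that are simultaneously thin in every available sense.

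The genuinely hard case is $\tau(C)=0$ together with $\dim_H C=0$. Here neither the gap lemma nor a Frostman bound gives any lower bound on how much of an interval a copy of $C$ must occupy, and the deterministic comb constructions collapse. The only presently viable route is probabilistic, in the spirit of Kolountzakis~\cite{Kolountzakis97,Kol-cantor-set}: at scale $2^{-k}$ retain each basic dyadic interval independently with a carefully tuned probability $p_k$, obtaining a random set $E$ of positive expected measure, and then show---after discretizing $\lambda$ and $t$ and applying a union bound---that almost surely no affine copy of $C$ is contained in $E$. The main obstacle is doing this for an \emph{arbitrary} dimension-zero, thickness-zero Cantor set: such a set may be extraordinarily lacunary, with gap ratios that are not regularly varying, so one loses the uniform count of how many essentially distinct copies of $C$ meet a window of a given size that a second-moment argument needs, and one must also beat the supremum over the scaling $\lambda$. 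I expect a successful attack to require a choice of the probabilities $p_k$ tailored to the actual sequence of gap ratios of the given $C$---rather than to a single decay exponent---combined with an entropy-decrease or decoupling step of the kind underlying Bourgain's Theorem~\ref{th:bourgain}. Absent such an input, Conjecture~\ref{cantor-set} remains open; this worst-case, highly lacunary regime is the natural fractal counterpart of the exponentially decaying sequences such as $(2^{-n})_{n=1}^\infty$ that continue to form the central bottleneck of the original problem.
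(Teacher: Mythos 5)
You have correctly recognized that this statement is a conjecture: the paper offers no proof of it, and neither do you, so the honest verdict is that your text is a research program rather than a proof, and the statement remains open. Within that program, however, two points need correction. First, your ``technical caveat'' reverses an implication: since every full-measure set has positive measure, measure universality \emph{implies} full measure universality, so showing a Cantor set is not full measure universal already shows it is not measure universal. This is exactly how Section \ref{Section 3} concludes (``not full measure universal, and thus not measure universal''), so the ``upgrade'' you propose as a first step is unnecessary. Second, your sketches of the two positive-invariant cases do not match the arguments actually used. In the positive-thickness case the paper does not delete a comb of evenly spaced intervals from $[0,1]$; instead, via Proposition \ref{sumset}, it suffices to exhibit a single Lebesgue-null set meeting every nontrivial affine copy of $X$, and the paper takes $M=\bigcup_{(n,l)\in\Z\times\Z}2^n(K+l)$ for a measure-zero Cantor set $K$ of large thickness, applying the Newhouse gap lemma (Lemma \ref{newhouse-gap-lemma}) to $\lambda X+t$ and a suitable $2^n(K+l)$. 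In the positive-dimension case, a Frostman bound plus pigeonhole does not work as you describe: an upper bound $\mu(B(x,r))\lesssim r^s$ controls how much mass a window can carry but does not force a copy of $C$ to hit a deleted interval. The actual argument runs through Shmerkin--Suomala spatially independent martingales (Theorem \ref{shmerkin}) to produce a null set $A$ with $(A+TX)^\circ\neq\varnothing$ for all affine $T$, which again feeds into Proposition \ref{sumset}.

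Your identification of the genuinely resistant regime --- Cantor sets with Newhouse thickness zero and Hausdorff dimension zero simultaneously --- and of the probabilistic route of Kolountzakis as the most plausible line of attack is accurate and consistent with the paper's own discussion of \cite{Kol-cantor-set}. But as it stands, nothing in your proposal closes that case, so the conjecture is not proved.
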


While the full answer to this question remains unknown, we will see in this section that positive thickness \cite{GallagherLaiWeber23} and positive dimension \cite{martingales} of a Cantor set are sufficient to show that it is not measure universal. We will also see that Cantor sets are not ``topologically universal''. This is a topological analogue of the measure universality first introduced in \cite{GallagherLaiWeber23} (see also \cite{jung2024interiorcertainsumscontinuous, jung2024topologicalerdhossimilarityconjecture} for more general results). These results support the idea that restricting our attention to Cantor sets in the Erd\H{o}s similarity conjecture is a non-trivial and interesting problem.

\subsection{Universality and sumsets}
We can actually prove a stronger property than measure non-universality. Following \cite{jung2024topologicalerdhossimilarityconjecture}, we introduce the following definition:
a set $X\subset \R$ is said to be \textbf{{full measure universal}} if for every Lebesgue measurable set $F$ with full measure, that is, $m(\R\setminus F)=0$, there exists $\lambda \in \R\setminus \{0\}$ and $t\in \R$ such that $\lambda X + t \subset F$.

Note that if $X$ is not full measure universal, then $X$ is not measure universal. The following proposition shows an equivalence between the Erd\H{o}s problem and a sumset problem. A version of this sumset equivalence was previously discovered in \cite{Jas-95}.


\begin{proposition}
\label{sumset}
Let $X$ be a subset of $\R$ and $\mathcal{N}$ denote the collection of all Lebesgue measure zero sets in $\R$. Then the following are equivalent.\\
(i) $X$ is full measure universal.\\
(ii) For all $M\in \mathcal{N}$, there exists $\lambda \in \R\setminus \{0\}$ and $t\in \R$ such that $(\lambda X + t) \cap M = \varnothing$.\\
(iii) For all $M\in \mathcal{N}$, there exists $\lambda \in \R\setminus \{0\}$ such that $X+\lambda M \neq \R$.
\end{proposition}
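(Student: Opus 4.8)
The plan is to show the chain of implications (i) $\Rightarrow$ (ii) $\Rightarrow$ (iii) $\Rightarrow$ (i), exploiting the fact that the complement of a full-measure set is a null set and vice versa. For (i) $\Rightarrow$ (ii): given $M \in \mathcal{N}$, set $F = \R \setminus M$, which has full measure. Full measure universality gives $\lambda \neq 0$ and $t$ with $\lambda X + t \subset F = \R \setminus M$, which is exactly the assertion $(\lambda X + t) \cap M = \varnothing$. The reverse direction (ii) $\Rightarrow$ (i) is the same observation run backwards: any full-measure $F$ has null complement $M = \R \setminus F$, and a translate-dilate of $X$ missing $M$ lies inside $F$. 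So (i) $\Leftrightarrow$ (ii) is essentially a restatement.

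The substantive equivalence is (ii) $\Leftrightarrow$ (iii). Here the key algebraic identity is that, for $\lambda \neq 0$,
\[
(\lambda X + t) \cap M = \varnothing
\quad\Longleftrightarrow\quad
t \notin M - \lambda X = -(\lambda X - M),
\]
so that such a $t$ exists if and only if $M - \lambda X \neq \R$, equivalently $\lambda X - M \neq \R$. Since $M$ is null if and only if $-M$ is null, replacing $M$ by $-M$ shows that condition (ii) (for all null $M$ there is $\lambda \neq 0$ with $\lambda X + t$ avoiding $M$ for some $t$) is equivalent to: for all null $M$ there is $\lambda \neq 0$ with $X + \lambda M \neq \R$. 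One should be a little careful about the placement of the scalar: in (iii) the dilation is applied to $M$ rather than to $X$, so I would note that $\lambda X - M \neq \R$ is equivalent to $X - \lambda^{-1} M \neq \R$ (divide through by $\lambda$), and since $\lambda^{-1} M$ ranges over exactly the same family of null sets as $\lambda M$ when $\lambda$ ranges over $\R \setminus \{0\}$, the quantifier ``there exists $\lambda \neq 0$'' is unaffected by this cosmetic change. Then absorbing the sign into $M$ (again using that $M \mapsto -M$ preserves $\mathcal{N}$) converts $X - \lambda^{-1}M$ into $X + \lambda M$.

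The main point to get right — and the only place where there is anything to check beyond bookkeeping — is the interplay of the two quantifiers ``for all $M$'' and ``there exists $\lambda$'' under the substitutions $M \rightsquigarrow -M$ and $M \rightsquigarrow \lambda^{-1} M$. The substitution $M \rightsquigarrow -M$ is harmless because it is a fixed involution on $\mathcal{N}$, so the universal quantifier is preserved. The substitution $M \rightsquigarrow \lambda^{-1}M$ is more delicate since $\lambda$ is itself the existentially quantified variable; the clean way to handle it is to observe that the statement ``$\exists \lambda \neq 0 : X + \lambda M \neq \R$'' depends on $M$ only through the set $\{\lambda M : \lambda \neq 0\}$ of all nonzero dilates of $M$, and this family is invariant under $M \rightsquigarrow cM$ for any fixed $c \neq 0$ as well as under $M \rightsquigarrow -M$. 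I expect no genuine obstacle here — the whole proposition is a matter of carefully tracking how affine copies of $X$ relate to their ``forbidden'' complements — but the bookkeeping with signs and reciprocals is the part most prone to a slip, so I would write those substitutions out explicitly rather than leaving them to the reader.
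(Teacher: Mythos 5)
Your proposal is correct and follows essentially the same route as the paper: the complement duality for (i) $\Leftrightarrow$ (ii), and the observation that a suitable translate $t$ exists precisely when $M-\lambda X\neq\R$, followed by rescaling and a sign change to move the dilation onto $M$. The paper organizes this as a cycle (i) $\Rightarrow$ (ii) $\Rightarrow$ (iii) $\Rightarrow$ (i) with element-by-element bookkeeping, but the underlying algebra and the quantifier-tracking you flag (that $-\lambda^{-1}$ ranges over $\R\setminus\{0\}$ as $\lambda$ does, and that $M\mapsto -M$ preserves $\mathcal{N}$) are identical.
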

\begin{proof}
We first prove (i) $\Rightarrow$ (ii). Suppose $X$ is full measure universal and let $M$ be a measure zero set in $\R$. Then, since $X$ is full measure universal, there exists $\lambda \in \R\setminus \{0\}$ and $t\in \R$ such that $\lambda X + t \subset M^c$. Since $\lambda X + t \subset M^c$ if and only if $\lambda X + t \cap M =\varnothing$, (ii) is true.

Secondly, we prove (ii) $\Rightarrow$ (iii). Suppose (ii) is true and let $M$ be a measure zero set in $\R$. Then, there exists $\lambda' \in \R\setminus\{0\}$ such that $(\lambda' X + t)\cap M =\varnothing$. This implies that for all $x\in X$ and $y \in M$, $\lambda' x + t \neq y$. Thus, for all $x\in X$ and $y\in M$, $\lambda' x - y \neq -t$, i.e., $x-\frac{1}{\lambda'} y \neq -\frac{t}{\lambda'}$. Hence, $X-\frac{1}{\lambda'}M \not\ni -\frac{t}{\lambda'}$. Taking $\lambda = -\frac{1}{\lambda'}$, we see that $X+\lambda M \neq \R$. Thus, (iii) is true.

Lastly, we prove (iii) $\Rightarrow$ (i). Suppose (iii) is true and let $F$ be a set of full measure, i.e., $M=\R\setminus F$ is a measure zero set. Then, since $-M$ is also a measure zero set, there exists $\lambda' \in \R\setminus \{0\}$ such that $X+\lambda' (-M) \neq \R$.  This implies that there exists $t'\in \R$ such that $t'\not\in X+\lambda' (-M)$. Hence, for all $x\in X$ and $y \in M$, $t' \neq x - \lambda' y$, i.e., $\frac{1}{\lambda'} x - \frac{t'}{\lambda'} \neq y$. Thus, letting $\lambda = \frac{1}{\lambda'}$ and $t=-\frac{t'}{\lambda'}$, we see that $(\lambda X + t) \cap M=\varnothing$, or equivalently, $\lambda X + t \subset M^c=F$. Thus, $X$ is full measure universal.
\end{proof}

It is now clear that {\it all countable sets are full measure universal} because if $X$ is countable, then $X+\lambda M$ must have measure zero for all $M\in{\mathcal N}$. Hence, Proposition \ref{sumset} (iii) is satisfied.

In light of Proposition \ref{sumset}, understanding the measure universality of Cantor sets is equivalent to understanding sumsets of the form $X+\lambda M$, where $X$ is a Cantor set and $M$ a measure zero set. In \cite{erdos-kunen-mauldin}, P. Erd\H{o}s, K. Kunen, and R. Mauldin showed a result of a very similar flavor.

\begin{theorem}[Erd\H{o}s--Kunen--Mauldin]\label{ekm}
    If $X$ is a perfect subset of $\R$, then there exists a Lebesgue measure zero set $M\subset \R$ such that $X+M=\R$. 
\end{theorem}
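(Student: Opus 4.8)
The plan is to exploit the fact that a perfect set $X$ is ``large'' in a combinatorial sense: it contains a homeomorphic copy of the Cantor space, and in particular has cardinality continuum with plenty of room to code arbitrary real numbers as differences of its elements. The goal is to build a measure zero set $M$ so that every $r \in \R$ can be written as $r = x - m$ with $x \in X$ and $m \in M$, i.e. $r \in X - M$ (after replacing $M$ by $-M$, which is still null, we get $X + M = \R$). The natural strategy is a transfinite recursion of length continuum: enumerate $\R = \{r_\alpha : \alpha < \mathfrak{c}\}$, and at stage $\alpha$ choose a point $x_\alpha \in X$ and set $m_\alpha = x_\alpha - r_\alpha$, ensuring that the set $M = \{m_\alpha : \alpha < \mathfrak{c}\}$ we build up stays null.

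The key device making the nullity achievable is to shrink the target inside $X$ as we go. First I would fix, inside the perfect set $X$, a decreasing sequence of compact subsets, or better, observe that $X$ contains a Cantor-like subset $C$ that is itself Lebesgue null (every perfect set contains a perfect null subset — split off a Cantor subset with small ratios). Then I would build a single fixed null $F_\sigma$ or $G_\delta$ ``scaffold'' set $N$ with the property that for every real $r$, the translate $r + N$ still meets $C$ — or even more cheaply, use the following: let $\{q_n\}$ enumerate a countable dense set and let $M_0$ be a null set such that $C \subseteq q_n + M_0$ is arranged to have large intersection; this is getting complicated, so the cleaner route is the recursion with a bookkeeping constraint. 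Concretely: build $M = \bigcup_n M_n$ where each $M_n$ is contained in an open set $U_n$ of measure $< 2^{-n}$; at step $\alpha$, when we must place $m_\alpha = x_\alpha - r_\alpha$ into some $M_n$, we use the freedom in choosing $x_\alpha \in X$ (a whole perfect set's worth of choices) to land $m_\alpha$ inside the already-prescribed open set $U_{n(\alpha)}$. This works because $x_\alpha - r_\alpha \in U_n \iff x_\alpha \in U_n + r_\alpha$, and $U_n + r_\alpha$ is a fixed open set of positive measure, which — since $X$ is perfect, hence uncountable and not contained in any meager-or-null complement issue — meets $X$; in fact one arranges the $U_n$ to be dense open so that $U_n + r_\alpha$ meets every nonempty relatively open piece of $X$, giving continuum-many valid choices of $x_\alpha$ and letting the recursion run without collision.

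The step I expect to be the main obstacle is precisely \emph{making the recursion well-founded while keeping $M$ null}: one must guarantee that at each of the $\mathfrak{c}$-many stages there is still a legal choice of $x_\alpha \in X$ producing $m_\alpha$ in the allotted small open set, and that the bookkeeping (which $r_\alpha$ gets assigned to which $U_n$) is set up so the $U_n$'s, chosen in advance with $\sum m(U_n) < \infty$, are dense open — then $X \cap (U_n + r)$ is nonempty for every $r$ and every $n$ because $U_n + r$ is open dense and $X$ is perfect (no isolated points), so $X$ is not contained in the complement of any dense open set restricted to a neighborhood. Handling the assignment so that \emph{every} real is eventually covered, while each $U_n$ absorbs only the reals routed to it, is the delicate combinatorial part; once the scaffold $\{U_n\}$ and the routing $\R \to \N$ are fixed, the per-stage choice of $x_\alpha$ is immediate and $M \subseteq \bigcup_n U_n$ is null by construction, yielding $X - M = \R$ and hence $X + (-M) = \R$ with $-M$ null.
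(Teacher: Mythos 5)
The step that fails is the one you lean on to keep the recursion alive: it is \emph{not} true that a dense open set $U$ must meet a perfect set $X$ (or its translates). If $X$ is the middle-thirds Cantor set, then $\R\setminus X$ is open and dense, and inside it one can place a union $U$ of tiny intervals around a countable dense subset of $\R$, so that $U$ is dense open, has measure $<\varepsilon$, and satisfies $X\cap U=\varnothing$. Perfect sets in $\R$ are typically nowhere dense, so density of $U$ buys you nothing; what you actually need at stage $\alpha$ is $X\cap(U_{n(\alpha)}+r_\alpha)\neq\varnothing$ for \emph{every} $r_\alpha$, i.e.\ that $U_{n(\alpha)}$ meets every translate of $X$, i.e.\ that $X-U_{n(\alpha)}=\R$. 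Producing a small-measure set with that property is essentially the theorem itself, so the scaffold you want to ``fix in advance'' is precisely the object you are trying to construct, and the transfinite recursion adds nothing: a set of size $\mathfrak{c}$ built by arbitrary choices can only be certified null by exhibiting a null Borel set containing it, and a null Borel set meeting every translate of $X$ already witnesses the conclusion. There is also a measure-accounting slip: with $M_n\subset U_n$ and $m(U_n)<2^{-n}$ you only get $m^*(M)\leq\sum_n 2^{-n}=1$, not $m(M)=0$.

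The way Erd\H{o}s, Kunen, and Mauldin actually prove this is constructive and avoids both problems. One first passes to a Cantor subset $C\subseteq X$ whose construction intervals shrink fast along a prescribed sequence of scales (so that, in a suitable digit expansion, membership in $C$ is governed by the digits in certain blocks of positions). One then defines $M_0$ to be the set of reals whose digits are unrestricted on the complementary blocks and essentially fixed on $C$'s blocks. Fixing digits on infinitely many positions forces $m(M_0)=0$, while the interleaving (after handling carries) gives $C+M_0\supseteq I$ for some nontrivial interval $I$; taking $M=\bigcup_{k\in\Z}(M_0+k\,|I|/2)$, still null, yields $X+M\supseteq C+M=\R$. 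If you want to salvage your outline, the missing ingredient is exactly this explicit null ``scaffold'' adapted to the geometry of $X$; no amount of bookkeeping with generic dense open sets will substitute for it.
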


Theorem \ref{ekm} does not prove Conjecture \ref{cantor-set} since we do not know if the same $M$ in the theorem  also satisfies  $X+\lambda M = \R$ for all $\lambda\ne 1$.  This naturally leads us to following conjecture, which was asked in \cite{jung2024topologicalerdhossimilarityconjecture}.

\begin{conjecture}\label{conjecture_P}
    If $X$ is a perfect set in $\R$, then there exists a set $M$ in $\R$ with Lebesgue measure zero such that $X+\lambda M=\R$ for all $\lambda \in \R\setminus \{0\}$. 
\end{conjecture}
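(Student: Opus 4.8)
\medskip

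\noindent We outline a possible route to Conjecture~\ref{conjecture_P} and where its difficulty concentrates.

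\textbf{Reduction.} If $X$ has nonempty interior it contains an interval $I$, whence $M=\Q$ works: $\lambda\Q$ is dense, so $X+\lambda\Q\supseteq I+\lambda\Q=\R$ for every $\lambda\neq0$, and $m(\Q)=0$. If $X$ has empty interior then $X$ is closed, nowhere dense and perfect, and a routine argument produces a compact, perfect, nowhere dense---hence Cantor---subset $C\subseteq X$, which an affine change of coordinates places inside $[0,1]$. Since a null set $M$ with $C+\lambda M=\R$ for all $\lambda\neq0$ automatically satisfies $X+\lambda M\supseteq C+\lambda M=\R$, and renormalising $C$ only rescales $M$ (keeping it null), it suffices to prove Conjecture~\ref{conjecture_P} for Cantor sets $X\subseteq[0,1]$.

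\textbf{Reformulation and known cases.} Fix such an $X$. By Proposition~\ref{sumset}, the conclusion of Conjecture~\ref{conjecture_P} for $X$ is precisely the negation of condition~(iii) there, hence equivalent to the assertion that $X$ is \emph{not} full measure universal. Thus the conjecture already holds for the Cantor sets shown in Section~\ref{Section 3} not to be full measure universal---those of positive Newhouse thickness \cite{GallagherLaiWeber23} and those of positive Hausdorff dimension \cite{martingales}---and, once the relevant probabilistic constructions are upgraded from ``measure close to $1$'' to full measure, also for the thickness-zero examples of \cite{Kol-cantor-set}. What remains open is exactly the regime in which the original Erd\H{o}s problem is stuck: Cantor sets whose Newhouse thickness \emph{and} Hausdorff dimension both vanish.

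\textbf{Localising the scale.} It is enough to produce a single null set $M_0$ with $X+\theta M_0=\R$ for all $\theta$ satisfying $\tfrac12\le|\theta|\le2$: then $M:=\bigcup_{n\in\Z}2^{-n}M_0$ is a countable union of null sets, and given $\mu\neq0$ one chooses $n\in\Z$ with $\tfrac12\le|\mu|\,2^{-n}\le2$, so that $X+\mu M\supseteq X+(\mu2^{-n})M_0=\R$. Equivalently, one seeks a measure-zero set $M_0$ that meets every copy $cX+b$ of $X$ with $\tfrac12\le|c|\le2$ and $b\in\R$---a null transversal to a two-parameter family of near-isometric copies of $X$. The plan is to build $M_0$ by an Erd\H{o}s--Kunen--Mauldin-style recursive construction (in the spirit of Theorem~\ref{ekm}), arranging enough overlap at every finite stage that the resulting covering of $\R$ by translates of $X$ survives the rescaling $X\mapsto\theta X$, uniformly for $\theta\in[\tfrac12,2]$.

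\textbf{The main obstacle.} An Erd\H{o}s--Kunen--Mauldin-type construction builds $M_0$ from translates of small intervals chosen to dovetail with the interval structure of $X$ along a sequence of scales; rescaling $M_0$ by $\theta$ shifts those scales, and one then needs $X$ to present comparable structure---clusters of points of roughly prescribed size and spacing---at a whole continuum of scales at once. Self-similar Cantor sets, and more broadly those of positive thickness or dimension, enjoy this scale regularity, which is exactly why those cases succeed; a Cantor set that is thin at every scale in an irregular way need not. Put another way, the obvious construction would force sets like $X+\theta X'$ to contain intervals for $\theta$ ranging over an interval and $X'$ over suitable subsets of $X$---a ``slicing'' phenomenon that genuinely fails for sufficiently thin $X$---while the alternative, of producing $M_0$ with no reference to the structure of $X$, is precisely Kolountzakis's probabilistic approach \cite{Kolountzakis97}, whose current estimates also run out exactly in the thickness-zero, dimension-zero regime. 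I therefore expect the decisive step to be a new mechanism---an entropy or counting argument in the spirit of Bourgain's Theorem~\ref{th:bourgain}, or a sharpening of the probabilistic method---that yields a full-measure set containing no affine copy of $X$ without any recourse to self-similarity.
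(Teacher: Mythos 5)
This statement is a conjecture that the paper itself leaves open; there is no proof in the paper to compare against, and your proposal, as you acknowledge, is not a proof either. That said, everything you actually assert is correct and consistent with the paper's own remarks. The reduction to Cantor sets in $[0,1]$ is sound (a perfect set with interior is handled by $M=\Q$; a nowhere dense perfect set contains a Cantor set, and enlarging $X$ only helps). Your reformulation via Proposition~\ref{sumset} is exactly right: the conclusion of Conjecture~\ref{conjecture_P} is the negation of condition~(iii), so the conjecture for a given $X$ is equivalent to $X$ failing to be full measure universal, which is why the positive-thickness and positive-dimension cases of Section~\ref{Section 3} already settle it there. Your scale-localisation step (build $M_0$ working for $\tfrac12\le|\theta|\le2$, then take $\bigcup_n 2^{-n}M_0$) is the same dyadic trick the paper uses in both the thickness and dimension arguments, and your caveat that the thickness-zero examples of \cite{Kol-cantor-set} are only known to be non--measure-universal (not non--\emph{full}-measure-universal) is a careful and accurate distinction.

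The genuine gap is the one you name yourself: the construction of a single null set $M_0$ meeting every copy $cX+b$ with $\tfrac12\le|c|\le2$ when $X$ has Newhouse thickness zero and Hausdorff dimension zero. The Erd\H{o}s--Kunen--Mauldin construction (Theorem~\ref{ekm}) produces an $M$ adapted to one fixed scale structure of $X$ and gives no control under dilation, which is precisely why the paper states it does not resolve Conjecture~\ref{cantor-set}; and the gap-lemma and spatially-independent-martingale mechanisms both consume positive thickness or positive dimension in an essential way. So your outline correctly maps the terrain and matches the paper's framing, but the decisive new idea for thin, dimension-zero Cantor sets is still missing.
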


Conjecture \ref{conjecture_P} implies that every Cantor set is not full measure universal, which then implies that every Cantor set is not measure universal (Conjecture \ref{cantor-set}). Indeed, it is sufficient to find a measure zero set $M$ such that $(X+\lambda M)^\circ \neq \varnothing$ for all $\lambda \neq 0$, since we can take a countable union of translated copies of $M$ and obtain another measure zero set whose algebraic sum with $X$ is $\R$.

\subsection{Cantor sets with positive Newhouse thickness}
In this subsection, we show that if a Cantor set is ``thick'', then it is too big to be universal. To make this precise, we introduce the notion of Newhouse thickness and the Newhouse gap lemma (Lemma \ref{newhouse-gap-lemma}).

We let $\Sigma^0 = \{\emptyset\}$, $\Sigma^n=\{0,1\}^n$ and $\Sigma^{\ast} = \bigcup_{n=1}^{\infty} \Sigma^n$. We will naturally concatenate $\sigma$ and $\sigma'$ in $\Sigma'$ and denote it by $\sigma\sigma'$.  We say that $\sigma$ is a {\bf descendant} of $\sigma'$ (or $\sigma'$ is an {\bf ancestor} of $\sigma$) if $\sigma = \sigma' \widetilde{\sigma}$ for some $\widetilde{\sigma}$. If $\widetilde{\sigma}$ has length 1, $\sigma$ is a called {\bf child} of $\sigma'$.   

We describe a Cantor set in $\R$  as a binary Cantor set. Let $K$ be a Cantor set and let $I_{\emptyset}(K)$ be the convex hull of $K$. Then the complement of the $K$ in $I_{\varnothing}(K)$  is a countable union of disjoint bounded open intervals, which we call {\bf (bounded) gaps}. We first write them as $G_n$, $n=1,2,\ldots$ with $|G_1|\ge |G_2|\ge \ldots$ and if they are of the same length, we will enumerate them from the leftmost one. We now define 
$$
I_{\varnothing}(K) = I_0(K)\cup U_{\emptyset} (K) \cup I_1(K) \ \mbox{and} \  U_{\emptyset} (K): = G_1,
$$
where we also define by convention that $I_0(K)$ is on the left hand side of $G_1$ and $I_1(K)$ is on the right of $G_1$. 
Suppose that $I_{\sigma}(K)$, $\sigma\in \Sigma^{\ast}$ has been defined. We will let $U_{\sigma}(K)$ to be the largest open interval from the collection of $\{G_n\}_{n\in\N}$ that is in the $I_{\sigma}$. Then we define
$$
I_{\sigma}(K) = I_{\sigma 0}(K)\cup U_{\sigma}(K)\cup I_{\sigma 1}(K).
$$
In this way, the Cantor set $K$ can be represented as 
$$
K  = \bigcap_{n=1}^{\infty} \bigcup_{\sigma\in \Sigma^n} I_{\sigma} (K).
$$




In this setting, the Newhouse thickness of $I_{\sigma}$ is defined to be 
$$
\tau_{\mathcal{N}} (I_{\sigma}) = \min\left\{\frac{|I_{\sigma0}(K)|}{|U_{\sigma}(K)|}, \frac{|I_{\sigma1}(K)|}{|U_{\sigma}(K)|}\right\},
$$
and the \textbf{Newhouse thickness} of the Cantor set $K$ is defined to be 
$$
\tau_{\mathcal{N}}(K) = \inf_{\sigma\in \Sigma^{\ast}} \tau_{\mathcal{N}}(I_{\sigma}). 
$$
Note that the Newhouse thickness is invariant under affine transformations on $\R$. Using the following Newhouse gap lemma (Lemma \ref{newhouse-gap-lemma}), J. Gallagher, the second-named author, and E. Weber \cite{GallagherLaiWeber23} proved that Cantor sets with positive Newhouse thickness are not measure universal.

\begin{lemma}[Newhouse gap lemma]\label{newhouse-gap-lemma}
    Let $K_1$ and $K_2$ be two Cantor sets of $\R^1$. Assume $\tau_{\mathcal{N}}(K_1)\tau_{\mathcal{N}}(K_2)\ge 1$ and $K_1$ is not in any gaps of $K_2$ and vice versa. Then $K_1\cap K_2\ne\emptyset$.  
\end{lemma}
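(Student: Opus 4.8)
I would prove this by the classical ``linked gaps'' argument, phrased in terms of the binary--tree description of $K_1$ and $K_2$ fixed above. Assume, for contradiction, that $K_1\cap K_2=\varnothing$. The only input from the thickness hypothesis is the following elementary fact, recorded once and for all: if $U=U_\sigma(K_1)$ is a gap of $K_1$, then each of the two adjacent component intervals $I_{\sigma 0}(K_1)$ and $I_{\sigma 1}(K_1)$ has length at least $\tau_{\mathcal N}(K_1)\,|U|$, and conversely every gap of $K_1$ contained in $I_{\sigma 0}(K_1)$ or $I_{\sigma 1}(K_1)$ has length at most $|U|$ (since $U_\sigma$ is the longest gap inside $I_\sigma$); the analogous statement holds for $K_2$. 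Call a pair $(U,V)$, with $U$ a gap of $K_1$ and $V$ a gap of $K_2$, \textbf{linked} if $U\cap V\neq\varnothing$ while neither of $U,V$ is contained in the closure of the other; equivalently, $V$ contains exactly one endpoint of $U$ and $U$ contains exactly one endpoint of $V$.

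The first step is to produce one linked pair. Since $K_1$ and $K_2$ are compact, disjoint, and --- reading ``gap'' so as to include the two unbounded complementary intervals, as is standard for the gap lemma --- neither lies in a gap of the other, the two sets must interleave; a short case analysis then yields a linked pair $(U_0,V_0)$. For instance, take $x_0\in K_1$ and $y_0\in K_2$ with $|x_0-y_0|=\operatorname{dist}(K_1,K_2)$, observe that the open interval between them is a gap of both $K_1$ and $K_2$, and enlarge. The hypothesis that neither set lies in a gap of the other is exactly what rules out the degenerate outcomes (one of the two gaps containing the other, or an entire convex hull lying inside a single gap).

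The heart of the matter is a three-way dichotomy applied to a linked pair $(U,V)$. After a reflection we may assume the endpoint of $U$ lying in $V$ is the right endpoint $b$ of $U$ and the endpoint of $V$ lying in $U$ is the left endpoint $c$ of $V$, so that $a<c<b<d$ where $U=(a,b)$ and $V=(c,d)$. Let $C$ be the component interval of $K_1$ immediately to the right of $U$ (this is $I_{\sigma 1}(K_1)$ when $U=U_\sigma(K_1)$), and $C'$ the component interval of $K_2$ immediately to the left of $V$. If $d$ lies in $\operatorname{int} C$, then the gap $U'$ of $K_1$ containing $d$ satisfies $U'\subseteq\operatorname{int} C$, so $U'$ is housed in a \emph{proper} descendant of the basic interval housing $U$, and one checks directly that $(U',V)$ is again linked. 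If $d\notin\operatorname{int} C$ but $a\in\operatorname{int} C'$, then symmetrically we obtain a linked pair $(U,V')$ in which the basic interval housing $V$ has passed to a proper descendant. The only remaining possibility is $d\notin\operatorname{int} C$ and $a\notin\operatorname{int} C'$; but then $C\subseteq V$ and $C'\subseteq U$, with strict containment of intervals, so
\[
\tau_{\mathcal N}(K_1)\,|U|\ \le\ |C|\ <\ |V|,\qquad
\tau_{\mathcal N}(K_2)\,|V|\ \le\ |C'|\ <\ |U|,
\]
and multiplying these gives $\tau_{\mathcal N}(K_1)\,\tau_{\mathcal N}(K_2)<1$, contradicting the hypothesis.

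To conclude, iterate the dichotomy starting from $(U_0,V_0)$. If the third case ever occurs we are done; otherwise we produce an infinite sequence of linked pairs in which, at every step, either the basic interval of $K_1$ carrying the current $K_1$-gap or the basic interval of $K_2$ carrying the current $K_2$-gap shrinks to a proper descendant. One of these two nested families is then infinite and hence shrinks to a single point $p$, lying in the corresponding Cantor set, say $p\in K_1$. Along that family the relevant $K_1$-gap stays linked with a gap of $K_2$ at every stage; since distinct gaps of $K_2$ are disjoint and every gap of $K_2$ lies in $\operatorname{conv}(K_2)$, the point $p$ can neither lie in the interior of a gap of $K_2$ nor outside $\operatorname{conv}(K_2)$, so $p\in K_2$ as well, contradicting $K_1\cap K_2=\varnothing$. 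The conceptual core is the short inequality in the dichotomy; I expect the genuine labor to be the case analysis behind the existence of the first linked pair, together with the routine-but-fussy verification that linkedness is preserved and that the housing interval genuinely shrinks at each iteration.
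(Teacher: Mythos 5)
The paper does not prove this lemma at all---it is quoted as a known result of Newhouse and used as a black box---so there is no internal proof to compare against. Your argument is the standard ``linked gaps'' proof from the literature (cf.\ Newhouse; Palis--Takens), and its logical skeleton is sound: the three-way dichotomy is correct (in the terminal case the adjacent bridges satisfy $C\subsetneq V$ and $C'\subsetneq U$, forcing $\tau_{\mathcal N}(K_1)\tau_{\mathcal N}(K_2)<1$), linkedness and boundedness of the gaps are preserved in the other two cases, and the infinite descent does produce a point of $K_1\cap K_2$ because at every stage the surviving $K_1$-gap contains an endpoint of a $K_2$-gap, i.e.\ a point of $K_2$, while the housing basic intervals shrink to a point. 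Two remarks. First, your reading of the hypothesis is not a pedantic aside but is actually necessary: with the paper's convention that ``gaps'' are only the bounded complementary intervals inside the convex hull, the lemma as literally stated is false (two disjoint translates of a thick Cantor set placed far apart satisfy all the stated hypotheses), so one must, as you do, count the two unbounded complementary intervals as gaps, or equivalently assume the convex hulls interlock. Second, the step you defer---extracting the first linked pair from a nearest pair $x_0\in K_1$, $y_0\in K_2$---is where that strengthened hypothesis is consumed, and it does require handling the sub-case where the complementary interval of $K_1$ to the right of $x_0$ (or of $K_2$ to the left of $y_0$) is unbounded; this works out, but it is the one piece of the write-up that is asserted rather than shown. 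With that caveat, the proof is correct.
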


\begin{theorem}[Gallagher--Lai--Weber]
    Cantor sets in $\R$ that have positive Newhouse thickness are not full measure universal, and thus not measure universal.
\end{theorem}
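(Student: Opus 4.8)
The plan is to combine the sumset reformulation of Proposition~\ref{sumset} with the Newhouse gap lemma. Write $\tau := \tau_{\mathcal N}(X) > 0$. By Proposition~\ref{sumset}, it suffices to exhibit a single Lebesgue-null set $M\subset\R$ with $X+\lambda M = \R$ for every $\lambda\neq 0$; and by the observation following Conjecture~\ref{conjecture_P}, it is in fact enough to produce a null set $M$ with $(X+\lambda M)^{\circ}\neq\varnothing$ for every $\lambda\neq 0$, since one may then replace $M$ by the still-null set $M+\Q$ and cover $\R$ by translating the interval inside $(X+\lambda M)^{\circ}$ by the dense set $\lambda\Q$.

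First I would fix, once and for all, a self-similar Cantor set $C\subseteq[0,1]$ (a middle-$(1-2\beta)$ set with $\beta$ close to $\tfrac12$) enjoying three properties simultaneously: $\tau_{\mathcal N}(C)\ge 1/\tau$; its largest complementary gap has length $g_{\max}<\tfrac12$; and $m(C)=0$. All three hold once $\beta$ is close enough to $\tfrac12$. Then set $M:=\bigcup_{n\in\Z}\bigcup_{k\in\Z}\bigl(2^{-n}C+2^{-n}k\bigr)$, a countable union of affine copies of $C$, hence Lebesgue-null; each block $B_{n,k}:=2^{-n}C+2^{-n}k$ is a Cantor set with convex hull $[2^{-n}k,2^{-n}(k+1)]$ and, by affine invariance of Newhouse thickness, $\tau_{\mathcal N}(B_{n,k})\ge 1/\tau$.

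Next I would fix $\lambda\neq 0$, put $c:=-1/\lambda$ and $d:=\operatorname{diam}X>0$, and note that for $t=\lambda s$ the equation $x+\lambda m=t$ has a solution $(x,m)\in X\times M$ precisely when $M\cap(cX+s)\neq\varnothing$, where $cX+s$ is an affine copy of $X$ with $\tau_{\mathcal N}(cX+s)=\tau$ and $\operatorname{diam}(cX+s)=|c|d$. I would then choose $n\in\Z$ with $2^{-n}g_{\max}<|c|d<2^{-n}$ (such $n$ exists exactly because $g_{\max}<\tfrac12$, so the admissible interval for $n$ has length $-\log_2 g_{\max}>1$). Since $|c|d<2^{-n}=\operatorname{diam}(B_{n,0})$, there is a nondegenerate interval $J$ of parameters $s$ with $\operatorname{hull}(cX+s)\subseteq\operatorname{hull}(B_{n,0})$ for all $s\in J$. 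For such $s$ I would verify the two hypotheses of Lemma~\ref{newhouse-gap-lemma}: $cX+s$ lies in no bounded gap of $B_{n,0}$ because its diameter $|c|d$ exceeds the largest such gap length $2^{-n}g_{\max}$, and in no unbounded component of the complement because its hull sits inside $\operatorname{hull}(B_{n,0})$; conversely $B_{n,0}$ lies in no gap of $cX+s$ because its diameter $2^{-n}$ exceeds $|c|d$ (which bounds every gap of $cX+s$) and it reaches both endpoints of its hull, which enclose $\operatorname{hull}(cX+s)$. Since $\tau_{\mathcal N}(B_{n,0})\,\tau_{\mathcal N}(cX+s)\ge(1/\tau)\tau=1$, Lemma~\ref{newhouse-gap-lemma} yields $B_{n,0}\cap(cX+s)\neq\varnothing$, hence $\lambda s\in X+\lambda M$; letting $s$ range over $J$ gives $X+\lambda M\supseteq\lambda J$, a nondegenerate interval.

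The main obstacle I expect is not any single estimate but the scale bookkeeping: one must guarantee that some piece of $M$ has scale comparable to that of the moving affine copy $cX+s$ for an \emph{entire interval} of translations $s$, and then check the (slightly fiddly) ``not in a gap'' hypotheses of the Newhouse gap lemma in both directions. Choosing $C$ with small top-level gap and packing all dyadic scales and positions into $M$ is precisely what makes this go through. The passage from ``$(X+\lambda M)^{\circ}\neq\varnothing$ for all $\lambda\neq 0$'' to ``$X$ is not full measure universal'' is then immediate from Proposition~\ref{sumset} and the remark after Conjecture~\ref{conjecture_P}, and ``not full measure universal'' implies ``not measure universal'' as already noted.
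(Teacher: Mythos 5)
Your proof is correct and follows essentially the same route as the paper's: both build the null set $M$ as a union of dyadically scaled and translated copies of a thick, measure-zero self-similar Cantor set, match scales so that neither set fits inside a gap of the other, and conclude via the Newhouse gap lemma. The only difference is that you verify criterion (iii) of Proposition \ref{sumset} (producing an interval inside $X+\lambda M$ and then densifying with $\Q$), whereas the paper verifies criterion (ii) directly by intersecting $\lambda X+t$ with a suitably chosen block of $M$ for every $\lambda$ and $t$; this is a cosmetic reformulation rather than a different argument.
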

\begin{proof}
    Suppose $X$ is a Cantor set in $\R$ with $\tau(X)>0$. Since $X$ not being full measure universal implies $X$ not being measure universal, by Proposition \ref{sumset}, it suffices to prove that there exists a measure zero set $M$ such that
    \begin{equation}\label{intersection}
        (\lambda X + t) \cap M \neq \varnothing \qquad \forall \lambda \in \R\setminus\{0\} \text{ and } \forall t\in \R.
    \end{equation}    
    Also, note that we may assume $X$ has convex hull $[0,1]$, since affine transformations do not affect the measure universality of $K$.

    We construct the desired measure zero set $M$ as a countable union of affine copies of a Cantor set $K$ with measure zero. Since $\tau(X)>0$, there exists $N\in\N$ such that $N>\frac{1}{\tau(X)}$. Let $K$ be a Cantor set constructed by starting with a closed unit interval $[0,1]$ and repeatedly removing the middle $1/(2N+1)$ closed interval, resulting in a symmetric Cantor set with a contraction ratio $\frac{N}{2N+1}$ and $m(K)=0$. Since for each $\sigma\in \Sigma^n$, $|I_{\sigma 0}|=|I_{\sigma 1}|=\left(\frac{N}{2N+1}\right)^{n+1}$ and $|U_\sigma|= \frac{1}{2N+1}\left(\frac{N}{2N+1}\right)^{n}$, the Newhouse thickness of $K$ is given by $\tau(K)= N$. Define
    \begin{align*}
        M=\bigcup_{(n,l)\in\Z\times \Z} 2^n (K+l)
    \end{align*}
and let $\lambda \in \R\setminus\{0\}$ and $t\in \R$. Since $M$ has measure zero by subadditivity of the Lebesgue measure, it suffices to prove \eqref{intersection}.

Note that there exists a unique $(n,l)\in \Z\times \Z$ such that $|\lambda|\in (2^{n-1}, 2^n]$ and $t\in (l2^n, (l+1)2^n]$. We apply the Newhouse gap lemma (Lemma \ref{newhouse-gap-lemma}) to two Cantor sets $K_1=\lambda X + t$ and $K_2=2^n(K+l)$. To check the conditions of the Newhouse gap lemma, we first note that
\begin{align*}
    \tau(K_1) \tau(K_2)= \tau (X) \tau(K) = \tau (X) \cdot N> 1.
\end{align*}
Next, we show that $K_1$ is not in any gaps of $K_2$ and vice versa. To see this, for each $i=1,2$, let $I_i$ be the convex hull of $K_i$ and $O_i$ be the largest bounded gap of $K_i$. Then, $|I_1|=|\lambda|$, $|O_1|=|\lambda|\cdot |O_X| \leq |\lambda|$, where $|O_X|$ is the largest bounded gap of $X$, $|O_2|=2^{n-1}$ and $|I_2|=2^n$. By our choice of $(n,l)$, we have $|O_1|\leq |I_2|$ and $|O_2|\leq |I_1|$, implying that no gap of $K_1$ contains $K_2$ and vice versa.

Thus, the conditions of the Newhouse gap lemma (Lemma \ref{newhouse-gap-lemma}) are satisfied, and we have its conclusion $K_1\cap K_2 \neq \varnothing$. This proves that $M$ is the desired measure zero set, completing the proof.
\end{proof}

\subsection{Cantor sets with positive Hausdorff dimension}

It is well-known that if a Cantor set $K$ has positive Newhouse thickness, then $K$ has positive Hausdorff dimension. Since Cantor sets with positive Newhouse thickness are not measure universal, it is natural to ask is if Cantor sets with positive Hausdorff dimension are not measure universal. This turns out to be true, as we see below. We would like to thank P. Shmerkin for pointing out this argument to us during the BIRS workshop that led to this proceeding. 

Let $s>0$. Recall that a measure $\mu$ on $\R$ is called a \textbf{Frostman measure} (or an $s$-Frostman measure) if there exists $C>0$ such that
\begin{align*}
    \mu(B(x,r)) \leq  C r^s \quad \forall x\in \R, \ r>0.
\end{align*}
In \cite{martingales}, P. Shmerkin and V. Suomala introduced the notion of \textbf{spatially independent martingales}. The detailed constructions and their properties can be found in their paper. We only use the following result, which is a simpler version of their Theorem 13.1.

\begin{theorem}[]\label{shmerkin}\index{}
If $\nu$ is a $s$-frostman measure on $\R$, then there exists a spatially independent martingale $(\mu_n)$ such that, almost surely, the weak limit $\mu_{\infty}$ of $(\mu_n)$ satisfies that $\mu_\infty * T\nu$ is an absolutely continuous measure with H\"{o}lder continuous density for every affine transformation $T$ in $\R$ and $\dim(supp (\mu_{\infty})) <1$. Moreover, the map $(T,x)\to f_T(x)$ is a H\"{o}lder continuous function, where $f_T(x)$ is the density function of  $\mu_\infty * T\nu$. 
\end{theorem}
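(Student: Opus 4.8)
\medskip

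\noindent\textbf{Proof strategy.}\ The plan is to realise $\mu_\infty$ as the almost sure weak limit of a \emph{spatially independent martingale} obtained by iterated random multiplication, and then to control the convolutions $\mu_\infty*T\nu$ through moment estimates that exploit the spatial independence and are strong enough to be made uniform in the affine parameter $T$. Fix integers $2\le k<b$ with $\log k/\log b<1$, and let $\mu_0$ be normalised Lebesgue measure on $[0,1]$. Given $\mu_{n-1}$, for each $b$-adic interval $I$ of generation $n-1$ choose, independently over all such $I$ and of everything so far, a uniformly random $k$-element subset of the $b$ subintervals of $I$ of generation $n$, and let $\mu_n$ have density equal to $b/k$ times that of $\mu_{n-1}$ on the chosen subintervals and $0$ elsewhere. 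Then $(\mu_n)$ is a measure-valued martingale with $\mu_n(\R)=1$, so it converges weakly almost surely to a limit measure $\mu_\infty$ whose (random) support is a Cantor set of Hausdorff dimension $\log k/\log b<1$ almost surely, in particular Lebesgue-null.

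Now fix an affine map $T$ and set $g_n:=\mu_n*T\nu$; since $\mu_n$ has a bounded density and $T\nu$ is a finite measure, each $g_n$ is continuous and non-negative with $\int g_n=\nu(\R)$. The increment $h_n:=g_n-g_{n-1}=(\mu_n-\mu_{n-1})*T\nu$ is a sum, over the generation-$(n-1)$ intervals $I$, of independent mean-zero terms, each localised near the $T$-image of the region over which $\nu$ spreads the mass carried by $I$. Because $\nu$ is $s$-Frostman this spreading keeps that region quantitatively large, so the ``variance'' injected at step $n$ is geometrically small; a Marcinkiewicz--Zygmund / martingale-difference inequality then gives, for each even integer $p$, an estimate of the form $\mathbb{E}\,\|h_n\|_p^p\le C_p\,\rho^{np}$ with $\rho=\rho(s,b,k)<1$. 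Summing over $n$ shows $(g_n)$ is Cauchy in $L^p$ for every $p<\infty$, so $g_n\to\mu_\infty*T\nu$ in each $L^p$ and the limiting density lies in every $L^p$. Moreover each $h_n$ oscillates essentially at scale $b^{-n}$, and the combination of geometric $L^p$-smallness with this scale localisation forces the sum $\mu_\infty*T\nu=\sum_n h_n$ to have a Hölder continuous density $f_T$, with exponent $\alpha=\alpha(s)>0$ and a quantitative norm bound.

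To make all of this hold simultaneously for every $T$, I would restrict $T$ to a compact family of affine parameters (bounded translation part, contraction ratio bounded away from $0$ and $\infty$), which is the regime used in the applications. The assignment $T\mapsto T\nu$ is Hölder continuous into a space of negative smoothness, so once the density $f_T$ is constructed for all $T$ in a countable dense set it extends to a function that is jointly Hölder in $(T,x)$. Crucially, the tail bounds above are summable even after a union bound over a $b^{-n}$-net of parameters at generation $n$, so by Borel--Cantelli the $L^p$ and Hölder estimates hold almost surely for every net point at once; joint Hölder continuity then propagates them to all $T$ in the family, and intersecting the resulting full-probability events yields the theorem.

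The main obstacle is the moment estimate $\mathbb{E}\,\|h_n\|_p^p\le C_p\,\rho^{np}$ with the constants $C_p$ and the rate $\rho$ controlled explicitly in $p$ and uniformly in the number of generations: this control is what must be tight enough both to upgrade ``density in every $L^p$'' to genuine Hölder continuity (via the scale-by-scale decomposition) and to survive the union bound over the parameter net. Quantifying how an $s$-Frostman measure redistributes mass across $b$-adic scales, so that all moments of the martingale increments decay geometrically with a rate one can track, is the geometric heart of the matter and is exactly what the spatially independent martingale machinery of \cite{martingales} is designed to deliver.
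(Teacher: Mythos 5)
The paper does not prove this statement: it is quoted verbatim (as ``a simpler version of their Theorem 13.1'') from Shmerkin and Suomala's monograph \cite{martingales}, so there is no in-paper argument to compare against. Your sketch does follow the architecture of that machinery --- fractal-percolation-type spatially independent martingale, geometric $L^p$ decay of the convolution increments $(\mu_n-\mu_{n-1})*T\nu$, upgrade to H\"older regularity via the scale-by-scale decomposition, and a net-plus-continuity argument to make the conclusion uniform in $T$ --- so the overall plan is the right one.

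There is, however, one concrete gap. You fix $2\le k<b$ subject only to $\log k/\log b<1$, but the claimed estimate $\mathbb{E}\,\|h_n\|_p^p\le C_p\rho^{np}$ with $\rho<1$ requires the complementary lower bound $\log k/\log b>1-s$. The pointwise variance of $h_n$ at a point $x$ is controlled by $(b/k)^n\cdot\sup_I T\nu(x-I)\lesssim (b/k)^n b^{-ns}=b^{\,n(1-\log k/\log b-s)}$, which decays geometrically only when $\log k/\log b+s>1$. Without that condition the conclusion is simply false: take $\nu$ the natural measure on a self-similar Cantor set of dimension exactly $s$ and choose $k=2$ with $b$ large; then $\operatorname{supp}(\mu_\infty)+T(\operatorname{supp}\nu)$ has Hausdorff dimension at most $\log k/\log b+s<1$, hence Lebesgue measure zero, so $\mu_\infty*T\nu$ is singular and cannot have a (H\"older) density. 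So the choice of the percolation parameters must be tied to $s$, and this is precisely where the hypothesis that $\nu$ is $s$-Frostman with $s>0$ enters; as written your construction never uses it. Beyond that, the central moment bound is asserted rather than proved (you acknowledge this and defer it to \cite{martingales}), and the restriction to a compact family of affine maps should be made explicit in the statement you actually establish, with the full claim recovered by a countable exhaustion; but these are presentational, whereas the missing dimension condition is a genuine error in the construction.
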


\begin{corollary}[]\label{}\index{}
A Cantor set in $\R$ that has positive Hausdorff dimension is not measure universal.
\end{corollary}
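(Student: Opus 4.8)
The plan is to combine the sumset reformulation of Proposition \ref{sumset} with the spatially independent martingale machinery of Theorem \ref{shmerkin}. Let $X$ be a Cantor set with $\dim_H X = s > 0$. First, by Frostman's lemma, fix any $s' \in (0,s)$ and a Borel probability measure $\nu$ supported on $X$ with $\nu(B(x,r)) \le C r^{s'}$ for all $x \in \R$, $r>0$; i.e.\ $\nu$ is an $s'$-Frostman measure. Apply Theorem \ref{shmerkin} to $\nu$ and fix a realization of the resulting spatially independent martingale with non-zero weak limit $\mu_\infty$ such that $M := \operatorname{supp}(\mu_\infty)$ has $\dim_H M < 1$---hence $m(M) = 0$---and $\mu_\infty * T\nu$ has a continuous density $f_T$ for every affine map $T(x) = \lambda x + t$ with $\lambda \ne 0$.

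The heart of the argument is that $f_T$ cannot be the zero function: $\int_{\R} f_T = \mu_\infty(\R)\,\nu(\R) > 0$, so $\{f_T > 0\}$ is a non-empty open set. Since $\mu_\infty$ and $\operatorname{supp}\nu \subseteq X$ are compactly supported, the support of $\mu_\infty * T\nu$ is the closed set $M + T(\operatorname{supp}\nu) \subseteq M + \lambda X + t$, and the continuous density $f_T$ is supported there; hence $M + \lambda X + t$ has non-empty interior for every $\lambda \ne 0$ and $t \in \R$. Discarding the translation by $t$ and rescaling by $\lambda^{-1}$, this says precisely that $X + \mu M$ has non-empty interior for every $\mu \ne 0$.

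Finally, I would pass from "non-empty interior" to "everything" by a countable union of translates. Set $M^* := M + \Q = \bigcup_{q \in \Q}(M+q)$, which still has Lebesgue measure zero by countable subadditivity. For any $\mu \ne 0$ we have $X + \mu M^* = (X + \mu M) + \mu \Q$; the set $X + \mu M$ contains a non-degenerate open interval $J$, and $\mu\Q$ is dense in $\R$, so the $\mu\Q$-translates of $J$ already cover $\R$. Thus $X + \mu M^* = \R$ for every $\mu \ne 0$, so $M^* \in \mathcal{N}$ witnesses the failure of condition (iii) of Proposition \ref{sumset} simultaneously for all $\lambda$; therefore $X$ is not full measure universal, and in particular not measure universal. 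The genuine difficulty is entirely packaged inside Theorem \ref{shmerkin}; in the elementary reduction the only points needing care are ensuring $\mu_\infty \ne 0$ (so that $f_T \not\equiv 0$), which is where one invokes the finer properties of the Shmerkin--Suomala construction, and the elementary fact that an interval plus a dense set is all of $\R$, which is what lets the single set $M^*$ work uniformly in the scaling parameter.
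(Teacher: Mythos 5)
Your proposal is correct and follows the same overall strategy as the paper: Frostman's lemma, the Shmerkin--Suomala machinery of Theorem \ref{shmerkin} to get $(\operatorname{supp}\mu_\infty + TX)^\circ \ne \varnothing$ for every affine $T$, and the sumset reformulation of Proposition \ref{sumset}. The only place you diverge is the endgame, and your version is slightly cleaner: the paper invokes the H\"older continuity of $(T,x)\mapsto f_T(x)$ to extract an interval of \emph{uniform} length $\delta$ for all $T$ near the identity, sets $M = A + \delta\Z$, and then takes a countable union of dilates to handle all $\lambda$; you instead take $M^* = M + \Q$ and observe that for each fixed $\mu\ne 0$ the set $X + \mu M$ contains \emph{some} interval (no uniformity needed), which together with the density of $\mu\Q$ already gives $X + \mu M^* = \R$. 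This dispenses with the joint-continuity clause of Theorem \ref{shmerkin} and with the countable union of dilates, at no cost. You are also right to flag that one must know $\mu_\infty \ne 0$ (so $f_T \not\equiv 0$); the paper's proof relies on this implicitly as well, and it is part of the Shmerkin--Suomala construction.
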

\begin{proof}
Let $X$ be a Cantor set in $\R$ with $s= \dim_H (X)>0$. For all $\varepsilon\in (0,s)$, by Frostman's Lemma, we can find  a $(s-\varepsilon)$-Frostman measure $\nu$ on $X$ . By Theorem \ref{shmerkin}, there exists a spatially independent martingale $(\mu_n)$ such that almost surely, $\mu_\infty * T\nu$ is absolutely continuous with H\"{o}lder continuous density for every affine transformation $T$ in $\R$. In particular, the support $\text{supp}(\mu_\infty * T\nu)$ has a nonempty interior. Then,
\begin{align*}
    \varnothing \ne(\text{supp}(\mu_\infty * T\nu))^\circ = (\text{supp} (\mu_\infty)+T(\text{supp}(\nu)))^\circ = (A + TX)^\circ 
\end{align*}
where $A=\text{supp} (\mu_\infty)$, for all affine transformations $T$. By the continuity property of the map $(T,x)\to f_T(x)$, we know that there exists $\delta>0$ such that $(A+TX)^{\circ}\supset I$ where $I$ is an interval of length $\delta>0$ for all $T$ is a neighborhood of the identity map. By taking $M = A+\delta\Z$, we see that $\lambda X+M = \R$ for all $\lambda$ in the neighborhood of 1. By considering union of countably many dilates of $M$, we can make $\lambda X+M = \R$ hold for all non-zero $\lambda$.  Multiplying by a constant, we find that $X+\lambda M = \R$. As the support of $\mu_{\infty}$ has measure zero, the set $M$ also has measure zero.  This implies that $X$ is not full measure universal by Proposition \ref{sumset}.
\end{proof}

\section{Cantor sets and topological universality}\label{Section 4}
\subsection{Topological universality} In \cite{GallagherLaiWeber23}, a topological version of the Erd\H{o}s similarity conjecture was considered. A subset $X$ of $\R$ is said to be \textbf{topologically universal} if for every dense $G_\delta$ subset $G$ of $\R$, there exist $\lambda \in \R\setminus \{0\}$ and $t\in \R$ such that $\lambda X + t \subset G$. While the measure universality of Cantor sets remains open, it was proven in the same paper that all Cantor sets in $\R^d$ are not topologically universal. In this subsection, we see how a recent tool (known as the containment lemma) introduced in \cite{jung2024interiorcertainsumscontinuous} gives an alternative proof of this result on $\R^1$.



\begin{lemma}[Containment lemma]
\label{containment-lemma}
Let $K$ be a Cantor set in $\R$. Suppose that $\widetilde{K}$ is a  Cantor set in $\R$ with $I_{\emptyset}(K) \subset I_{\emptyset}(\widetilde{K})$ and \begin{equation}\label{eq_gap-length}
\max\{|U_{\sigma}(\widetilde{K})|:\sigma\in\Sigma^n\}<\min \{ |U_{\sigma}(K)|:\sigma\in \Sigma^n \} \quad \forall n\in\N.
\end{equation}
Then, $K\cap \widetilde{K}\neq \varnothing.$
\end{lemma}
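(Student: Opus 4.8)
\textit{Proof plan.} The plan is to build, by induction on $n\ge 0$, a nested pair of construction intervals — a level-$n$ interval $I_{\sigma_n}(K)$ of $K$ together with a level-$n$ interval $I_{\tau_n}(\widetilde{K})$ of $\widetilde{K}$, with $\sigma_{n+1}$ a child of $\sigma_n$ and $\tau_{n+1}$ a child of $\tau_n$ — satisfying $I_{\sigma_n}(K)\subseteq I_{\tau_n}(\widetilde{K})$ for every $n$. The base case $n=0$ is precisely the hypothesis $I_{\emptyset}(K)\subseteq I_{\emptyset}(\widetilde{K})$. Granting the construction, the intervals $I_{\sigma_n}(K)$ are nested and closed with lengths tending to $0$ (otherwise their intersection would be a nondegenerate interval contained in the totally disconnected set $K$), so $\bigcap_n I_{\sigma_n}(K)=\{z\}$ for a single point $z$; then $z\in K$ since $z\in\bigcup_{\sigma\in\Sigma^n}I_\sigma(K)$ for every $n$, and $z\in\widetilde{K}$ since $z\in I_{\sigma_n}(K)\subseteq I_{\tau_n}(\widetilde{K})$ for every $n$ forces $z$ into the singleton $\bigcap_n I_{\tau_n}(\widetilde{K})\subseteq\widetilde{K}$. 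Hence $K\cap\widetilde{K}\neq\varnothing$.

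For the inductive step I would take $I_{\sigma}(K)\subseteq I_{\tau}(\widetilde{K})$ with $\sigma,\tau\in\Sigma^n$ and split both intervals, $I_{\sigma}(K)=I_{\sigma 0}(K)\cup U_{\sigma}(K)\cup I_{\sigma 1}(K)$ and $I_{\tau}(\widetilde{K})=I_{\tau 0}(\widetilde{K})\cup U_{\tau}(\widetilde{K})\cup I_{\tau 1}(\widetilde{K})$. The key point — and, I expect, the only step that really requires thought — is that $U_{\tau}(\widetilde{K})$ cannot meet both $I_{\sigma 0}(K)$ and $I_{\sigma 1}(K)$: an interval meeting both would have to contain the closed gap $\overline{U_{\sigma}(K)}$ lying between them, giving $|U_{\tau}(\widetilde{K})|\ge |U_{\sigma}(K)|$, whereas \eqref{eq_gap-length} applied at level $n$ yields $|U_{\tau}(\widetilde{K})|\le\max_{\rho\in\Sigma^n}|U_{\rho}(\widetilde{K})|<\min_{\rho\in\Sigma^n}|U_{\rho}(K)|\le |U_{\sigma}(K)|$, the last inequality because $U_\sigma(K)$ is by construction the largest gap inside $I_\sigma(K)$. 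Consequently one of the two children, say $I_{\sigma i}(K)$, is disjoint from $U_{\tau}(\widetilde{K})$; being connected and contained in $I_{\tau}(\widetilde{K})\setminus U_{\tau}(\widetilde{K})=I_{\tau 0}(\widetilde{K})\sqcup I_{\tau 1}(\widetilde{K})$, it must lie in a single child $I_{\tau j}(\widetilde{K})$. Setting $\sigma_{n+1}=\sigma i$ and $\tau_{n+1}=\tau j$ closes the induction.

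The rest is routine bookkeeping with nested intervals; the one thing to keep an eye on is that the argument invokes \eqref{eq_gap-length} for every $n\ge 0$, including the level-$0$ instance $|U_{\emptyset}(\widetilde{K})|<|U_{\emptyset}(K)|$ that is needed already at the first step (this is also exactly what prevents the degenerate situation in which $K$ sits inside the single largest gap of $\widetilde{K}$).
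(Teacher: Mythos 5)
Your proof is correct and follows essentially the same route as the paper: an induction producing nested pairs of level-$n$ construction intervals $I_{\sigma_n}(K)\subseteq I_{\tau_n}(\widetilde K)$, where the key step (that the strictly smaller gap of $\widetilde K$ cannot straddle the gap of $K$, so one child of $I_{\sigma_n}(K)$ lands inside one child of $I_{\tau_n}(\widetilde K)$) matches the paper's endpoint comparison. The only cosmetic difference is that you conclude via the nested-interval property and total disconnectedness of $K$, while the paper extracts approximating sequences $\alpha_n\in K$, $\widetilde\alpha_n\in\widetilde K$ with $|\alpha_n-\widetilde\alpha_n|\to 0$ and uses compactness.
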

\begin{proof}
Since $K$ and $\widetilde{K}$ are compact, it suffices to show that there exists a sequence $(\alpha_n, \widetilde{\alpha}_n)\in K\times \widetilde{K}$ such that $\lim_{n\to\infty} |\alpha_n-\widetilde{\alpha}_n|=0$. This follows from the next claim:

\textit{Claim.} For each $n\in \mathbb{N}$, there exists $\sigma_n, \sigma_n '\in \Sigma^n$ where $\sigma_n$ is a child of $\sigma_{n-1}$ and $\sigma_n'$ is a child of $\sigma_{n-1}'$, such that $I_{\sigma_n}(K)\subset I_{\sigma_n'}(\widetilde{K})$.

Assuming the claim, we can just take $\alpha_n\in K\cap I_{\sigma_n}(K)$ and $\alpha_n'\in I_{\sigma_n'}(\widetilde{K})$. Since $|I_{\sigma_n'}(K)|\to 0$ as $n\to\infty$ and $|\alpha_n-\alpha_n'|\le |I_{\sigma_n'}(K)|$, this finishes the proof. 

We now prove the claim by induction. Note that the base case is true since $I_\varnothing (K)\subset I_\varnothing (\widetilde{K})$ by assumption. For the induction hypothesis, suppose that there exist $\sigma_n$ and $\sigma_n'$ such that $I_{\sigma_n}(K)\subset I_{\sigma'_n}(\widetilde{K})$ where $\sigma_n$ and $\sigma'_n$ form a chain of children up to $n$. We now proceed to the induction step of $n+1$.

Let us write $I_{\sigma_n}(K) = [a_0,b_0]$ and $I_{\sigma'_n}(\widetilde{K}) = [c_0,d_0]$. Then let us  denote the next children  $I_{\sigma_{n}0}(K)$, $I_{\sigma_{n}1}(K)$, $I_{\sigma'_{n}0} (\widetilde{K})$, $I_{\sigma'_{n}1} (\widetilde{K})$ respectively by  $[a_0,a]$,$[b,b_0]$, $[c_0,c]$ and $[d,d_0]$,  so that  
$$
{\color{black} I_{\sigma_{n}}(K) =[a_0,a]\cup U_{\sigma_n}(K)\cup [b,b_0], \ I_{\sigma'_{n}}(\widetilde{K}) =[c_0,c]\cup U_{\sigma_n'}(\widetilde{K})\cup [d,d_0].}
$$
By the induction hypothesis $c_0\le a_0$ and $b_0\le d_0$, we have
\begin{align*}
 {\color{black}   d-c=|U_{\sigma_n}(\widetilde{K})| < \min \{ |U_{\sigma_{n}}(K)|:\sigma_{n}\in \Sigma^{n} \} \leq |U_{\sigma_{n}}(K)|=b-a}.
\end{align*}
{\color{black} This means that we must have $a<c$ or $d<b$. In the first case, $[c_0,c]\supset [a_0,a]$ meaning that $I_{\sigma'_{n}0}(\widetilde{K})\supset I_{\sigma_{n}0}({K})$, while in the second case, we have $I_{\sigma'_{n}1}(\widetilde{K})\supset I_{\sigma_{n}1}({K})$. This shows that the claim hold true for $n+1$ and hence completes the proof.}
\end{proof}

\begin{theorem}[Jung--Lai]\label{cor-perturbation}
   If $K$ is a Cantor set in $\R$, then there exists a Cantor set $\widetilde{K}$ in $\R$ and $\delta >0$ such that $K\cap (\lambda \widetilde{K}+t) \neq \varnothing$ for all $\lambda\in \left(\frac{1}{1+\delta}, 1+\delta\right)$ and for all $t\in (-\delta, \delta)$. In particular, $K$ is not topologically universal.
\end{theorem}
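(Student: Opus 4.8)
The plan is to construct $\widetilde{K}$ so that, after any mild affine perturbation $\lambda\widetilde{K}+t$ with $\lambda$ close to $1$ and $t$ close to $0$, the hypotheses of the Containment Lemma (Lemma \ref{containment-lemma}) are met with $K$ and the perturbed copy playing the roles of the two Cantor sets. Since the Newhouse thickness and all the ratio conditions in the Containment Lemma are either affine-invariant or only involve comparisons of gap lengths, a small perturbation changes every relevant quantity by only a bounded multiplicative factor, so it suffices to build $\widetilde{K}$ with a definite ``gap-length gap'' to spare.

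First I would normalize so that $I_\varnothing(K)=[0,1]$, and record the two sequences
$$
g_n := \min\{|U_\sigma(K)| : \sigma\in\Sigma^n\}, \qquad G_n := \max\{|U_\sigma(K)|:\sigma\in\Sigma^n\},
$$
both of which are strictly positive and tend to $0$. The key point is that the levels of a Cantor set are defined by removing the \emph{largest remaining gap}, so $G_{n+1}\le g_n$ automatically; hence the sequence $g_n$ is itself strictly decreasing to $0$. Next I would construct $\widetilde{K}$ as a symmetric Cantor set on an interval slightly larger than $[0,1]$ — say $[-\varepsilon_0,1+\varepsilon_0]$ — whose $n$th-level gaps all have a common length $\widetilde{g}_n$ chosen recursively so small that
$$
3\,\widetilde{g}_n < g_n \qquad\text{for every } n\in\N;
$$
a symmetric construction makes ``largest gap at level $n$'' unambiguous and guarantees $\max\{|U_\sigma(\widetilde K)|:\sigma\in\Sigma^n\}=\widetilde g_n$. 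Because for a symmetric Cantor set one may prescribe the gap lengths with a great deal of freedom (subject only to the obvious packing constraint $2\widetilde g_{n}$ worth of gaps fitting inside each level-$(n-1)$ interval, which is easy to satisfy by taking $\widetilde g_n$ decreasing fast enough), such a $\widetilde K$ exists.

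Now fix $\delta>0$ small, to be determined, and let $\lambda\in(\tfrac1{1+\delta},1+\delta)$, $t\in(-\delta,\delta)$. Under the map $x\mapsto \lambda x + t$, the interval $[-\varepsilon_0,1+\varepsilon_0]$ maps to an interval still containing $[0,1]=I_\varnothing(K)$, provided $\delta$ is small relative to $\varepsilon_0$; this gives $I_\varnothing(K)\subset I_\varnothing(\lambda\widetilde K+t)$. Moreover each gap length scales exactly by $\lambda<1+\delta$, so
$$
\max\{|U_\sigma(\lambda\widetilde K+t)|:\sigma\in\Sigma^n\} = \lambda\,\widetilde g_n \le (1+\delta)\,\widetilde g_n < (1+\delta)\tfrac{g_n}{3} \le g_n = \min\{|U_\sigma(K)|:\sigma\in\Sigma^n\}
$$
as soon as $\delta\le 2$. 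Thus condition \eqref{eq_gap-length} of the Containment Lemma holds for the pair $\bigl(K,\ \lambda\widetilde K+t\bigr)$, and the lemma yields $K\cap(\lambda\widetilde K+t)\neq\varnothing$. The final sentence — that $K$ is not topologically universal — follows because $G:=\R\setminus\bigl(\bigcup_{\lambda\in\Q\setminus\{0\},\,t\in\Q}(\lambda\widetilde K+t)\bigr)$ is a dense $G_\delta$ (each $\lambda\widetilde K+t$ is closed, nowhere dense, and measure zero hence with empty interior), and by what we just proved $G$ contains no set of the form $\lambda K+t$ whenever $(\lambda^{-1},-\lambda^{-1}t)$ lies in the rational box around $(1,0)$, which is enough to contradict topological universality after an affine rescaling.

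The main obstacle is the bookkeeping in the construction of $\widetilde K$: one must ensure both that it is genuinely a Cantor set with $\max$-gap at level $n$ equal to a prescribed value (handled by symmetry), and that the prescribed values $\widetilde g_n$ decay fast enough to pack inside the previous level while still satisfying $3\widetilde g_n<g_n$ (handled by choosing $\widetilde g_n$ recursively, e.g. $\widetilde g_n\le \min\{\tfrac{g_n}{3},\ \tfrac14(\text{length of a level-}(n-1)\text{ interval of }\widetilde K)\}$). Everything else is a routine affine-invariance check; the real content is entirely carried by Lemma \ref{containment-lemma}.
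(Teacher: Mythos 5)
Your proposal is correct and follows essentially the same route as the paper: build $\widetilde{K}$ with convex hull strictly containing that of $K$ and with level-$n$ gaps smaller than $\min\{|U_\sigma(K)|:\sigma\in\Sigma^n\}$ by a definite factor (the paper uses $1/2$, you use $1/3$), absorb the affine perturbation into that slack, invoke the Containment Lemma, and then take a countable union of rational affine images of $\widetilde{K}$ to get a meager $F_\sigma$ whose complement is a dense $G_\delta$ containing no affine copy of $K$. (The paper perturbs $K$ rather than $\widetilde{K}$ before applying the lemma, but this is the same argument up to inverting the affine map.)

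Two side remarks in your write-up are false, though neither is load-bearing. First, it is not true that $G_{n+1}\le g_n$ ``automatically'': the largest gap inside a level-$(n+1)$ piece descending from one branch can exceed the largest gap of a level-$n$ piece in a different branch (e.g.\ if all gaps in $I_1(K)$ are far smaller than those in $I_{00}(K)$), so $g_n$ need not be monotone. You only need $g_n>0$ for each $n$, and you can enforce monotonicity of $\widetilde g_n$ directly in the recursive choice, e.g.\ $\widetilde g_n\le\min\{g_1,\dots,g_n\}/4$. Second, $\widetilde K$ constructed this way typically has \emph{positive} Lebesgue measure (the paper points this out explicitly), so ``measure zero hence empty interior'' is the wrong justification for nowhere density of $a\widetilde K+b$; the correct one is that a Cantor set is compact and totally disconnected, hence closed with empty interior.
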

\begin{proof}
We take the Cantor set $\widetilde{K}$ such that $I_{\emptyset}(\widetilde{K})$ strictly contain $I_{\emptyset}(K)$ and satisfying (\ref{eq_gap-length}) with right hand side replaced by $1/2\cdot \min\{|U_{\sigma}(K)|: \sigma\in\Sigma^n\}$. Then, $K\cap \widetilde{K}\ne\emptyset$. Note that $I_{\sigma}(\lambda K+t) = \lambda I_{\sigma}(K)+t$. By choosing $\delta$ sufficiently small, the conditions $I_{\emptyset}(\lambda K+ t) \subset I_{\emptyset}(\widetilde{K})$ and (\ref{eq_gap-length}) holds true with $K$ replaced by $\lambda K+t$ for all $\lambda\in \left(\frac{1}{1+\delta}, 1+\delta\right)$ and $t\in (-\delta,\delta)$.  The containment lemma (Lemma \ref{containment-lemma}) now implies our first desired conclusion.

To prove that $K$ is not topologically universal, define
\begin{align*}
    M=\bigcup_{(a,b)\in \Q\times \Q} (a\widetilde{K}+b).
\end{align*}
Note that for each $\lambda\in \R\setminus \{0\}$ and $t\in \R$, there exists $(a,b)\in \Q\times \Q$ such that $\lambda^{-1} a \in \left(\frac{1}{1+\delta},1+\delta\right)$ and $\lambda^{-1}b -t \in (-\delta, \delta)$. Thus, by the first part of the theorem, we have
\begin{align*}
    K\cap \left( \lambda^{-1} \left( (a\widetilde{K}+b)-t\right) \right) \neq \varnothing,
\end{align*}
which is equivalent to $\left(\lambda K + t \right)\cap (a\widetilde{K}+b)\neq \varnothing$. This implies that for every $\lambda \in \R\setminus \{0\}$ and $t\in \R$, $(\lambda K + t) \cap M \neq \varnothing$, i.e., $\lambda K + t \not\subset M^c$. By Baire category theorem, $M$ is a nowhere dense $F_\sigma$ set, implying that $M^c$ is a dense $G_\delta$ set. Therefore, $K$ is not topologically universal.
\end{proof}

Unfortunately, the Cantor sets produced in Theorem \ref{cor-perturbation} have positive Lebesgue measure, so we cannot conclude anything about measure non-universality from this approach. 

\subsection{Connection to descriptive set theory} Topologically universal sets can be completely classified via the notion of strong measure zero sets. A set $E\subset \R$ is called {\bf strong measure zero} if for all sequences $(\varepsilon_n)_{n=1}^{\infty}$, we can find intervals $I_n$ covering $E$ and $|I_n|\le \varepsilon_n$.  In \cite{jung2024topologicalerdhossimilarityconjecture}, it was shown that a set is topologically universal if and only it is of strong measure zero. The following is a famous conjecture from descriptive set theory:

{\bf  Borel conjecture $({\mathsf{BC}})$}:  a strong measure zero set must be countable. 

It is known that ${\mathsf{BC}}$ is independent of the ${\mathsf{ZFC}}$ axioms of set theory \cite{Sierpinski, Laver}. Hence, the existence of uncountable topologically universal sets cannot be proved or refuted within ${\mathsf{ZFC}}$. From measure-category duality, there is also a notion of {\bf strongly meager sets}, which are sets $E$ such that $E+M\ne \R$ for all $M\in{\mathcal N}$, the collection of Lebesgue measure zero sets. This leads us to the

{\bf Dual Borel conjecture $({\mathsf{dBC}})$}:  a strongly meager set must be countable. 

It is known that $({\mathsf{dBC}})$ is independent of ${\mathsf{ZFC}}$ as well \cite{Carlson}. We refer interested readers to \cite{jung2024topologicalerdhossimilarityconjecture} for details and other references. 

We conclude Sections \ref{Section 3} and \ref{Section 4} with the following conjecture:

\begin{conjecture}\label{conjecture_Q} \textcolor{white}{Conjecture.}
\begin{enumerate}
\item A set is full measure universal if and only if it is strongly meager.
\item If $X$ is a perfect set in $\R$, then $X$ is not full measure universal.
\end{enumerate}
\end{conjecture}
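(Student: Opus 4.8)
The two parts have rather different flavors, and it is worth recording at the outset that part (2) is essentially a consequence of part (1). Indeed, the Erd\H{o}s--Kunen--Mauldin theorem (Theorem~\ref{ekm}) says that every perfect $X\subset\R$ admits a Lebesgue null set $M$ with $X+M=\R$, so a perfect set is never strongly meager; if part (1) holds, such a set is then not full measure universal. Hence the heart of the matter is part (1), and within it a single implication.

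For part (1), one direction is free: if $X$ is strongly meager then $X+M\neq\R$ for every $M\in\mathcal N$, and taking $\lambda=1$ in condition (iii) of Proposition~\ref{sumset} shows that $X$ is full measure universal. The substantive direction is the converse. Unwinding Proposition~\ref{sumset}(iii), the hypothesis that $X$ is full measure universal says only that for every null $M$ there is \emph{some} nonzero $\lambda$ with $X+\lambda M\neq\R$, and one must promote this to the value $\lambda=1$ uniformly over all $M$. The plan would be to argue by contradiction: assuming some null $M_0$ has $X+M_0=\R$, manufacture from $M_0$ a single null set $M_\ast$ with $X+\lambda M_\ast=\R$ for \emph{every} $\lambda\neq 0$, contradicting full measure universality. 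The obvious candidate $M_\ast=\bigcup_{q\in\Q\setminus\{0\}} qM_0$ is null but, because the relation $X+M_0=\R$ is invariant under dilating $X$ and $M_0$ \emph{together} and not under dilating $M_0$ alone, it only produces failure along the dense set $\lambda\in\Q\setminus\{0\}$ and leaves a full scale-neighborhood of $\lambda=1$ uncovered. I expect this ``stability of a null covering under perturbation of scale'' to be the principal obstacle: it is exactly the kind of phenomenon that, in the dual (category) picture, underlies the independence of the Borel conjecture, so a resolution may well require forcing-theoretic input, or else a genuinely new combinatorial construction of $M_\ast$.

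For part (2) one can also attempt a direct argument, in the spirit of Section~\ref{Section 3}. First dispose of the easy case: a perfect set with nonempty interior contains an interval, and an interval has no affine copy inside the full-measure set $\R\setminus\Q$, so it is not full measure universal; thus one may assume $X$ is nowhere dense, i.e. a Cantor set. The target is then Conjecture~\ref{conjecture_P} for $X$, or even just its interior version --- a null set $M$ with $(X+\lambda M)^\circ\neq\varnothing$, containing a fixed interval for all $\lambda$ near $1$, which upgrades to $X+\lambda M'=\R$ for all $\lambda\neq0$ by taking countably many translated dilates. The cases where $X$ has positive Newhouse thickness or positive Hausdorff dimension are already covered in Section~\ref{Section 3} via the Newhouse gap lemma (Lemma~\ref{newhouse-gap-lemma}) and via spatially independent martingales (Theorem~\ref{shmerkin}); what remains is the delicate class of Cantor sets having both thickness and Hausdorff dimension zero. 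Here the most promising route is the probabilistic method of Kolountzakis \cite{Kolountzakis97,Kol-cantor-set}: select the defining intervals of $M$ at random with carefully tuned probabilities so that, with positive probability, $M$ is null while $X+\lambda M$ has nonempty interior. The difficulty, and the main obstacle on this side, is making the second-moment or entropy estimates uniform in the scaling parameter $\lambda$ over a neighborhood of $1$ (so that one may subsequently tile by dilates), since a Cantor set of vanishing thickness can be so porous that the relevant estimates degrade badly under rescaling.
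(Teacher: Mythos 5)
The statement you were asked about is labelled a \emph{conjecture} in the paper; the paper offers no proof of it, only the surrounding remarks (that Theorem~\ref{ekm} shows perfect sets are not strongly meager, and that (2) restates Conjecture~\ref{conjecture_P}). Your proposal, accordingly, is not a proof and does not claim to be one; what it does contain is a correct accounting of which pieces are easy, which are known, and which are genuinely open. The checkable parts are all right: strongly meager implies full measure universal by taking $\lambda=1$ in Proposition~\ref{sumset}(iii) (since $\lambda M$ is null whenever $M$ is); the forward implication of (1) together with Erd\H{o}s--Kunen--Mauldin yields (2), which is exactly the linkage the paper itself points to; a perfect set with interior is disposed of via $\R\setminus\Q$; and the positive-thickness and positive-dimension Cantor cases are covered by the paper's Section~\ref{Section 3}. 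You also correctly diagnose why $\bigcup_q qM_0$ fails to upgrade ``$X+M_0=\R$ for one null $M_0$'' to a single null witness working for all $\lambda$: the relation only dilates when $X$ and $M_0$ are scaled together. One small imprecision: a nowhere dense perfect set need not be compact, hence need not be a Cantor set; the reduction to Cantor sets should instead go through monotonicity (every perfect set contains a Cantor set, and non-universality passes to supersets), but this is cosmetic.

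The genuine gaps are the two you name, and they are the content of the conjecture itself: (a) promoting ``for each null $M$ there is \emph{some} $\lambda\neq0$ with $X+\lambda M\neq\R$'' to the fixed value $\lambda=1$ uniformly in $M$, and (b) Cantor sets with both Newhouse thickness and Hausdorff dimension zero, where neither the gap lemma nor the martingale argument applies and where uniformity in the dilation parameter is the sticking point for the probabilistic method. Your proposal identifies these obstacles but does not overcome them, so it should be read as a correct reduction and problem analysis rather than a proof; to the best of the paper's knowledge (and the current literature it surveys), both implications remain open.
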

If (1) is true, then the existence of uncountable full measure universal sets will be independent of ${\mathsf{ZFC}}$. Note that (2) is a restatement of Conjecture \ref{conjecture_P}. Theorem \ref{ekm} shows that all perfect sets are not strongly meager, but we do not know if this is the case for full measure universal sets. 



\section{A variant of the Erd\H{o}s similarity conjecture ``in the large''}\label{sec:large}

In this section, we discuss a variant of the Erd\H{o}s similarity conjecture that may be regarded as a ``dual version'' for unbounded sets. Let $0\le p<1$. A measurable set $E \subseteq \R^n$ is $p$-\textbf{large}  if for every $k \in \Z^n$, 
\begin{align*}
    m(E \cap (k + [0,1]^n)) \geq p. 
\end{align*}
For each $p$, we denote the collection of $p$-large sets by $\mathscr{R}(p)$. A set $A \subseteq \R^n$ is \textbf{universal in} $\mathscr{R}(p)$ if for each $p$-large set $E$, there exists an affine transformation mapping $A$ to a subset of $E$.

The notion of $p$-largeness first appeared in \cite{BKM23}, where L. Bradford, H. Kohut, and the third-named author showed that for each $0 \leq p < 1$, there exists a $p$-large set that does not contain any infinitely long arithmetic progression; i.e. that does not contain any sequence $(x + yn)_{n = 1}^\infty$ for any $x, y \in \R^{n}$ where $y \neq 0$. M. Kolountzakis and E. Papageorgiou, in their paper \cite{KolountzakisPapageorgiou23}, recast this result as saying that no increasing \textit{linear} sequence $a_n \to \infty$ is universal in $\mathscr{R}(p)$ for any $0 \leq p < 1$. They then ask the following question. 

\begin{question}\label{q:large-erdos}
    Does there exist an increasing sequence $a_n \to \infty$ that is universal in $\mathscr{R}(p)$ for some $0 \leq p < 1$?
\end{question}

Question \ref{q:large-erdos} and the Erd\H{o}s similarity problem are similar in spirit. The former concerns the universality of increasing sequences $a_n \to \infty$ among sets occupying an arbitrarily large proportion of Euclidean space, and the latter concerns the universality of decreasing sequences $a_n \to 0$ among sets occupying an arbitrarily small proportion of Euclidean space. For this reason, Kolountzakis and Papageorgiou describe this question as an ``Erd\H{o}s similarity problem in the large''. 

Question \ref{q:large-erdos} is also related to a substantial body of work in analysis concerning the existence of geometric structures in measurable sets $E \subseteq \R$ such that
\begin{align*}
    \delta(E) \coloneq \limsup_{r \to \infty} \frac{m\big(E \cap B(0; r)\big)}{m\big(B(0; r)\big)} > 0 \quad \text{or} \quad \delta_B(E) \coloneq \limsup_{r \to \infty} \sup_{x \in \R^n} \frac{m\big(E \cap B(x; r)\big)}{m\big(B(0; r)\big)} > 0.
\end{align*} 
We say that $E$ has positive \textbf{upper density} if $\delta(E) > 0$ and positive \textbf{upper Banach density} if $\delta_B(E)>0$. Typically, one asks if such sets must contain ``all sufficiently large copies'' of a prescribed geometric configuration. That is, given a finite set $A \subset \R^n$ and a set $E \subseteq \R^n$ with $\delta_B(E) > 0$, one asks if there exists a threshold $y_0 > 0$ such that for each $y > y_0$, there exists an affine transformation of the form $u \mapsto x + yu$ mapping $A$ to a subset of $E$. In \cite{Szekely}, L. Székely asked if every set with positive upper Banach density in $\R^2$ must contain all sufficiently large copies of $A = \{0,1\}$. This question was answered affirmatively in \cite{SzekelyBourgain, SzekelyMarstrand, SzekelyFKW, SzekelyQuas}. Since then, mathematicians have considered other variants of this problem, replacing $A$ with more general finite sets \cite{LM2, LM5, LM3, DG1, LM4, LM1}, equipping $\R^n$ with more general metrics \cite{Sz-Kolountzakis, CMP, DK3, DK1, DK2}, or seeking quantitative bounds on $\delta(E)$ and $\delta_B(E)$ \cite{FKY}. Question \ref{q:large-erdos} extends this line of inquiry to \textit{infinite} sets $A$. We make this precise in Section \ref{sec:connections}. 

In what follows, we survey current progress on Question \ref{q:large-erdos}. For simplicity, we restrict our attention to the one-dimensional setting. 

\subsection{Current progress on Question \ref{q:large-erdos}}\label{sec:results-large}

The first partial answer to Question \ref{q:large-erdos} is the aforementioned theorem from \cite{BKM23}. We restate it here for completeness. 

\begin{theorem}[Bradford--Kohut--Mooroogen]\label{th:bkm}
    Let $a_n \to \infty$ be an increasing sequence. If $(a_n)_{n = 1}^\infty$ is linear, then for each $0 \leq p < 1$ there exists a $p$-large set containing no affine copy of $(a_n)_{n = 1}^\infty$. 
\end{theorem}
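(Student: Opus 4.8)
The plan is to reduce the statement to a combinatorial fact about arithmetic progressions and then write down the required set explicitly. Since $(a_n)_{n=1}^\infty=(x+yn)_{n=1}^\infty$ with $y\neq 0$, every affine copy $\lambda A+t$ (with $\lambda\neq 0$) equals an infinite arithmetic progression $\{c+kd:k\geq 0\}$ of nonzero common difference $d=\lambda y$, and conversely every such progression arises this way (take $\lambda=d/y$, $t=c-\lambda x-d$). So it suffices to produce, for each $0\leq p<1$, a measurable $E\subseteq\R$ with $m\big(E\cap(k+[0,1])\big)\geq p$ for every $k\in\Z$ that contains no infinite arithmetic progression of nonzero common difference.

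The idea is to take $E$ to be the complement of two \emph{thickened lattices} with incommensurable spacings. Set $\varepsilon=(1-p)/4$ and, for $\eta>0$, write $T_\eta=\bigcup_{m\in\Z}(m\eta-\varepsilon,\,m\eta+\varepsilon)$. Choose $\eta_1=2$ and $\eta_2=2\sqrt2$ (any two spacings that are $\geq 2$ with irrational ratio will do) and put $E=\R\setminus(T_{\eta_1}\cup T_{\eta_2})$. To see $E$ is $p$-large, note that since $\eta_i\geq 2>1+2\varepsilon$, at most one of the intervals $(m\eta_i-\varepsilon,m\eta_i+\varepsilon)$, $m\in\Z$, can meet a given unit cell $k+[0,1]$ (two consecutive centres are $\eta_i$ apart, more than the length $1+2\varepsilon$ of the enlarged cell $(k-\varepsilon,k+1+\varepsilon)$); hence $m\big(T_{\eta_i}\cap(k+[0,1])\big)\leq 2\varepsilon$, and by subadditivity $m\big((T_{\eta_1}\cup T_{\eta_2})\cap(k+[0,1])\big)\leq 4\varepsilon=1-p$, so $m\big(E\cap(k+[0,1])\big)\geq p$ for every $k\in\Z$.

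Next I would show $E$ contains no infinite arithmetic progression. Fix one, $\{c+kd:k\geq 0\}$ with $d\neq 0$. If $d/\eta_1$ and $d/\eta_2$ were both rational, then $\eta_2/\eta_1=(d/\eta_1)/(d/\eta_2)$ would be rational, contradicting our choice; so $d/\eta_i$ is irrational for some $i$. By Weyl's equidistribution theorem the sequence $\big((c+kd)/\eta_i\big)_{k\geq 0}$ is equidistributed, hence dense, modulo $1$, so there is $k_0\geq 0$ with $(c+k_0 d)/\eta_i$ within distance $\varepsilon/\eta_i$ of some integer $m$, that is, $c+k_0 d\in(m\eta_i-\varepsilon,m\eta_i+\varepsilon)\subseteq T_{\eta_i}\subseteq\R\setminus E$. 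Thus the progression is not contained in $E$; since it was an arbitrary affine copy of $A$, the set $E$ witnesses the theorem.

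Finding the construction is really the only step; after that everything is routine equidistribution and measure bookkeeping, and there is no deep obstacle. The one place that rewards a little care is the per-cell measure estimate, which is why I would take both spacings to be $\geq 2$ rather than, say, $1$ and $\sqrt 2$: this forces at most one thickened interval into each unit cell and makes the bound $2\varepsilon$ uniform in $k$. A heavier but more flexible alternative, which one would reach for if the forbidden configuration were only an approximate progression or a more general infinite set rather than an honest arithmetic progression, is to tile $\R$ into long blocks and install in each block a finite ``barrier'' of density at most $1-p$ that kills every progression whose common difference lies in a prescribed compact range; for genuinely linear $(a_n)$ the two-lattice trick makes this unnecessary.
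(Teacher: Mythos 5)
Your proof is correct. Every affine copy of a linear sequence is indeed an infinite arithmetic progression with nonzero common difference, your two-lattice set is measurable and $p$-large by the per-cell count you give (at most one thickened interval of each lattice meets the enlarged cell $(k-\varepsilon,k+1+\varepsilon)$ because $\eta_i\ge 2>1+2\varepsilon$), and the incommensurability of $\eta_1,\eta_2$ guarantees that any common difference $d\neq 0$ is irrational relative to at least one lattice, after which Weyl equidistribution forces the progression into that lattice's forbidden neighbourhood. This is a genuinely different route from the proof the paper presents for Theorem \ref{th:bkm}, which partitions each unit cell into quarters, deletes intervals according to a digit sequence with ever-longer repetitions, and splits into cases according to whether the dilation parameter is rational or irrational --- the irrational case handled by density modulo $1$ and the rational case by the growing gaps between surviving intervals in a fixed residue class. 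Your construction avoids that case split entirely. It is worth noting, though, that your idea is essentially the one the paper itself records later as Lemma \ref{lem:construction-lemma-gmy24}: your set is (up to an affine change of variables) of the form $D\cap yD$ with $D$ the complement of a $1$-periodic thickened lattice and $y=\sqrt{2}$, and the role of incommensurability is exactly the condition $y\notin\mathbf{E}\mathbf{E}^{-1}=\Q$ for the linear sequence; the paper explicitly remarks that this yields a one-line proof of Theorem \ref{th:bkm}. What the paper's block construction buys in exchange for its extra length is flexibility: the ``growing repetition'' mechanism is what generalizes (via Lemma \ref{lem:construction2}) to kill all dilations lying in an arbitrary countable exceptional set, not just the rationals, whereas your two-lattice trick is tailored to progressions.
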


The main result of Kolountzakis and Papageorgiou's paper \cite{KolountzakisPapageorgiou23} extends the above theorem to sequences of \textbf{subexponential} growth; i.e. where $\log a_n = o(n)$. Roughly speaking, the faster the growth of a sequence, the more spread out its points (as $n \to \infty$), and therefore the harder it is to construct a set that omits a point of that sequence. Said differently, the points of a slower sequence are less ``sparsely distributed'', so it is easier to show that at least of them escapes our set.

\begin{theorem}[Kolountzakis--Papageorgiou]\label{th:kolountzakispapageorgiou23}
    Let $a_n \to \infty$ be an increasing sequence. If $\inf(a_{n+1} - a_n) \geq 1$ and $\log{a_n} = o(n)$, then for each $0 \leq p < 1$ there exists a $p$-large set containing no affine copy of $(a_n)_{n = 1}^\infty$. 
\end{theorem}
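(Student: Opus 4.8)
The plan is to follow Kolountzakis's probabilistic method. Fix $p \in [0,1)$ and set $\epsilon = 1-p$. It suffices to construct an open set $V \subseteq \R$ with $m\big(V\cap[t,t+1]\big) < \epsilon$ for every $t \in \Z$ and with $(x+yA)\cap V \neq\varnothing$ for all $x\in\R$ and all $y\neq 0$; then $E := \R\setminus V$ is a closed $p$-large set containing no affine copy of $A$. Equivalently, writing $V$ as a union of open intervals, one wants a ``sparse'' open set whose translates $\{V - ya_n : n \ge 1\}$ cover $\R$ for every slope $y\neq 0$. I would build $V$ at many scales. For $j \in \N$ and $k \in \Z$, let $B_{j,k}$ be a random $T_{j,k}$-periodic union of open arcs, designed to ``catch'' every copy $x+yA$ whose slope lies in the dyadic band $[2^k,2^{k+1})$, with reliability increasing in $j$; a mirror family $B_{j,k}^-$ handles $y\in(-2^{k+1},-2^k]$. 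Concretely, partition one period $[0,T_{j,k})$ into consecutive blocks of length $L_{j,k}:=\min\{1,2^k\}$, and in each block place one open arc of length $\ell_{j,k}:=\rho_{j,k}L_{j,k}$ at a uniformly random position, cyclically within the block, independently over all blocks and all $(j,k)$; here $\rho_{j,k}:=\epsilon\,2^{-j-|k|-5}$ is the density of $B_{j,k}$. Let $W_{j,k}:=\{M_{j,k},\dots,2M_{j,k}-1\}$ be a window of indices, and take $T_{j,k}$ to be the least multiple of $L_{j,k}$ exceeding $2^{k+2}a_{2M_{j,k}}$, where $M_{j,k}\in\N$ will be chosen very large in terms of $j,k,p$. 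By subadditivity and the choice of $\rho_{j,k}$, for $V:=\bigcup_{j,k}(B_{j,k}\cup B_{j,k}^-)$ one has $m\big(V\cap[t,t+1]\big)\le 6\sum_{j,k}\rho_{j,k}<\epsilon$ for every $t\in\Z$, so the density condition holds surely.

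The heart of the matter is the claim that, if $M_{j,k}$ is large enough,
\[
\Pr\big[\,\exists\,x\in\R,\ \exists\,y\in[2^k,2^{k+1}) : (x+yA)\cap B_{j,k}=\varnothing\,\big] < 2^{-j-|k|}.
\]
By $T_{j,k}$-periodicity this event depends on $x$ only through $x\bmod T_{j,k}$, so it suffices to work on the bounded box $Q_{j,k}:=[0,T_{j,k})\times[2^k,2^{k+1})$. The set of $(x,y)\in Q_{j,k}$ for which $x+ya_n\in B_{j,k}$ for some $n\in W_{j,k}$ is open; hence, fixing a finite $\rho$-net of $Q_{j,k}$ with $\rho$ much smaller than $\ell_{j,k}/(1+a_{2M_{j,k}})$, one gets: with probability one, as soon as every net point $(x_i,y_i)$ has some $x_i+y_ia_n$ ($n\in W_{j,k}$) in the \emph{interior} of an arc of $B_{j,k}$ --- which forces that same index to work throughout the $\rho$-ball around $(x_i,y_i)$ --- the whole box is caught. (A given net point lands on the boundary of an arc with probability $0$.) For a fixed net point, the gap hypothesis $\inf(a_{n+1}-a_n)\ge 1$ yields $ya_{n+1}-ya_n\ge y\ge 2^k\ge L_{j,k}$, and our choice of $T_{j,k}$ (larger than twice the span of $\{ya_n:n\in W_{j,k}\}$) guarantees that, even modulo $T_{j,k}$, the $M_{j,k}$ points $\{x_i+y_ia_n:n\in W_{j,k}\}$ lie in pairwise distinct blocks. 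Thus the events ``$x_i+y_ia_n\in B_{j,k}$'' are independent, each of probability $\rho_{j,k}$, so
\[
\Pr\big[\,x_i+y_ia_n\notin B_{j,k}\ \text{for all}\ n\in W_{j,k}\,\big] = (1-\rho_{j,k})^{M_{j,k}} \le e^{-\rho_{j,k}M_{j,k}}.
\]
The $\rho$-net has cardinality $\lesssim T_{j,k}\,2^k/\rho^2 \lesssim a_{2M_{j,k}}^{3}\,2^{C(j+|k|)}$ for a constant $C=C(p)$, so the displayed probability bound follows once $\rho_{j,k}M_{j,k}$ exceeds $3\log a_{2M_{j,k}}+C'(j+|k|)$. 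As $\rho_{j,k}$ is a fixed positive number and $\log a_{2M}=o(M)$ by hypothesis, this holds for all large $M_{j,k}$, and that is exactly how $M_{j,k}$ is selected.

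Granting the claim, the proof ends with Borel--Cantelli: $\sum_{j,k}2^{-j-|k|}<\infty$ (and similarly for $B_{j,k}^-$), so with probability one only finitely many pairs $(j,k)$ fail, whence for every $k$ some $B_{j,k}$ catches all copies of slope in $[2^k,2^{k+1})$ and some $B_{j,k}^-$ catches all copies of slope in $(-2^{k+1},-2^k]$. Since every nonzero $y$ lies in one such band, with probability one $(x+yA)\cap V\neq\varnothing$ for all $x$ and all $y\neq 0$; combined with the deterministic density bound, some realization produces an open $V$ with both properties, and $E:=\R\setminus V$ is the required $p$-large set. The main obstacle is the penultimate step, namely arranging that the window $W_{j,k}$ be long enough for the union bound over the $\rho$-net to close: this is precisely where \emph{both} hypotheses intervene --- $\inf(a_{n+1}-a_n)\ge 1$ is what sends the window points into distinct blocks and yields genuine independence (hence the clean bound $e^{-\rho_{j,k}M_{j,k}}$), and $\log a_n=o(n)$ is what makes the needed lower bound $\rho_{j,k}M_{j,k}\gtrsim \log a_{2M_{j,k}}$ achievable at all. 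The remaining work is bookkeeping: meeting the window, period, net-spacing, and summable-density constraints simultaneously while keeping everything uniform over the unbounded range of slopes, which the dyadic decomposition in $k$ and the reduction of $x$ modulo $T_{j,k}$ are designed to handle.
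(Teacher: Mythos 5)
The survey itself does not prove this theorem: it only cites \cite{KolountzakisPapageorgiou23} and remarks that the proof there is probabilistic, in the spirit of Kolountzakis's 1997 method. Your sketch is an implementation of exactly that method --- a random periodic union of arcs at each dyadic slope scale, independence of the events $x_i+y_ia_n\in B_{j,k}$ forced by the gap hypothesis (which places the window points in distinct blocks), and a union bound over an $(x,y)$-net whose cardinality is polynomial in $a_{2M_{j,k}}$, which is precisely where $\log a_n=o(n)$ enters --- so it matches the intended argument both in architecture and in where each hypothesis is used. The one step to tighten is the discretization: a net point having some $x_i+y_ia_n$ merely \emph{in the interior} of an arc does not propagate catching to the whole $\rho$-ball (and the remark that the boundary is hit with probability zero does not address this); you need that point to lie in the arc with margin at least $\rho\,(1+a_{2M_{j,k}})$, an event whose probability per block is still at least $\rho_{j,k}/2$ thanks to your choice $\rho\ll \ell_{j,k}/(1+a_{2M_{j,k}})$, after which the bound $(1-\rho_{j,k}/2)^{M_{j,k}}\le e^{-\rho_{j,k}M_{j,k}/2}$ closes the argument exactly as you describe.
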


Kolountzakis and Papageorgiou's proof is probabilistic. It uses ideas akin to those first developed by Kolountzakis to study the Erd\H{o}s similarity conjecture \cite{Kolountzakis97}.

Theorem \ref{th:kolountzakispapageorgiou23} is applicable to a number of natural sequences. For example, we can take $(P(n))_{n=1}^\infty$ where $P$ is a monic polynomial. More generally, if $(a_n)_{n=1}^\infty$ is \textbf{sublacunary} (i.e. $a_{n+1}/a_n \to 1$) then it is subexponential, and we obtain the following corollary, which is analogous to Theorem \ref{th:eigen-falconer} on the Erd\H{o}s similarity conjecture. 

\begin{corollary}\label{th:falconer-in-the-large}
    Let $a_n \to \infty$ be an increasing sequence. If $a_{n+1}/a_n \to 1$, then for each $0 \leq p < 1$ there exists a $p$-large set containing no affine copy of $(a_n)_{n =1}^\infty$.
\end{corollary}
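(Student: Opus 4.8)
The plan is to deduce Corollary~\ref{th:falconer-in-the-large} directly from Theorem~\ref{th:kolountzakispapageorgiou23}, so that the only real work is a bookkeeping reduction: showing that the hypotheses of a sublacunary sequence can be massaged into the hypotheses $\inf_n(a_{n+1}-a_n)\ge 1$ and $\log a_n = o(n)$ appearing in that theorem. First I would observe that the conclusion we want is unchanged under passing to an affine copy of $(a_n)$: if $E$ is a $p$-large set avoiding every affine copy of $(\alpha a_n + \beta)$, then since an affine copy of $(a_n)$ is an affine copy of $(\alpha a_n+\beta)$ and conversely, $E$ also avoids every affine copy of $(a_n)$. Likewise the property $a_{n+1}/a_n\to 1$ is preserved under dilation (and, since $a_n\to\infty$, under translation as well, because $(\alpha a_n+\beta)/(\alpha a_{n-1}+\beta)\to 1$). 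So without loss of generality I may rescale $(a_n)$ freely.

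Next I would handle the separation hypothesis $\inf_n(a_{n+1}-a_n)\ge 1$. This is \emph{not} automatic for a sublacunary sequence, since consecutive gaps could shrink to zero (e.g.\ $a_n = \log n$ has $a_{n+1}-a_n\to 0$ while $a_{n+1}/a_n\to 1$). However, I only need a sequence that fails to be universal in $\mathscr{R}(p)$, and universality is inherited by subsets: if $A'\subseteq A$ and $E$ contains no affine copy of $A'$, then $E$ contains no affine copy of $A$. So the strategy is to pass to a \emph{subsequence} $(a_{n_k})_k$ that is still sublacunary but whose consecutive gaps are uniformly bounded below, apply the theorem to the subsequence, and conclude for the full sequence. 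To construct such a subsequence, I would greedily pick $n_1=1$ and, having chosen $n_k$, let $n_{k+1}$ be the least index with $a_{n_{k+1}} \ge a_{n_k}+1$; since $a_n\to\infty$ this is well-defined, and by minimality $a_{n_{k+1}-1} < a_{n_k}+1$, which combined with $a_{n_{k+1}}/a_{n_{k+1}-1}\to 1$ forces $a_{n_{k+1}}/a_{n_k}\to 1$ as well. By construction $a_{n_{k+1}}-a_{n_k}\ge 1$ for all $k$. This deals with the separation condition.

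Finally, the growth condition $\log a_{n_k} = o(k)$ for the subsequence is immediate once we know $a_{n_{k+1}}/a_{n_k}\to 1$: writing $\log a_{n_k} = \log a_{n_1} + \sum_{j=1}^{k-1}\log(a_{n_{j+1}}/a_{n_j})$, each summand tends to $0$, so the Ces\`aro average $\frac{1}{k}\log a_{n_k}\to 0$, i.e.\ $\log a_{n_k}=o(k)$. Now Theorem~\ref{th:kolountzakispapageorgiou23} applies to $(a_{n_k})_k$: for each $0\le p<1$ there is a $p$-large set $E$ containing no affine copy of $(a_{n_k})_k$, hence no affine copy of the superset $(a_n)_n$.

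\begin{proof}[Proof of Corollary~\ref{th:falconer-in-the-large}]
Since any affine copy of a subset of $(a_n)_{n=1}^\infty$ is a subset of an affine copy of $(a_n)_{n=1}^\infty$, it suffices to produce a subsequence $(a_{n_k})_{k=1}^\infty$ that satisfies the hypotheses of Theorem~\ref{th:kolountzakispapageorgiou23}; the $p$-large set avoiding all affine copies of the subsequence then avoids all affine copies of $(a_n)_{n=1}^\infty$.

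Define $n_1=1$ and, inductively, let $n_{k+1}$ be the least integer $n>n_k$ with $a_n \ge a_{n_k}+1$; this exists because $a_n\to\infty$. Then $a_{n_{k+1}}-a_{n_k}\ge 1$ for all $k$, so $\inf_k(a_{n_{k+1}}-a_{n_k})\ge 1$. By minimality of $n_{k+1}$ we have $a_{n_{k+1}-1} < a_{n_k}+1 \le a_{n_{k+1}}$, so
\begin{equation*}
1 \le \frac{a_{n_{k+1}}}{a_{n_k}} \le \frac{a_{n_{k+1}}}{a_{n_{k+1}-1}}\cdot\frac{a_{n_{k+1}-1}}{a_{n_k}} < \frac{a_{n_{k+1}}}{a_{n_{k+1}-1}}\cdot\frac{a_{n_k}+1}{a_{n_k}}.
\end{equation*}
Both factors on the right tend to $1$ as $k\to\infty$ (the first by the hypothesis $a_{m+1}/a_m\to 1$, the second since $a_{n_k}\to\infty$), so $a_{n_{k+1}}/a_{n_k}\to 1$. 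Consequently, writing
\begin{equation*}
\frac{1}{k}\log a_{n_k} = \frac{1}{k}\log a_{n_1} + \frac{1}{k}\sum_{j=1}^{k-1}\log\frac{a_{n_{j+1}}}{a_{n_j}},
\end{equation*}
and noting each term $\log(a_{n_{j+1}}/a_{n_j})\to 0$, the Ces\`aro mean on the right tends to $0$; hence $\log a_{n_k} = o(k)$. Thus $(a_{n_k})_{k=1}^\infty$ is an increasing sequence tending to $\infty$ with $\inf_k(a_{n_{k+1}}-a_{n_k})\ge 1$ and $\log a_{n_k}=o(k)$, so Theorem~\ref{th:kolountzakispapageorgiou23} yields, for each $0\le p<1$, a $p$-large set containing no affine copy of $(a_{n_k})_{k=1}^\infty$, and therefore none of $(a_n)_{n=1}^\infty$.
\end{proof}

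The main obstacle, such as it is, is recognizing that a sublacunary sequence need not itself obey the uniform gap condition $\inf_n(a_{n+1}-a_n)\ge 1$, and that this is circumvented by passing to a subsequence (legitimate because non-universality is inherited by supersets) rather than by an affine change of variables. Everything else is a routine limit computation.
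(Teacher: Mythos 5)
Your proof is correct and follows the same route as the paper, which obtains the corollary from Theorem~\ref{th:kolountzakispapageorgiou23} via the observation that sublacunary sequences are subexponential (exactly the Ces\`aro argument you give); the paper also records an alternative derivation from Theorem~\ref{thm:packing} together with Boshernitzan's bound $\operatorname{dim}_P(\mathbf{E})=0$. You are right that the separation hypothesis $\inf_n(a_{n+1}-a_n)\ge 1$ is not automatic for sublacunary sequences and that the paper's one-line deduction glosses over it; your greedy subsequence extraction, combined with the fact that non-universality passes from a subsequence to the full sequence, is a valid way to supply that missing step (modulo the trivial caveat that one should begin the subsequence at an index where $a_n>0$ so the ratios and logarithms are defined).
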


It is currently not known if an arbitrary sequence of exponential growth can be universal in $\mathscr{R}(p)$ for any $0 \leq p < 1$. In particular, the following question is open. 

\begin{question}\label{q:large-erdos-2^n}
    Is the sequence $(2^n)_{n = 1}^\infty$ universal in $\mathscr{R}(p)$ for some $0 \leq p < 1$?
\end{question}

The most recent work on Question \ref{q:large-erdos} is due to X. Gao, the third-named author, and C.H. Yip. In \cite{GMY24}, they prove new sufficient conditions for sequences to be non-universal in $\mathscr{R}(p)$. Their work uses techniques from metric number theory. 

Recall that each $x \in \R$ can be expressed as $x = \lfloor x \rfloor + \langle x\rangle$, where $\lfloor x \rfloor$ is the largest integer no greater than $x$. We call $\lfloor x \rfloor$ the \textbf{integer part} and $\langle x \rangle$ the \textbf{fractional part} of $x$. A set $A \subseteq \R$ is \textbf{dense modulo $1$} if $\{\langle x \rangle : x \in A\}$ dense in $[0,1]$. Slightly abusing terminology, we say that a sequence is dense modulo $1$ if the set of its elements is dense modulo $1$.

\begin{theorem}[Gao--Mooroogen--Yip]\label{thm:packing}
     Let $a_n \to \infty$ be an increasing sequence. If the packing dimension of the set 
     \begin{align}\label{eq:gmy-except}
         \mathbf{E} = \{y \in \mathbb{R}: (ya_n)_{n=1}^\infty \text{ is not dense modulo } 1\}
     \end{align}
     satisfies $\operatorname{dim}_P(\mathbf{E}) < 1/2$, then for each $0 \leq p < 1$ there exists a $p$-large set containing no affine copy of $(a_n)_{n =1}^\infty$.
\end{theorem}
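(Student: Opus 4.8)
\textbf{Strategy.} The plan is to run a probabilistic construction analogous to the one used for Theorem \ref{th:kolountzakispapageorgiou23}, but where the role played there by the ``subexponential spacing" of the sequence is replaced by a quantitative equidistribution input coming from the smallness of the exceptional set $\mathbf{E}$. Fix $0 \le p < 1$ and pick $q$ with $p < q < 1$; we will build a random set $E$ that is $p$-large with high probability and, again with high probability, avoids every affine copy of $(a_n)$. Since any affine image $x + y(a_n)$ with $y \ne 0$ can be rescaled so that, modulo the integer lattice, it is governed by the fractional parts $\langle y a_n + x\rangle$, it suffices to defeat all pairs $(x,y)$ with $y$ in a fixed bounded range, say $y \in [1,2]$ (the usual reduction: dilating $E$ by powers of $2$ and translating by integers reduces the general case to this window, and a countable union of measure-zero-probability bad events stays small). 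For such $y \notin \mathbf{E}$, the sequence $(y a_n)$ is dense mod $1$, so for any target interval the sequence visits it infinitely often; the first task is to upgrade this to a \emph{uniform, quantitative} statement over a suitable compact set of $y$'s, and this is where the packing dimension hypothesis enters.

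\textbf{Key steps.} First, I would cover $[1,2] \setminus \mathbf{E}$ (up to a set we can afford to throw away, using $\dim_P(\mathbf{E}) < 1/2 < 1$, so in particular $\mathbf{E}$ has measure zero) by finitely many small subintervals on each of which the orbits $(y a_n)$ are \emph{simultaneously} well-distributed: precisely, I want a length scale $\ell$ and an index window $[N, 2N]$ such that, for \emph{every} $y$ in the subinterval, the points $\{\langle y a_n\rangle : N \le n \le 2N\}$ are $\ell$-dense in $[0,1]$ with a controlled multiplicity, and crucially with a controlled \emph{number} of distinct residues so that conditioning does not cost too much entropy. The packing dimension bound $\dim_P(\mathbf{E}) < 1/2$ is exactly the kind of hypothesis that, via a covering/Borel–Cantelli argument against a Weyl-sum or discrepancy estimate, forces the ``good" behaviour to occur on scales compatible with the combinatorics below; morally it says the set of $y$ for which equidistribution fails at scale $2^{-k}$ has, for most $k$, a $2^{-k}$-covering number $o(2^{k/2})$, which is small enough to be absorbed. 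Second, with this uniform well-distribution in hand, I would choose the random set: partition $[0,1]$ (and hence $\R$ by periodicity) into intervals of length $\ell$ and select each independently to lie in $E$ with probability $q$; then for a fixed $(x,y)$ with $y$ in one of the good subintervals, the events ``$\langle y a_n + x\rangle$ lands in a \emph{non}-selected interval" for $n$ ranging over the window are, after the equidistribution step, spread across $\gtrsim 1/\ell$ essentially independent coordinates, each failing with probability $1-q$; so the probability that $x + y(a_n)$ is entirely inside $E$ is at most $q^{c/\ell}$, which we can drive below any prescribed threshold by taking $\ell$ small. Third, I would discretize the parameters $(x,y)$ at a scale fine enough (using that the construction is stable under small perturbations of $x,y$ — a Lipschitz-type argument, since moving $y$ slightly moves each of finitely many relevant points $y a_n$ by a controlled amount) so that a union bound over the discretization still leaves the total bad probability $< 1$; this, combined with a concentration estimate showing $E$ is $p$-large with probability close to $1$, yields a single deterministic $E$ that works. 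Finally, the translation/dilation reduction assembles these windowed constructions over $n \to \infty$ into one set that avoids \emph{all} affine copies, not just those in a fixed index window.

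\textbf{Main obstacle.} The genuinely hard step is the \emph{uniform quantitative equidistribution} over $y$, Step one: pointwise density mod $1$ (the defining property of $y \notin \mathbf{E}$) is far weaker than what the probabilistic argument needs, which is a rate — a discrepancy or a lower bound on how many of the $a_n$ in a dyadic block have fractional part of $y a_n$ in a given small interval, \emph{holding for a positive-measure (indeed co-$\mathbf{E}$) set of $y$ simultaneously}. Extracting such a rate from a mere packing-dimension bound on the exceptional set is the crux, and I expect it to go through a three-way interplay: (i) a Weyl/van der Corput estimate bounding exponential sums $\sum_{N \le n \le 2N} e(k y a_n)$ in terms of the additive structure of $(a_n)$, (ii) a Chebyshev/covering argument converting the failure of these bounds into a small-dimensional set of $y$'s, and (iii) matching the dimension threshold $1/2$ to the square-root cancellation one generically expects in (i). Making (i)–(iii) fit together cleanly — in particular getting the scales $\ell$, $N$, and the discretization of $y$ to be mutually compatible — is where the real work lies; everything downstream is the by-now-standard Kolountzakis-type probabilistic bookkeeping.
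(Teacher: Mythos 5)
There is a genuine gap, and it sits exactly where the theorem's content lies. Your construction covers $[1,2]\setminus\mathbf{E}$ and treats $\mathbf{E}$ as ``a set we can afford to throw away'' because it has measure zero. But $\mathbf{E}$ is precisely the set you cannot throw away: for $y\notin\mathbf{E}$ the conclusion is essentially trivial (the fixed deterministic set $\{x:0\le\langle x\rangle<p\}$ already avoids $(x+ya_n)$ for every such $y$, as in Theorem \ref{th:almostlarge} --- no probability, no quantitative equidistribution, no discretization needed), whereas for $y\in\mathbf{E}$ the sequence $(ya_n)$ is \emph{not} dense mod $1$ and your entire equidistribution-based mechanism gives nothing. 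Since $\dim_P(\mathbf{E})$ may be any value below $1/2$, $\mathbf{E}$ can be uncountable, so the countable-parameter trick (Lemma \ref{lem:construction2}) is unavailable, and a union bound over a measure-zero but uncountable set of dilations is not a union bound at all. Your proposal contains no device for these dilations. Separately, the ``main obstacle'' you identify --- extracting a uniform discrepancy rate for $y\notin\mathbf{E}$ from the packing-dimension bound --- is not just hard but unobtainable in the form you describe: the hypothesis constrains only the limit set where density fails, not the set where density fails by time $N$ at scale $\ell$ (which for a general increasing sequence, with no Weyl/van der Corput input available, can be all of $[1,2]$ for every fixed $N$). Your guess that the threshold $1/2$ reflects square-root cancellation in exponential sums is also not what is going on.

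The paper's actual route is deterministic and short. Lemma \ref{lem:construction-lemma-gmy24} shows that if the quotient set $\mathbf{E}\mathbf{E}^{-1}$ is not all of $\R$, one may pick $y\notin\mathbf{E}\mathbf{E}^{-1}$, $y>1$, and take $E=D\cap yD$ with $D=\{x:0\le\langle x\rangle<1-p'\}$: dilations $z\notin\mathbf{E}$ are defeated by $D$ alone via density mod $1$, while if some $(x+za_n)_{n=1}^\infty\subset E$ with $z\in\mathbf{E}\setminus\{0\}$, then containment in $yD$ forces $zy^{-1}\in\mathbf{E}$ as well, whence $y=z/(zy^{-1})\in\mathbf{E}\mathbf{E}^{-1}$, a contradiction; taking $p'\to1$ makes $E$ $p$-large. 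The hypothesis $\dim_P(\mathbf{E})<1/2$ enters only to guarantee $\mathbf{E}\mathbf{E}^{-1}\ne\R$, via the standard estimate $\dim_H(\mathbf{E}\times\mathbf{E})\le\dim_H(\mathbf{E})+\dim_P(\mathbf{E})\le 2\dim_P(\mathbf{E})<1$ and the fact that $\mathbf{E}\mathbf{E}^{-1}$ is a countable union of Lipschitz images of pieces of $\mathbf{E}\times\mathbf{E}$. That is the true source of the exponent $1/2$.
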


The set in \eqref{eq:gmy-except} is a well-studied object in metric number theory. There are several results in this field that quantify the size  of $\mathbf{E}$ under various assumptions on $(a_n)_{n=1}^\infty$. We summarize briefly as follows:

\begin{enumerate}
    \item If $(a_n)_{n=1}^\infty$ is sublacunary, M. Boshernitzan  shows in \cite{Boshernitzan94} that $\operatorname{dim}_P(\mathbf{E}) = 0$. Boshernitzan states his result in terms of Hausdorff dimension, but his proof yields this stronger estimate  (See \cite[Theorem B]{GMY24} for details).   Applying this result to Theorem \ref{thm:packing} gives a new proof that sublacunary sequences are not universal in $\mathscr{R}(p)$ for $0 \leq p < 1$ (Corollary \ref{th:falconer-in-the-large}).
    
    \medskip
    
    \item  A. Pollington and B. de Mathan \cite{Pollington79, Mathan80} showed that $\operatorname{dim}_P(\mathbf{E}) = 1$ for any lacunary sequence (i.e. any sequence such that $a_{n+1}/a_n \to 1 + \eta$ for some $\eta > 0$) so Theorem \ref{thm:packing} cannot be applied to deduce the non-universality of such sequences. 
\end{enumerate}

Note however that there exist sequences that are neither lacunary nor sublacunary, so the above result may be applied to some of those cases. The next corollary furnishes some examples.

\begin{corollary}\label{thm:b-dense}
Let $a_n \to \infty$ be an increasing sequence. If $D = \{\lfloor a_n \rfloor : n \in \Z\}$ has positive upper Banach density; i.e.
\begin{align}\label{eq:gmy-dense}
    \limsup_{n \to \infty} \max_{h \in \N} \frac{\#D \cap\{h + 1, h + 2, \ldots, h + n\}}{n} > 0,
\end{align} 
then the associated set $\mathbf{E}$ defined as in \eqref{eq:gmy-except} is countable and hence it has packing dimension zero. Consequently, for each $0 \leq p < 1$, there exists a $p$-large set containing no affine copy of $(a_n)_{n=1}^\infty$.
\end{corollary}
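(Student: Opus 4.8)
The plan is to reduce the statement to the single assertion that $\mathbf{E}$ is countable, and then to prove that assertion. Since a countable subset of $\R$ has packing dimension $0$, once $\mathbf{E}$ is known to be countable we get $\operatorname{dim}_P(\mathbf{E}) = 0 < 1/2$, and Theorem~\ref{thm:packing} immediately yields, for every $0 \le p < 1$, a $p$-large set containing no affine copy of $(a_n)_{n=1}^\infty$. Thus the whole problem becomes: if $D = \{\lfloor a_n\rfloor : n\}$ has positive upper Banach density $c>0$, then the set $\mathbf{E}$ of $\eqref{eq:gmy-except}$ is countable.

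To prove this I would stratify $\mathbf{E}$ by the length of the omitted arc: for $k \in \N$ set
$$
\mathbf{E}_k = \bigl\{ y \in \R : \{\langle y a_n\rangle : n \ge 1\}\ \text{misses some open arc of}\ \R/\Z\ \text{of length}\ 1/k \bigr\},
$$
so that $\mathbf{E} = \bigcup_{k\ge 1}\mathbf{E}_k$ and it suffices to show each $\mathbf{E}_k$ is countable. Since a finite-dimensional $\Q$-linear subspace of $\R$ is countable, it is enough to show that $\mathbf{E}_k$ contains no $\Q$-linearly independent set (independent together with $1$) of some size $N = N(k,c)$. The role of the hypothesis is isolated in an elementary lemma: if $D$ has positive upper Banach density, then $(a_n)$ contains, arbitrarily far from the origin, arbitrarily long \emph{almost sublacunary} blocks, i.e.\ finite strings $a_p < a_{p+1} < \dots < a_{p+M}$ with $M \to \infty$ and $a_{p+M}/a_p \to 1$; concretely, along such a block the integers $\lfloor a_n\rfloor$ run through at least $c'M$ integers inside an interval $[h,h+M]$ with $h \gg M$.

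Granting this, suppose for contradiction that $y_1,\dots,y_N \in \mathbf{E}_k$ are $\Q$-independent together with $1$, and pick arcs $J_i$ of length $1/k$ with $\langle y_i a_n\rangle \notin J_i$ for all $n$; then the tuple $(\langle y_1 a_n\rangle,\dots,\langle y_N a_n\rangle)$ avoids the box $J_1\times\dots\times J_N \subset (\R/\Z)^N$ for \emph{every} $n$. One then runs a Weyl-type equidistribution/discrepancy argument in the spirit of Boshernitzan's proof \cite{Boshernitzan94}: because $1,y_1,\dots,y_N$ are $\Q$-independent, the points $(\langle y_i j\rangle)_i$ with $j$ ranging over a long far-away interval $[h,h+M]$ are nearly equidistributed on $(\R/\Z)^N$, with discrepancy bounds uniform in the position of the interval, and one would like to conclude that along one such almost sublacunary block the sequence $(\langle y_i a_n\rangle)_i$ must enter the box $J_1\times\dots\times J_N$, the desired contradiction. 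I expect the last step to be the main obstacle. The set $D$ is prescribed in advance and need not meet the Bohr-type boxes coming from the $J_i$ in the ``expected'' proportion, and the fractional parts $\langle a_n\rangle$ contribute $n$-dependent shifts that must be absorbed; making the count robust both against the fixed restriction $n\mapsto\lfloor a_n\rfloor\in D$ and against these perturbations, while keeping the exceptional set genuinely \emph{countable} rather than merely of measure zero or of dimension zero, is the heart of the matter — and is exactly where positive upper Banach density, rather than a weaker condition such as $D-D$ being syndetic, has to be used. (Alternatively, if a result from metric number theory is available that directly states that positive upper Banach density of $D$ forces the $y$-exceptional set of $(y d)_{d \in D}$ to be countable, one could quote it and then verify separately that passing from $(yd)_{d\in D}$ to $(ya_n)$ enlarges the exceptional set by at most countably many points.)
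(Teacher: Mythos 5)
Your reduction is exactly the paper's: once $\mathbf{E}$ is known to be countable, packing dimension zero is immediate and Theorem~\ref{thm:packing} finishes the job (the paper equivalently routes through Lemma~\ref{lem:construction-lemma-gmy24}, since a countable $\mathbf{E}$ has countable ratio set $\mathbf{EE}^{-1}\neq\R$). But the entire content of the corollary is the countability of $\mathbf{E}$, and this you do not prove: you sketch a stratification $\mathbf{E}=\bigcup_k\mathbf{E}_k$ and a Weyl-type argument, and then explicitly flag the decisive step as ``the main obstacle'' and ``the heart of the matter.'' That is a genuine gap, not a routine verification left to the reader. (To be fair, the survey does not prove it either; it cites \cite{GMY24}, where this is the ``new result on the distribution of sequences modulo $1$'' extending Amice, Kahane, and Haight.)

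Two concrete remarks on the step you left open. First, your worry that ``$D$ need not meet the Bohr-type boxes in the expected proportion'' points at the wrong inequality: you should not try to force $D$ to hit the box $J_1\times\cdots\times J_N$, but rather observe that if $y_1,\dots,y_N\in\mathbf{E}_k$ with omitted arcs $J_1,\dots,J_N$, then (in the integer model) $D$ is \emph{contained} in $B=\{d\in\Z:\langle y_id\rangle\in J_i^c\ \forall i\}$. When $1,y_1,\dots,y_N$ are $\Q$-linearly independent, Weyl equidistribution of $(dy_1,\dots,dy_N)$ in ${\mathbb T}^N$, with discrepancy over $\{h+1,\dots,h+n\}$ independent of $h$ by translation invariance, forces the upper Banach density of $B$ to be at most $(1-1/k)^N$; hence $c\leq(1-1/k)^N$ and $N\leq N(k,c)$, which bounds the $\Q$-rank of $\mathbf{E}_k$ and gives countability. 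Second, the passage from $\lfloor a_n\rfloor$ to $a_n$ is not cosmetic: $y\langle a_n\rangle$ is a perturbation of size up to $|y|$, so one must first restrict to $\mathbf{E}_k\cap[-R,R]$ and pigeonhole the fractional parts $\langle a_n\rangle$ into short subintervals to extract a positive-Banach-density subfamily along which the perturbation is essentially constant, before running the containment argument with slightly shrunken arcs. Neither of these is supplied in your write-up, so as it stands the proposal establishes the corollary only conditionally on the unproved countability claim.
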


Consider the sequence $(a_n)_{n=1}^\infty$ obtained by arranging the terms of $\{f(n)+m: m,n \in \N, m \leq n\}$ in increasing order, where $f: \mathbb{N} \to \mathbb{R}$ is an arbitrary increasing function. If $f$ grows sufficiently quickly (e.g. if $f(n) = 2^n$), this sequence is neither lacunary nor sublacunary, but corollary \ref{thm:b-dense} shows that it is nonuniversal in $\mathscr{R}(p)$ for every $0 \leq p < 1$. In fact, by choosing $f$ to grow sufficiently quickly, we can make $(a_n)_{n=1}^\infty$ grow as fast as we wish. In particular, we can ensure that $\log{a_n} \neq o(n)$ so the nonuniversality of this sequence cannot be deduced from Theorem \ref{th:kolountzakispapageorgiou23}.

To prove Corollary \ref{thm:b-dense}, the authors of \cite{GMY24} prove a new result on the distribution of sequences modulo $1$, extending work of Y. Amice \cite{Amice}, J.P. Kahane \cite{Kahane64}, and J. Haight \cite{Haight88}. 

\subsection{Some progress for lacunary sequences.} Let us now discuss Question \ref{q:large-erdos-2^n} for some specific lacunary sequences. Although none of the results mentioned so far are applicable to the sequence $(2^n)_{n=1}^\infty$, the authors of \cite{GMY24} prove that certain exponential sequences $(b^n)_{n=1}^\infty$ are not universal in $\mathscr{R}(p)$ for a restricted range of $p$ that depends on the algebraic properties of the base $b$.

\begin{theorem}[Gao--Mooroogen--Yip]\label{th:special-exp}
Let $b \in \R$ be an algebraic number, $f \in \Z[x]$ its minimal polynomial, and $G = \{\sum_{j=0}^m a_j x^j \in \R[x]: a_0 = 1 \text{ or } a_m = 1\}$. Define 
\begin{align*}
    \ell(b) = \inf_{g \in G} L(fg)
\end{align*}
where $L(fg)$ is the sum of the absolute values of the coefficients of the polynomial $fg$. For each $0 \leq p < \ell(b)$, there exists a $p$-large set containing no affine copy of $(b^n)_{n=1}^\infty$.
\end{theorem}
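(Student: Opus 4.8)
The plan is to produce, for every $0\le p<\ell(b)$, an explicit $p$-large set $E\subseteq\R$ containing no affine copy of $(b^n)_{n=1}^{\infty}$; equivalently, to build a complement $E^{c}$ with $m\bigl(E^{c}\cap(k+[0,1])\bigr)\le 1-p$ for all $k\in\Z$ that meets every set $\{x+yb^{n}:n\ge 1\}$ with $y\ne 0$. The engine is a linear recurrence satisfied by affine copies of $(b^{n})$. Fix $g\in G$ with $L(fg)$ as close as we wish to $\ell(b)$ and write $fg=\sum_{j=0}^{D}d_{j}x^{j}$. Since $f(b)=0$ we have $(fg)(b)=0$, so for any $x\in\R$, $y\ne 0$ and any $n\ge 1$,
\[
\sum_{j=0}^{D}d_{j}\bigl(x+yb^{n+j}\bigr)\;=\;x\,(fg)(1)\;+\;y\,b^{n}(fg)(b)\;=\;x\,(fg)(1),
\]
a constant independent of $n$. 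Because $g\in G$, one of the extreme coefficients of $fg$ is pinned (either $d_{0}=f(0)$, from $g(0)=1$, or $d_{D}$ equals the leading coefficient of $f$, from $g$ monic), so one may solve for the corresponding term of the copy and obtain an honest recursion whose coefficients have $\ell^{1}$-norm essentially $L(fg)$; in particular the position of a new term is determined, up to a tolerance proportional to $L(fg)$, by the positions of the previous $D$ terms.

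With this recurrence in hand I would build $E^{c}$ as a \emph{self-similar, multi-scale} family of short forbidden intervals. The self-similarity $b\cdot b^{n}=b^{n+1}$ means that rescaling the dilation parameter $y$ by a power of $b$ merely translates the index set of an affine copy, so at each scale $b^{k}$ one can use a translate of essentially the same pattern of forbidden intervals, with their within-cell positions chosen so that the recurrence above, applied at the scale where $|y|b^{n}$ is comparable to $b^{k}$, forces some term $x+yb^{n}$ into a forbidden interval once the earlier terms have avoided them. The total length deleted from each unit cell must exceed $1-\ell(b)$ for this forcing to succeed — this is exactly where the optimization over the auxiliary polynomial $g$ and the hypothesis $p<\ell(b)$ enter — and as long as $p<\ell(b)$ one can keep the deleted measure below $1-p$, so that $E=\R\setminus E^{c}$ is $p$-large. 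One then verifies, uniformly over $(x,y)$, that some term of the copy is trapped: split according to which scale $k$ is resonant for the relevant range of indices $n$, use the recurrence together with standard distribution/pigeonhole estimates to pin the fractional behaviour of the copy at that scale, and conclude.

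The main obstacle will be the construction and verification of the second step: the positions and lengths of the forbidden intervals must be chosen so that the recurrence genuinely forces a collision for \emph{every} pair $(x,y)$ — not only for generic ones, and in particular for the exceptional $y$ whose orbits $(yb^{n})$ are badly distributed modulo $1$, which (from the discussion around Theorem \ref{thm:packing}) form a set of full packing dimension for lacunary $b^{n}$ and so cannot be ignored — while simultaneously keeping the per-cell deleted measure below $1-p$ for all $p<\ell(b)$. Obtaining the sharp constant $\ell(b)=\inf_{g\in G}L(fg)$, rather than the cruder bound coming from the choice $g=1$, requires exploiting the trade-off between the length $D$ of the recurrence and the size $L(fg)$ of its coefficients: a longer recurrence with smaller coefficient sum yields a tighter tolerance and hence a smaller obstacle, and it is this optimization over $g$ that makes the admissible range of $p$ depend on the arithmetic of the base $b$. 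This is also why the method — unlike the subexponential results behind Theorem \ref{th:kolountzakispapageorgiou23} — is insensitive to the growth rate of the sequence but sensitive to $b$ being algebraic: it is the relation $f(b)=0$, not a union bound over scales, that does the work, which is precisely the feature that gives it a foothold on fast sequences and connects it to Question \ref{q:large-erdos-2^n}.
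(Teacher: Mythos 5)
You have correctly isolated the algebraic engine: since $(fg)(b)=0$ and the normalization $g\in G$ pins one extreme coefficient of $fg$ at a nonzero integer, every affine copy $(x+yb^n)_{n=1}^\infty$ satisfies a linear relation whose coefficients have $\ell^1$-norm $L(fg)$, and optimizing over $g$ is what produces $\ell(b)$. This is exactly the mechanism behind the Dubickas-type estimate that the actual argument (sketched for $b=2$ at the end of Section \ref{section 6}, and carried out in general in \cite{GMY24}) uses as its main input. But from that point on your proposal stops being a proof: the ``self-similar, multi-scale family of forbidden intervals'' is never constructed, and you yourself flag its construction and verification as ``the main obstacle''. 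That step is the entire content of the theorem, so what you have is a plan with the hard part deferred.

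Two concrete structural points are missing, both visible in the proof of Corollary \ref{cor:2nlarge}. First, pushed to its conclusion, your recurrence yields a \emph{spread} estimate: for $y\notin\Q(b)$ the fractional parts of $x+yb^n$ cannot all lie in an interval of length less than $1/L(fg)$ (reduce the relation modulo $1$ and solve for the pinned extreme term to get a contradiction). For such $y$ no multi-scale construction is needed at all --- the trivial set $E_p=\{x:\langle x\rangle<p\}$ of \eqref{eq:bkm-basic} already omits a term of every such copy. Second, the recurrence tells you nothing when $y\in\Q(b)$ (take $y=1$, $b=2$: the orbit is identically $0$ modulo $1$), and these are the dilations that genuinely need a separate set. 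You instead point at the set $\mathbf{E}$ of non-dense orbits, which has full packing dimension and would indeed be intractable --- but it is the wrong exceptional set: the spread estimate requires only $y\notin\Q(b)$, not density modulo $1$, and $\Q(b)$ is countable, so Lemma \ref{lem:construction2} with $C=\Q(b)$ disposes of it; intersecting the two sets finishes the proof. Finally, your bookkeeping ``deleted length must exceed $1-\ell(b)$'' cannot be right as written, since $\ell(b)=\inf_g L(fg)\ge 1$ (e.g.\ $\ell(2)=2$); the threshold the argument actually produces is $p<1/\ell(b)$, which is what matches Corollary \ref{cor:2nlarge}, and tracing this constant through is precisely the quantitative step your proposal leaves undone.
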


The special case $b = 2$ partially answers Question \ref{q:large-erdos-2^n}. 

\begin{corollary}\label{cor:2nlarge}
For each $0 \leq p < 1/2$ there exists a $p$-large set containing no affine copy of $(2^n)_{n=1}^\infty$.
\end{corollary}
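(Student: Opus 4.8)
The plan is to specialize Theorem~\ref{th:special-exp} to the base $b=2$ and to evaluate the resulting threshold $\ell(2)$. Since $2$ is rational, its minimal polynomial over $\Z$ is $f(x)=x-2$, so the whole content of the corollary is the evaluation of $\inf_{g\in G}L\bigl((x-2)g(x)\bigr)$, where $G$ denotes the set of real polynomials whose constant term or leading coefficient equals $1$.

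I would obtain the upper bound from an explicit near-optimal family. Take
$$
g_m(x)=\sum_{j=0}^{m}2^{-j}x^{j}=1+\frac{x}{2}+\cdots+\frac{x^{m}}{2^{m}}\in G ,
$$
whose constant term is $1$. A short telescoping computation gives the identity $(x-2)g_m(x)=2^{-m}x^{m+1}-2$, whose only nonzero coefficients are $2^{-m}$ and $-2$; hence $L\bigl((x-2)g_m\bigr)=2+2^{-m}$. Letting $m\to\infty$ shows $\inf_{g\in G}L\bigl((x-2)g\bigr)\le 2$, and this is exactly the value that yields the threshold $\tfrac12$ appearing in the corollary.

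For the matching lower bound, write $g(x)=\sum_{j=0}^{m}a_jx^{j}$. The coefficients of $(x-2)g$ are $-2a_0,\ a_0-2a_1,\ \ldots,\ a_{m-1}-2a_m,\ a_m$, so
$$
L\bigl((x-2)g\bigr)=2|a_0|+\sum_{j=1}^{m}|a_{j-1}-2a_j|+|a_m| .
$$
If $a_0=1$ this is already $\ge 2$. If instead $a_m=1$, set $s_j=a_{j-1}-2a_j$, note that $a_0=2^{m}+\sum_{j=1}^{m}2^{j-1}s_j$, and combine the triangle inequality with the weighted estimate $\sum_j 2^{j-1}|s_j|\ge 2^{m-1}\bigl|\sum_j 2^{j-1}s_j\bigr|$ to conclude $L\bigl((x-2)g\bigr)\ge 3$. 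Taking the infimum over the two cases gives $\inf_{g\in G}L\bigl((x-2)g\bigr)=2$, so $\ell(2)=\tfrac12$, and Theorem~\ref{th:special-exp} then furnishes, for each $0\le p<\tfrac12$, a $p$-large set containing no affine copy of $(2^n)_{n=1}^{\infty}$, which partially answers Question~\ref{q:large-erdos-2^n}.

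There is essentially no obstacle here: all the depth lies in Theorem~\ref{th:special-exp}, and the corollary is merely the evaluation of a one-parameter infimum. The only mild care needed is in the lower-bound bookkeeping (and in observing that the infimum is approached through the family $g_m$ rather than attained); the upper bound, which is what actually pins down the number $\tfrac12$, reduces to the single explicit identity $(x-2)g_m(x)=2^{-m}x^{m+1}-2$.
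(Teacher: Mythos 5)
Your route---specializing Theorem \ref{th:special-exp} to $b=2$ and evaluating the reduced length of $x-2$---is the derivation the paper gestures at in Section \ref{sec:results-large}, and your identity $(x-2)g_m(x)=2^{-m}x^{m+1}-2$, giving $\inf_{g\in G}L((x-2)g)=2$, is correct and is indeed the computation that produces the constant $1/2$. But two steps do not hold up as written. First, after computing $\inf_{g\in G}L((x-2)g)=2$ you conclude ``so $\ell(2)=\tfrac12$,'' which contradicts the definition $\ell(b)=\inf_{g\in G}L(fg)$ in Theorem \ref{th:special-exp}: by that definition $\ell(2)=2$, and the stated range $0\le p<\ell(b)$ would then cover every $p<1$ and fully answer Question \ref{q:large-erdos-2^n}. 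The resolution is that the threshold in Theorem \ref{th:special-exp} should be read as $1/\ell(b)$ (consistent with Dubickas's theorem, quoted in Section \ref{section 6}, that for $y\notin\Q(b)$ the fractional parts of $yb^n$ cannot all lie in an interval of length less than $1/\ell(b)$); you must say this explicitly rather than silently inverting the number, since your argument otherwise proves either the wrong bound or too much. Second, the displayed inequality $\sum_j 2^{j-1}|s_j|\ge 2^{m-1}\bigl|\sum_j 2^{j-1}s_j\bigr|$ is false (take $m=2$, $s_1=s_2=1$: the left side is $3$, the right side is $6$). The inequality you need is $2^{m-1}\sum_j|s_j|\ge\bigl|\sum_j 2^{j-1}s_j\bigr|=|a_0-2^m|$, which yields $\sum_j|s_j|\ge 2-2^{1-m}|a_0|$ and hence $L\ge 3+(2-2^{1-m})|a_0|\ge 3$ when $a_m=1$. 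Note also that the lower bound is superfluous for the corollary: only $\ell(2)\le 2$, i.e.\ the upper bound via $g_m$, is used.

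For comparison, the paper proves the corollary in Section \ref{section 6} without passing through Theorem \ref{th:special-exp}: it invokes Dubickas's result that for $y\notin\Q(2)=\Q$ the sequence $(x+y2^n)_{n=1}^\infty$ modulo $1$ is not contained in any interval of length less than $1/2$, so the set $E_p$ of \eqref{eq:bkm-basic} with $p<1/2$ already excludes those affine copies, and then handles the countable exceptional set of dilations $y\in\Q(2)$ via Lemma \ref{lem:construction2}. This is essentially the proof of Theorem \ref{th:special-exp} unwound at $b=2$, with your reduced-length computation supplying the constant $1/2$; either presentation is acceptable once the two issues above are repaired.
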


\section{Arguments using metric number theory} \label{section 6}

Several of the results on Question \ref{q:large-erdos} employ tools from metric number theory. In this section we explain why it is natural to consider these tools. To begin, let $a_n \to \infty$ be an increasing sequence. It is known that if $\inf (a_{n+1} - a_n) \geq 1$, then
\begin{align*}
         \mathbf{E} = \{y \in \mathbb{R}: (ya_n)_{n=1}^\infty \text{ is not dense modulo }1\}
\end{align*}
has Lebesgue measure zero (see for example \cite[Section 1.4]{KN74}). Using this fact, it is easy to show that for each $0 \leq p < 1$, there exists a $p$-large set not containing \textit{almost every} affine copy of $(a_n)_{n=1}^\infty$. 
     
\begin{theorem}\label{th:almostlarge}
    Let $a_n \to \infty$ be an increasing sequence. For every $0 \leq p < 1$, there exists a $p$-large set containing no sequence of the form $(x + ya_n)_{n=1}^\infty$ for every $x \in \R$ and (Lebesgue) almost every $y \in \R$.
\end{theorem}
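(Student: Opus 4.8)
The plan is to take for $E$ a single set that is periodic of period $1$, obtained by deleting from each unit cell a fixed open interval of length $1-p$ (a ``comb''). Such a set is automatically $p$-large, and the only affine images $(x+ya_n)_{n=1}^\infty$ it can possibly contain are those for which the orbit $(ya_n)_{n=1}^\infty$ fails to be dense modulo $1$. By the metric number theory fact recalled at the start of Section \ref{section 6}, the set of such $y$ is exactly the set $\mathbf{E}$, which is Lebesgue null, and this yields the conclusion with the \emph{same} exceptional null set for every $x$.

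I would carry this out in three steps. First, although ``$\mathbf{E}$ is Lebesgue null'' was recalled only under the hypothesis $\inf(a_{n+1}-a_n)\ge 1$, it holds for every increasing $a_n\to\infty$: since $a_n\to\infty$ one can extract a subsequence $(a_{n_k})_{k}$ with $a_{n_{k+1}}-a_{n_k}\ge 1$, and if $(ya_{n_k})_k$ is dense modulo $1$ then so is the larger sequence $(ya_n)_n$; hence the set $\mathbf{E}$ attached to $(a_n)$ is contained in the one attached to $(a_{n_k})$, and the latter is null. Second, set $\varepsilon=(1-p)/2$, so that $0<\varepsilon\le\tfrac12$, and define
\[
E=\R\setminus\bigcup_{k\in\Z}\Bigl(k+\tfrac12-\varepsilon,\ k+\tfrac12+\varepsilon\Bigr).
\]
Then $E$ is closed, and $m\bigl(E\cap(k+[0,1])\bigr)=1-2\varepsilon=p$ for every $k\in\Z$, so $E\in\mathscr{R}(p)$. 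Third, fix an arbitrary $x\in\R$ and an arbitrary $y\notin\mathbf{E}$. Since $(ya_n)_n$ is dense modulo $1$, the set $\{ya_n\bmod 1: n\ge 1\}$ is dense in $\R/\Z$, and translating by $x$ (a homeomorphism of $\R/\Z$) shows that $\{(x+ya_n)\bmod 1: n\ge 1\}$ is dense in $\R/\Z$ as well; as the image of $(\tfrac12-\varepsilon,\tfrac12+\varepsilon)$ in $\R/\Z$ is a nonempty open set, some term $x+ya_n$ must lie in $k+(\tfrac12-\varepsilon,\tfrac12+\varepsilon)$ for some $k\in\Z$, that is, $x+ya_n\notin E$. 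Hence $(x+ya_n)_{n=1}^\infty\not\subset E$ for every $x\in\R$ and every $y$ outside the null set $\mathbf{E}$, as required.

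I do not expect a genuine obstacle here: the only substantive ingredient is the equidistribution fact that $\mathbf{E}$ is Lebesgue null, which we are entitled to assume. The points that require a little care are the reduction to a subsequence with consecutive gaps at least $1$, and the routine observations that density modulo $1$ is translation-invariant and passes from a subsequence to the whole sequence. Were one not permitted to cite the nullity of $\mathbf{E}$, establishing it from scratch --- a classical result in the theory of uniform distribution, see \cite[Section 1.4]{KN74} --- would become the main task.
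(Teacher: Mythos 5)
Your proposal is correct and follows essentially the same route as the paper: pass to a subsequence with gaps at least $1$ to get that $\mathbf{E}$ is Lebesgue null, take a period-$1$ ``comb'' that omits an interval of length $1-p$ from each unit cell, and observe that density of $(x+ya_n)$ modulo $1$ for $y\notin\mathbf{E}$ forces the sequence to hit the removed interval. The only differences are cosmetic (a centered deleted interval rather than the set $\{x:0\le\langle x\rangle<p\}$), and your justification of the subsequence reduction is if anything slightly more careful than the paper's.
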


\begin{proof}
By passing subsequences if necessary, we may assume that $\inf (a_{n+1} - a_n) > 1$.  Given $0 \leq p < 1$, consider the $p$-large set 
    \begin{align}\label{eq:bkm-basic}
        E_p = \{x \in \R : 0 \leq \langle x\rangle < p \}.
    \end{align} 
    If $x \in \R$ and $y \in \R \setminus \mathbf{E}$, then the sequence $(x + ya_n)_{n=1}^\infty$ is dense modulo $1$ and must therefore contain elements $x+y a_{k}$ whose fractional part larger than $p$. $E_p$ cannot contain such elements from definition, so $(x + ya_n)_{n=1}^\infty$  cannot be a subset of $E_p$. Since $\R \setminus \mathbf{E}$ has full Lebesgue measure, the proof is complete. 
\end{proof}

Theorem \ref{th:almostlarge} bears a striking resemblance to Kolountzakis's Theorem \ref{th:almost-erdos-similarity-kol97} on the Erd\H{o}s similarity conjecture. In light of this result, it seems reasonable to conjecture that every sequence $a_n \to \infty$ is not universal in $\mathscr{R}(p)$ for every $0 \leq p < 1$.

To show that a sequence $(a_n)_{n=1}^\infty$ is not universal in $\mathscr{R}(p)$, we must construct a $p$-large set not containing \textit{any} affine copy of that sequence. The proofs of Theorems \ref{th:bkm}, \ref{thm:packing}, and Corollary \ref{thm:b-dense} share a common strategy: they exploit the fact that under additional assumptions on $(a_n)_{n=1}^\infty$, the sets $\mathbf{E}$ are small---smaller than merely being of measure zero. In these cases, constructions like \eqref{eq:bkm-basic} provide simple $p$-large sets that do not contain the vast majority of affine copies of $(a_n)_{n=1}^\infty$, and it remains to find a construction that works for the ``few'' remaining affine copies with dilation parameter in $\mathbf{E}$. To illustrate this strategy, let us sketch a proof of Theorem \ref{th:bkm}.

\begin{proof}[Proof of Theorem \ref{th:bkm}] For simplicity, we only construct a $1/2$-large set not containing any affine copy of $(n)_{n=1}^\infty$.  This construction can  be adapted to produce a $(n-2)/n$-large set with this property for any $n \in \N$. For sufficiently large $n$, this proves Theorem \ref{th:bkm}.

Partition $[0,1]$ into subintervals $I_j = [j/4, (j+1)/4]$, where $j \in \{0, 1, 2, 3\}$. Set
\begin{align*}
    (b_k)_{k = 0}^\infty = (0,1,2,\overbrace{0,0,1,1,2,2}^\text{Repeat digits twice.},\underbrace{0,0,0,1,1,1,2,2,2}_\text{Repeat digits thrice.}, \overbrace{0,0,0,0, 1,1,1,1, 2,2,2,2}^\text{Repeat digits four times.}, \ldots)
\end{align*}
and define
\begin{align*}
    E = \bigcup_{k = 0}^\infty \big([0,1] \setminus (I_{b_k} \cup I_3)\big) + k.
\end{align*}

This set is $1/2$-large since for each $k$ we keep exactly two intervals of the form $I_j + k$. We claim that $E$ contains no sequence $(x + yn)_{n=1}^\infty$ for any real numbers $0 \leq x < 1$  and $y > 0$. It is easy to check that the exceptional set ${\bf E}$ in (\ref{eq:gmy-except}) is exactly $\Q$ for the sequence $(n)_{n=1}^{\infty}$. If $y$ is irrational, then $(x + yn)_{n=1}^\infty$ is dense modulo $1$. Thus, $y \in \R\setminus \mathbf{E}$, and the claim follows from the fact that $E \subset E_\frac{1}{3}$ where $E_\frac{1}{3}$ is as in \eqref{eq:bkm-basic}.

Suppose now that $y \in \Q = \mathbf{E}$. Choose a subsequence $(x + y'n)_{n=1}^\infty$ where $y'$ is an integer. The terms of this subsequence are all equal modulo $1$, so there exists {exactly one} $j_0 \in \{0, 1, 2\}$ such that the entire subsequence belongs to intervals of the form $I_j + k$. If $E$ contains $(x + y'n)_{n=1}^\infty$, then 
\begin{align*}
    (x + y'n)_{n=1}^\infty \subset \bigcup_{k = 1}^\infty (I_{j_0} + k) \cap E = \bigcup_{k \text{ s.t } b_k = {j_0}} (I_{j_0} + k).
\end{align*}
The set on the right-hand side is a union of disjoint intervals. Since the separation between its consecutive intervals becomes arbitrarily large as $k \to \infty$, it cannot contain any unbounded sequence of equally-spaced points. This is a contradiction.
\end{proof}

The above proof exploits the fact that the set $\mathbf{E}$ is countable. The next lemma gives a more systematic way of obtaining a set avoiding all affine copies of a given sequence, provided that the dilation parameters of the affine maps are restricted to a countable set. The proof (omitted here) uses a similar construction to the one presented above.

\begin{lemma}[Gao--Mooroogen--Yip]\label{lem:construction2}
    Let $a_n \to \infty$ be an increasing sequence of real numbers and $C \subset (0, \infty)$ a countable set. For each $0 \leq p < 1$, there exists a $p$-large set containing no sequence of the form $(x + ya_n)_{n = 1}^\infty$ where $x \in \R$ and $y \in C$.
\end{lemma}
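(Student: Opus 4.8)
\textbf{Proof proposal for Lemma \ref{lem:construction2}.}

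The plan is to mimic the explicit construction used in the proof of Theorem \ref{th:bkm}, but to handle a \emph{countable} family of dilation parameters simultaneously by a diagonal/bookkeeping argument rather than just the single family of integer dilations. First I would enumerate $C = \{y_1, y_2, y_3, \ldots\}$. For each fixed $y_i$, the sequence $(x + y_i a_n)_{n=1}^\infty$ has points spaced by $y_i(a_{n+1} - a_n)$; after passing to a subsequence of $(a_n)$ (which only makes the avoidance problem harder, hence suffices) we may assume $\inf(a_{n+1} - a_n) \ge 1$, so the gaps along any such arithmetic-type progression are bounded below by $y_i$. The key structural fact, exactly as in the Theorem \ref{th:bkm} proof, is that a set built as a union over $k \in \N$ of ``a chosen proportion of the unit intervals $I_j + k$'', where the excluded block of intervals is pushed around according to a pattern that repeats each digit with increasing multiplicity, cannot contain an unbounded sequence whose consecutive terms lie in a single residue class mod $1$ and are separated by a fixed amount: the surviving intervals in that residue class become arbitrarily sparse.

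The main steps I would carry out are: (1) Fix $0 \le p < 1$ and pick $N$ with $(N - 2)/N > p$ (more generally, a proportion parameter so that keeping ``all but two out of $N$'' subintervals of each unit cell yields a $p$-large set). Partition $[0,1]$ into $N$ subintervals $I_0, \ldots, I_{N-1}$. (2) Build a single exclusion pattern $(b_k)_{k=0}^\infty$ valued in $\{0,1,\ldots,N-2\}$ that, for \emph{every} fixed period $q \in \N$ and every residue $r \in \{0, \ldots, N-2\}$, takes the value $r$ on arbitrarily long runs of consecutive multiples of $q$ — i.e. for each $(q,r)$ there are arbitrarily long arithmetic progressions $k, k+q, \ldots, k + Lq$ of indices on which $b$ is constantly $r$. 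Such a pattern exists because $\N \times \N \times \{0,\ldots,N-2\}$ is countable, so we can list all triples $(q, r, L)$ and greedily reserve a fresh block of consecutive integers to realize each requirement, always leaving room for the later ones. (3) Define $E = \bigcup_{k=0}^\infty \big([0,1] \setminus (I_{b_k} \cup I_{N-1})\big) + k$; this is $p$-large since each unit cell retains $N - 2$ of its $N$ subintervals. (4) Show $E$ omits every $(x + y_i a_n)_{n=1}^\infty$: if $y_i$ is such that the progression is dense mod $1$ (which includes all $y_i \notin \mathbf{E}$) then it has a term with fractional part in the last subinterval $I_{N-1}$, which $E$ avoids entirely; if $y_i \in \mathbf{E}$, pass to a subsequence along which $y_i a_n$ is (eventually) constant mod $1/$(something) — here one uses that $\mathbf{E}$ consists of $y$ for which $(y a_n)$ clusters mod $1$, so one extracts an infinite sub-progression lying in a single interval $I_{j_0}$, $j_0 \ne N-1$, with index set an infinite subset of some arithmetic progression; then by step (2) the surviving indices with $b_k = j_0$ along that progression are arbitrarily sparse while $(x + y_i a_n)$ forces infinitely many terms there with controlled (bounded) index-gaps after the subsequence extraction, a contradiction.

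The step I expect to be the main obstacle is step (4) in the case $y_i \in \mathbf{E}$, specifically making rigorous the extraction of an infinite sub-progression of $(x + y_i a_n)_n$ that lies in a single subinterval $I_{j_0}$ and whose \emph{index set} sits inside a genuine arithmetic progression with a \emph{fixed} common difference $q$ — the definition of $\mathbf{E}$ only says $(y_i a_n)$ is not dense mod $1$, which a priori gives clustering but not exact periodicity, so one must be careful about how $(a_n)$ itself is structured (it need not be $(n)$ or even have integer gaps). I would address this by first replacing $(a_n)$ by a subsequence, using that $(y_i a_n)$ has a limit point $\theta$ mod $1$ avoiding an open set, to arrange that all but finitely many terms of the extracted progression lie in one fixed $I_{j_0}$; controlling the index set to be an arithmetic progression is not actually needed if instead we require in step (2) the stronger property that for each residue $r$, the set $\{k : b_k = r\}$ has arbitrarily long gaps (super-sparse runs) — then \emph{any} infinite set of indices landing in $I_{j_0} + k$ must include infinitely many $k$ with $b_k = j_0$ (else those cells were removed), whence the retained cells must be cofinal in that index set, yet they are arbitrarily sparse, contradicting that consecutive terms of $(x + y_i a_n)$ along our extracted subsequence stay within a bounded number of unit cells of each other. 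This reframing — demanding long gaps in each level set of $b$ rather than long constant runs on progressions — is cleaner, and I would organize the write-up around it. The remainder (verifying $p$-largeness and the dense-mod-$1$ case) is routine and parallels the Theorem \ref{th:bkm} argument verbatim.
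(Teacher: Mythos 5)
Your overall plan---enumerate $C$ and remove a small, moving piece from each unit cell so that every affine copy eventually lands in a removed piece---is the right shape, but step (4) has a genuine gap in the case where the extracted cluster interval is $I_{j_0}$ with $j_0 \neq N-1$, and your proposed repair does not close it. For the subsequence $(x+y_i a_{n_m})_m$ to survive in your set $E$ you need $b_{k_m}\neq j_0$ for every visited cell index $k_m=\lfloor x+y_i a_{n_m}\rfloor$: cells with $b_k=j_0$ are precisely those from which $I_{j_0}$ has been \emph{removed}, so your parenthetical ``else those cells were removed'' inverts the logic. The contradiction you need is therefore that the index set $K=\{k_m\}$ must meet $\{k: b_k=j_0\}$, and there is no reason it should. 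The hypotheses give no upper bound on the gaps $a_{n+1}-a_n$ (passing to a subsequence only enlarges them) and $x$ is arbitrary, so $K$ is an essentially arbitrary infinite subset of $\N$; since the level sets $\{k:b_k=r\}$, $r\in\{0,\dots,N-2\}$, partition $\N$, an infinite $K$ can sit entirely inside a level set other than $\{k:b_k=j_0\}$ no matter how the runs or gaps of $(b_k)$ are designed. Concretely, given any fixed pattern $(b_k)$, take $y=1$, $x=0$, and $a_n=m_n+0.1$ where $\{m_1<m_2<\cdots\}\subset\{k:b_k\neq j_0\}$ and $0.1\in I_{j_0}\setminus I_{N-1}$: every term of the full sequence lies in a retained subinterval of $E$. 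Your closing appeal to ``consecutive terms staying within a bounded number of unit cells of each other'' is exactly the upper gap bound you do not have; the proof of Theorem \ref{th:bkm} survives only because for $(n)_{n=1}^\infty$ with integer dilation the visited cells form an arithmetic progression with \emph{fixed} common difference, which must meet any sufficiently long run of consecutive integers. No fixed pattern over a finite alphabet can play that role against arbitrary index sets.

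The fix is not a cleverer pattern but an \emph{adaptive, covering} choice of the removed pieces, using only that $ya_n\to\infty$. Fix $y\in C$ and a budget $\varepsilon>0$; for each $j\in\Z$ partition $[j,j+1)$ into $M\approx 1/\varepsilon$ pieces $P_1,\dots,P_M$, and for each $i$ choose an index $n(j,i)$ so large that the translate $P_i+ya_{n(j,i)}$ lies in unit cells untouched by all previously chosen translates; remove all such translates. Each unit cell then loses measure at most about $\varepsilon$, while every $x\in[j,j+1)$ lies in some $P_i$ and hence $x+ya_{n(j,i)}$ lies in a removed piece, so no $(x+ya_n)_{n=1}^\infty$ is contained in the complement of the removed set. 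Allocating budget $\varepsilon 2^{-i}$ to the $i$-th element of $C$ and taking the union of the removed sets proves the lemma with no density-modulo-$1$ dichotomy and no assumption on the gaps of $(a_n)$. (The paper omits the proof of Lemma \ref{lem:construction2}, saying only that it resembles the construction for Theorem \ref{th:bkm}; the resemblance is in removing sparse pieces from unit cells, not in the run-length pattern your write-up relies on.)
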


It is natural to ask how one can construct a $p$-large set avoiding all affine copies of a given sequence if its associated set $\mathbf{E}$ is uncountable. The next result provides one possible method.

\begin{lemma}[Gao--Mooroogen--Yip]\label{lem:construction-lemma-gmy24}
    Let $a_n \to \infty$ be an increasing sequence. If
    \begin{align}\label{eq:gmy-lem}
        \mathbf{EE}^{-1} = \left\{ \frac{x}{y} : x,y  \in \mathbf{E}, y \neq 0\right\} \neq \R,
    \end{align}
   then for each $0 \leq p < 1$, there exists a $p$-large set containing no affine copy of $(a_n)_{n = 1}^\infty$. 
\end{lemma}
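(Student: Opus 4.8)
The plan is to reduce the construction to the already-established Lemma \ref{lem:construction2}, which handles countable collections of dilation parameters. The key observation is that $\mathbf{E}$ is a multiplicatively invariant set in the following sense: if $y \in \mathbf{E}$ and $c \neq 0$ is such that the sequence $(c a_n)$ has some nice rigidity relative to $(y a_n)$, then $cy$ may or may not lie in $\mathbf{E}$; but the relevant structural fact is that $\mathbf{E}$ is closed under multiplication by any $c$ for which $(c a_n)_{n=1}^\infty$ is \emph{not} dense modulo $1$ — indeed, if $(ya_n)$ is not dense modulo $1$, it need not follow that $(cya_n)$ is not dense, so instead we argue contrapositively on ratios. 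The point of hypothesis \eqref{eq:gmy-lem} is precisely that $\mathbf{EE}^{-1} \neq \R$ gives us a real number $r \notin \mathbf{EE}^{-1}$, which means: for every $x, y \in \mathbf{E}$ with $y \neq 0$, we have $x/y \neq r$, i.e. $x \neq ry$. Equivalently, $r \mathbf{E} \cap \mathbf{E} = \{0\}$ possibly, or more usefully, the sets $\mathbf{E}$ and $r\mathbf{E}$ are ``almost disjoint'' in a way that lets us tile.

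First I would fix $r \notin \mathbf{EE}^{-1}$ and consider the multiplicative group (or just the set of powers) generated by $r$; more simply, consider the countably many scalings $\{r^k : k \in \Z\}$. The crucial claim is that the translates $\{r^k \mathbf{E} : k \in \Z\}$ are pairwise disjoint away from $0$: if $r^j \mathbf{E}$ and $r^k \mathbf{E}$ shared a nonzero point for $j \neq k$, we would get $r^{j-k} = x/y$ for some $x,y \in \mathbf{E}$, and since $\mathbf{E}$ is easily seen to be closed under nonzero rational (indeed, nonzero integer and reciprocal) multiplication — because $(ya_n)$ dense mod $1$ iff $(qya_n)$ dense mod $1$ for $q \in \Q \setminus\{0\}$ is \emph{false} in general, so I must be careful here. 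Let me restate: the robust fact is that $\mathbf{E}$ is closed under multiplication by rationals is genuinely needed, and this does hold because density modulo $1$ of $(ya_n)$ is \emph{not} preserved under scaling by $1/q$; hence I would instead only use disjointness of $r^k\mathbf{E}$ as asserting $r \notin \mathbf{E}\mathbf{E}^{-1}$ in the form that no two distinct elements relate by $r$, and pass to the \emph{partition} of a fundamental domain.

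The main construction: since $\R \setminus \mathbf{EE}^{-1}$ is nonempty, pick $r$ in it. For each dilation parameter $y$, either $y \notin \mathbf{E}$ — in which case $(ya_n)$ is dense modulo $1$ and the basic set $E_p$ from \eqref{eq:bkm-basic} already defeats the affine copy $(x + ya_n)$ — or $y \in \mathbf{E}$. So it suffices to handle dilation parameters in $\mathbf{E}$. Here I would \emph{scale the ambient space}: replace $E$ by $r \cdot E'$ for a suitable $p$-large set $E'$, using that $r$-dilation carries the problem for parameter $y$ to the problem for parameter $y/r$; iterating, the orbit of any single $y \in \mathbf{E}$ under division by $r$ eventually must leave $\mathbf{E}$ (otherwise $y, y/r, y/r^2, \ldots \in \mathbf{E}$ forces $r = (y/r^{k})/(y/r^{k+1}) \cdot$ trivial relation — hmm, that gives $r \in \mathbf{EE}^{-1}$ only if consecutive terms are both in $\mathbf{E}$, which they are by assumption, contradiction). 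This is the heart of the argument: the condition $r \notin \mathbf{EE}^{-1}$ prevents any bi-infinite (or even two consecutive) chain inside $\mathbf{E}$ under multiplication by $r$, so each orbit $\{r^k y\}_{k \in \Z}$ meets $\mathbf{E}$ in at most finitely many — in fact essentially one — place. I would then combine countably many dilated copies of the basic set $E_p$ (one for each needed power of $r$), intersect appropriately to retain $p$-largeness (losing only an $\varepsilon$ of density, absorbable by starting from $p' > p$), and invoke Lemma \ref{lem:construction2} for the finitely/countably many genuinely remaining bad parameters.

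The step I expect to be the main obstacle is making precise the ``orbit meets $\mathbf{E}$ in at most one place'' claim and, relatedly, verifying that dilating the avoiding set by $r$ interacts correctly with $p$-largeness — a $p$-large set dilated by $r$ is \emph{not} $p$-large in general (its density pattern gets stretched), so one must either work with upper Banach density and re-discretize, or arrange the construction on a common refinement of the integer grids at all scales $r^k$, which requires $r$ rational or at least commensurable in a suitable sense. I suspect the actual argument in \cite{GMY24} circumvents this by choosing $r$ rational (possible if $\mathbf{EE}^{-1} \neq \R$ still leaves a rational outside it — not automatic!) or by a more clever direct construction rather than literal rescaling; reconstructing that workaround is where the real work lies, and I would look there first.
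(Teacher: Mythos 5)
Your core idea is the right one and is essentially the paper's: pick $y \notin \mathbf{EE}^{-1}$ and exploit the fact that $z \in \mathbf{E}$ and $zy^{-1} \in \mathbf{E}$ cannot both hold (else $y = z/(zy^{-1}) \in \mathbf{EE}^{-1}$). But your assembly of this idea does not close. The paper's construction is a single intersection of \emph{two} scales: let $D$ be the basic set from \eqref{eq:bkm-basic} with complement of measure $\varepsilon$ in each unit interval, and set $E = D \cap yD$. If $(x+za_n)_{n}\subset E$, containment in $D$ forces $z \in \mathbf{E}$ (else the sequence is dense mod $1$), and containment in $yD$ forces $zy^{-1}\in\mathbf{E}$, giving the contradiction; no parameters survive, so there is no ``remainder'' to handle. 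Your plan instead iterates along the orbit $\{r^ky\}_{k\in\Z}$, asserts it meets $\mathbf{E}$ in ``essentially one place,'' and then invokes Lemma \ref{lem:construction2} for the ``finitely/countably many genuinely remaining bad parameters.'' Two problems: the orbit claim is false as stated, since the hypothesis only excludes $r$ itself from $\mathbf{EE}^{-1}$, not its powers, so it only forbids two \emph{consecutive} orbit points from both lying in $\mathbf{E}$ (which, fortunately, is all the two-scale argument needs); and the fallback to Lemma \ref{lem:construction2} is unavailable because $\mathbf{E}$ is in general uncountable here (that is precisely the situation this lemma is designed for), and nothing in your argument reduces the surviving parameter set to a countable one.

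The $p$-largeness issue you single out as ``the main obstacle'' has an elementary resolution that does not require $r$ rational or any grid commensurability. One does not need $yD$ to be $p$-large in its own right; one only needs $m(I\setminus yD)$ small for every unit interval $I$. Since $D$ is $1$-periodic with complement of measure $\varepsilon$ per unit interval, $m(I\setminus yD)=y\,m(y^{-1}I\setminus D)\leq y\varepsilon$ after covering $y^{-1}I$ by a unit interval, so $m\bigl(I\setminus(D\cap yD)\bigr)\leq(1+y)\varepsilon$, which is at most $1-p$ once $\varepsilon$ is chosen small enough depending on the fixed $y$. (Also, $\mathbf{E}=-\mathbf{E}$ makes $\mathbf{EE}^{-1}$ symmetric, so one may take $y>1$; your worry about whether a rational lies outside $\mathbf{EE}^{-1}$ is moot.) With these two repairs your outline collapses to the paper's proof.
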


\begin{proof}
    Fix $0 \leq p < 1$ and $y \in \mathbb{R} \setminus (\mathbf{EE}^{-1})$ with $y > 1$.  This is always possible since $\mathbf{E}=-\mathbf{E}$ and therefore $\mathbf{EE}^{-1}$ is symmetric about $0$. Let $0 \leq p' < 1$ be a parameter to be determined later and define
    \begin{align*}
        D = \left\{x \in \R : 0 \leq \langle x \rangle < 1- p'\right\} \quad \text{and} \quad E = D \cap yD.
    \end{align*}
    Arguing as in Theorem \ref{th:almostlarge}, we see that $D$ does not contain any sequence $(x + za_n)_{n=1}^\infty$ where $x \in \R$ and $z \in \R \setminus \mathbf{E}$. Since $E \subset D$, the same is true of $E$. Suppose that there exist some $x \in \R$ and $z \in \mathbf{E} \setminus \{0\}$ such that $(x + za_n)_{n=1}^\infty \subset E$. Then $(xy^{-1} + zy^{-1}a_n)_{n=1}^\infty \subset D$ and therefore $zy^{-1} \in \mathbf{E}$. But now
    \begin{align*}
        y = \frac{z}{zy^{-1}} \in \mathbf{EE}^{-1}
    \end{align*}
    contradicting our choice of $y$. We conclude that $E$ contains no affine copy of $(a_n)_{n=1}^\infty$.
    
    We now show that $E$ is $p$-large. Fix an interval of unit length $I\subset \R$ and choose an interval of unit length $J\subset \R$ such that $y^{-1}I \subset J$. Then
    \begin{align*}
        m( I\setminus E ) &\leq m( I \setminus D ) + m( I \setminus yD) \\
        &= m( I \setminus D ) + y m\big( (y^{-1} I) \setminus D \big) \\
        &\leq (1+y) \max\{m(I \setminus D), m(J\setminus D)\}.
    \end{align*}
    We can make the right-hand side as small as desired by choosing $p' \to 1$. In particular, we can ensure that $m(I\setminus E) \leq 1- p$ for every interval of unit length $I$, which implies that $E$ is $p$-large.
\end{proof}


The proofs of Theorem \ref{thm:packing} and Corollary \ref{thm:b-dense} both  follow from Lemma \ref{lem:construction-lemma-gmy24}. For Theorem \ref{thm:packing}, it is a standard exercise in fractal geometry that if $\operatorname{dim}_P\mathbf{E} < 1/2$ then the Hausdorff dimension of $\mathbf{EE}^{-1}$ is strictly smaller than $1$, and therefore $\mathbf{EE}^{-1} \neq \R$. Corollary \ref{thm:b-dense}  uses a new result of Gao, the third author, and Yip to show that if the set of integer parts of a sequence has positive upper Banach density, then the corresponding set $\mathbf{E}$ is countable and therefore $\mathbf{EE}^{-1} \neq \R$. 

Lemma \ref{lem:construction-lemma-gmy24} also yields a one-line proof of Theorem \ref{th:bkm}: if $a_n \to \infty$ is linear,  then $\mathbf{EE}^{-1} = \Q \neq \R$. 


For exponential sequences (e.g. $(2^n)_{n=1}^\infty$) we do not know if $\mathbf{EE}^{-1} \neq \R$, so Lemma \ref{lem:construction-lemma-gmy24} may not be applicable. However, in the range $0 \leq p < 1/2$, we can modify the ideas discussed above to construct a $p$-large set containing no affine copy of $(2^n)_{n=1}^\infty$. Our substitute for estimates on the size of $\mathbf{E}$ is the fact that for every $x \in \R$ and $y \in \R \setminus \Q(2)$, the sequence $(x + y2^n)_{n=1}^\infty$ modulo $1$ is not contained in any interval of length smaller than $1/2$. This observation is due to A. Dubickas (specifically, \cite[Theorem 1]{Dubickas06} and \cite[Theorem 2]{D06b}). Combining it with Theorem \ref{lem:construction2} (with $C = \Q(2)$) proves Corollary \ref{cor:2nlarge}.

\section{The relationship between the original Erd\H{o}s similarity conjecture and its variant ``in the large''.}\label{sec:connections} It is currently not known if the non-universality of sequences in $\mathscr{R}(p)$ implies the non-universality of sequences in sets of positive measure, or vice versa. However, in \cite{BGKMW23}, A. Burgin, S. Goldberg, T. Keleti, C. MacMahon, and X. Wang showed that some of the results of subsection \ref{sec:results-large} can be used to construct sets of positive measure not containing any decreasing geometric progressions. 

\begin{theorem}[Burgin--Goldberg--Keleti--MacMahon--Wang]\label{th:bgkmw-erdos}
    There exists a measurable set $E \subset [0,1]$ of positive measure, with $0$ as a Lebesgue density point; i.e. 
    \begin{align*}
        \lim_{r \to 0^+} \frac{m\big(E \cap (0,r)\big)}{r} = 1,
    \end{align*}
    and containing no sequence $(yb^{-n})_{n = 1}^\infty$ for any $x, b \in \R$ with $y \neq 0, b > 1$.
\end{theorem}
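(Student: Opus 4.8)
The plan is to deduce Theorem~\ref{th:bgkmw-erdos} from the ``in the large'' results of Section~\ref{sec:large} by a change of variables that interchanges $0$ and $\infty$. Concretely, I would apply the exponential/logarithmic substitution $x \mapsto -\log x$ (or equivalently $x \mapsto 1/x$ composed with $\log$), which turns a decreasing geometric progression $(yb^{-n})_{n=1}^\infty$ in $(0,1]$ into an \emph{arithmetic} progression $(-\log y + n\log b)_{n=1}^\infty$ marching off to $+\infty$. Under this map, ``$0$ is a Lebesgue density point of $E$'' should translate into a statement about the behaviour of the image set near $+\infty$ — and this is precisely where $p$-largeness (or a slight strengthening of it) enters: we want the image to occupy a fixed positive proportion of each unit interval far out, which is what lets us invoke Theorem~\ref{th:bkm} or Corollary~\ref{th:falconer-in-the-large}.

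The key steps, in order, would be: (1) Use Theorem~\ref{th:bkm} (the linear case, which by the one-line argument via Lemma~\ref{lem:construction-lemma-gmy24} holds because $\mathbf{EE}^{-1} = \Q \neq \R$) to obtain, for each $0 \le p < 1$, a $p$-large set $F \subset \R$ containing no affine copy of $(n)_{n=1}^\infty$; note that containing no affine copy of $(n)_{n=1}^\infty$ is the same as containing no infinite arithmetic progression $(c + dn)_{n=1}^\infty$ with $d \ne 0$. (2) Pull $F$ back through the map $\phi(x) = -\log x$ to define $E = \{x \in (0,1] : -\log x \in F\} = \exp(-F) \cap (0,1]$; an infinite geometric progression $(yb^{-n})$ lies in $E$ iff the arithmetic progression $(-\log y + n \log b)$ lies in $F$, so $E$ contains no decreasing geometric progression. (3) Verify that $E$ has positive measure with $0$ as a density point: since $F$ is $p$-large, on each interval $[N, N+1]$ we have $m(F \cap [N,N+1]) \ge p$, and the change-of-variables formula gives $m(E \cap (e^{-(N+1)}, e^{-N}]) = \int_{F \cap [N,N+1]} e^{-t}\,dt \ge p\, e^{-(N+1)}$ while the whole dyadic-type block $(e^{-(N+1)}, e^{-N}]$ has length $e^{-N}(1 - e^{-1})$; taking $p$ close to $1$ forces the density of $E$ at $0$ to be at least $p/((1-e^{-1})e) \cdot$(const), and a more careful summation over blocks near $0$ pushes the lower density arbitrarily close to $1$. (4) If the crude bound from a single value of $p$ does not give density exactly $1$, iterate: use a sequence $p_j \to 1$ on successively smaller scales, or equivalently observe that $\delta(E)$ at $0$ can be made $\ge 1 - \varepsilon$ for every $\varepsilon$, and then a diagonal refinement of the construction yields density $1$.

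The main obstacle I anticipate is step~(3)–(4): the logarithmic change of variables does \emph{not} map Lebesgue measure to Lebesgue measure, so a flat $p$-largeness hypothesis on $F$ does not immediately translate into the sharp density statement ``$\lim_{r\to 0^+} m(E\cap(0,r))/r = 1$''. The Jacobian $e^{-t}$ is essentially constant on each unit interval, which is what saves us blockwise, but summing the contributions from the blocks $(e^{-(N+1)}, e^{-N}]$ as $N \to \infty$ and comparing against the radius $r$ (which will typically fall strictly inside a block) requires care, and getting the limit to equal $1$ rather than merely being bounded below by something positive is the crux. This is likely why the cited paper \cite{BGKMW23} phrases its contribution as ``slightly improving the universality in the large'': one genuinely needs a version of Theorem~\ref{th:bkm} producing sets $F$ whose density in $[N,N+1]$ tends to $1$ as $N \to \infty$ (not just a uniform lower bound $p$), and checking that the constructions of Section~\ref{sec:large} can be upgraded to deliver this — or splicing together $p_j$-large pieces on dyadic ranges of scales — is the technical heart of the argument. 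Everything else (the substitution, the correspondence between geometric and arithmetic progressions, and the measurability and positivity of $E$) is routine.
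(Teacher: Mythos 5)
Your proposal follows essentially the same route as the paper: obtain, for each $p<1$, a $p$-large set avoiding all infinite arithmetic progressions via Theorem \ref{th:bkm}, upgrade to a single set $F$ whose measure in $[k,k+1]$ tends to $1$, and exponentiate via $E=\exp(-F)$ to convert arithmetic progressions into geometric ones and the largeness at infinity into density $1$ at the origin. The upgrade you correctly flag as the technical crux is precisely Lemma \ref{lem:bgkmw}, which the paper itself states (citing \cite{BGKMW23}) without proof, so your sketch matches the paper's argument step for step.
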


To put this result in context, recall that the following special case of the Erd\H{o}s similarity conjecture is open.

\begin{question}
    Let $b > 1$ be a real number. Is there a set of positive measure containing no sequence $(x + yb^{-n})_{n=1}^\infty$ for any $x, y \in \R$ with $y \neq 0$?
\end{question}

Theorem \ref{th:bgkmw-erdos} provides a single set that gives an affirmative answer for $x = 0$ but all $b > 1$. If one wanted to prove that a set of positive measure \textit{does} contain some given rapidly-decreasing sequence, a reasonable strategy would be to look for such a sequence near a Lebesgue density point (see the discussion in Section \ref{section 2} for bi-Lipschitz copies or affine copies of finite sets).   Theorem \ref{th:bgkmw-erdos} shows that this strategy fails for exponential sequences. 

The key step in the proof of Theorem \ref{th:bgkmw-erdos} is the following lemma.

\begin{lemma}[Burgin--Goldberg--Keleti--MacMahon--Wang]\label{lem:bgkmw}
    Let $a_n \to \infty$ be an increasing sequence of real numbers. If for each $0 \leq p < 1$, there exists a $p$-large set containing no affine copy of $(a_n)_{n=1}^\infty$, then there exists a set $F \subset \R$ such that
        \begin{align}\label{eq:keleti-size}
        \lim_{k \to \infty} m(F \cap [k, k+1]) = 1
        \end{align}
    and containing no affine copy of $(a_n)_{n=1}^\infty$.
\end{lemma}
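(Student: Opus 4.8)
The plan is to obtain the set $F$ by gluing together, via translations, countably many $p$-large sets with $p$ increasing to $1$, so that the "local density" of the glued set on the unit cell $[k,k+1]$ tends to $1$ as $k\to\infty$. The input hypothesis gives us, for each $m\in\N$, a $p_m$-large set $E_m$ with $p_m = 1 - 1/m$ containing no affine copy of $(a_n)_{n=1}^\infty$. The key structural fact I want to exploit is that an affine copy of $(a_n)$ is \emph{unbounded}, so it can only live inside a single one of the glued pieces if the pieces are arranged along disjoint, consecutive, eventually very long runs of unit cells. Concretely, I would pick an increasing sequence of integers $0 = N_0 < N_1 < N_2 < \cdots$ and define
\begin{align*}
    F = \bigcup_{m=1}^\infty \big(E_m \cap [N_{m-1}, N_m)\big).
\end{align*}
For $k\in[N_{m-1}, N_m)$ we have $m(F\cap[k,k+1]) = m(E_m\cap[k,k+1]) \ge p_m = 1-1/m$, and since $m\to\infty$ as $k\to\infty$, property \eqref{eq:keleti-size} follows immediately regardless of how the $N_m$ are chosen.

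The substance is in showing $F$ contains no affine copy of $(a_n)_{n=1}^\infty$. Suppose $(x + ya_n)_{n=1}^\infty \subset F$ with $y\neq 0$; without loss of generality $y>0$ (the case $y<0$ is symmetric, or one replaces $(a_n)$ by $(-a_n)$ in the bookkeeping), so $x + ya_n \to +\infty$. The tail of this sequence therefore eventually lies in $[N_{m-1},\infty)$ for every $m$; but $F\cap[N_{m-1},\infty) \subset \bigcup_{j\ge m}(E_j\cap[N_{j-1},N_j))$, and if the sequence visited two distinct blocks $[N_{j-1},N_j)$ and $[N_{j'-1},N_{j'})$ with $j<j'$ infinitely often we could still not derive a contradiction directly — so instead I pass to the tail beyond the point where the sequence enters $[N_{m-1},\infty)$ and never returns below it, and note that this tail is itself an (infinite) sub-configuration. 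The cleanest route: show that for a suitable choice of $N_m$, any affine copy of $(a_n)$ that meets $F$ must have \emph{infinitely many} terms inside one single block $E_m \cap [N_{m-1}, N_m)$, and then those terms, together with the structure of an affine copy, force a genuine affine copy of a \emph{sub-sequence} $(a_{n_k})_{k=1}^\infty$ — but a subsequence of $(a_n)$ need not be among the sets $E_m$ avoids, so this naive approach is insufficient.

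The correct approach, and the one I would actually carry out, is to be more careful: choose the $N_m$ so rapidly growing that whenever $(x+ya_n)_{n=1}^\infty \subset F$, \emph{all but finitely many} terms lie in a single block. To arrange this, I first reduce to $y$ ranging over a controlled set, or rather I argue by dilation invariance of $p$-largeness: note $E_m' := \frac{1}{N_{m-1}} E_m$ is still $p_m$-large (largeness of a set is preserved under dilation by the cell-counting definition, up to harmless adjustments), and contains no affine copy of $(a_n)$ since affine copies are preserved under dilation. The point of rescaling is that the affine copy $(x+ya_n)$, once it enters the region corresponding to block $m$, is compared against a set that "looks the same at all scales" — but exponential/general growth still lets $y$ be arbitrarily large, so the copy's gaps $y(a_{n+1}-a_n)$ could straddle block boundaries. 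Here is where I invoke the key idea from the Bradford--Kohut--Mooroogen argument sketched above: since an affine copy of $(a_n)$ is unbounded and its consecutive gaps $y(a_{n+1}-a_n) \ge y \inf(a_{n+1}-a_n)$ are bounded below, while the block boundary $N_m$ can be crossed at most... no — the gaps can be arbitrarily large. So the genuinely safe statement is only that the copy eventually lies in $[N_{m-1},\infty)$; it can wander through infinitely many blocks.

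I expect the main obstacle to be exactly this: ruling out an affine copy that is "spread across infinitely many blocks." The resolution I anticipate (and would present) is to observe that the sequence $(a_n)$ avoided by each $E_m$ can be taken to be the \emph{same} sequence for all $m$, and then to intersect rather than union: one can instead build $F$ so that $F \cap [N_{m-1},\infty)$ is contained in a \emph{single} $p_m$-large set avoiding $(a_n)$, by taking $F = \bigcap_m \big(E_m \cup [0, N_{m-1})\big)$ for rapidly increasing $N_m$ — equivalently $F \cap [N_{m-1},\infty) \subseteq E_m$ for every $m$. Then if $(x+ya_n)_{n=1}^\infty\subset F$ with the sequence $\to+\infty$, its tail beyond entering $[N_{m-1},\infty)$ lies in $E_m$; but a tail of an affine copy of $(a_n)$ is not an affine copy of $(a_n)$ (it is an affine copy of a shifted subsequence), so one more reduction is needed — replace $(a_n)$ by the sequence of \emph{all} its tails simultaneously, or rather observe that since we get to choose the $E_m$ from the hypothesis applied to the sequence $(a_n)_{n=N}^\infty$ for each $N$ (which has the same growth), a diagonal choice over all tails and all $p_m$ works. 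I would: (1) enumerate pairs $(m,N)$; (2) for each, invoke the hypothesis on $(a_n)_{n=N}^\infty$ to get a $p_m$-large avoiding set; (3) intersect these over a window scheme $[N_{j-1},\infty)$ so that on each window $F$ sits inside one such avoiding set; (4) verify \eqref{eq:keleti-size} from $p_m\to 1$; (5) check that any affine copy $\to\pm\infty$ has a tail trapped in one window, contradicting avoidance of the appropriate tail sequence. The delicate point throughout — and the one I'd slow down for — is the interplay between "affine copy of a tail" versus "affine copy of the whole sequence," which is why the diagonalization over all tails is essential.
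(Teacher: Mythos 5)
The paper itself does not reproduce a proof of this lemma (it is quoted from \cite{BGKMW23}), so your proposal has to stand on its own, and as written it does not close. The fatal problem is in the density verification for your final construction. You take $F=\bigcap_m\bigl(E_m\cup[0,N_{m-1})\bigr)$, i.e.\ you require $F\cap[N_{m-1},\infty)\subseteq E_m$ for \emph{every} $m$, and assert that \eqref{eq:keleti-size} ``follows from $p_m\to 1$.'' It does not: already the case $m=1$ forces $F\cap[N_0,\infty)\subseteq E_1$, hence $m(F\cap[k,k+1])\le m(E_1\cap[k,k+1])$ for all large $k$, and nothing in the hypothesis prevents the $p_1$-large set $E_1$ you are handed from having measure exactly $p_1<1$ in every unit cell (compare the sets in \eqref{eq:bkm-basic}). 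More generally, on the block $[N_{j-1},N_j)$ your $F$ is an intersection of $j$ sets whose defects $1-p_i$ add, and with $p_m=1-1/m$ the sum $\sum_i 1/i$ diverges; even with summable defects the per-cell measure of $F$ stays bounded away from $1$ rather than tending to $1$. Your first construction (disjoint blocks $E_m\cap[N_{m-1},N_m)$) has the opposite defect, which you correctly diagnosed yourself: it gives \eqref{eq:keleti-size} but cannot exclude a copy spread over infinitely many blocks. At no point do you exhibit a single construction meeting both requirements.

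A secondary gap: your avoidance argument invokes the hypothesis for the tail sequences $(a_n)_{n=N}^\infty$, which the lemma does not grant. The hypothesis concerns only $(a_n)_{n=1}^\infty$, and a set avoiding every affine copy of the full sequence may perfectly well contain affine copies of a tail (avoiding all tails is a strictly stronger property). The tail detour is in fact unnecessary: once $F\subseteq[0,\infty)$ (which disposes of $y<0$), a copy with $y>0$ lies \emph{entirely}, first term included, in $[x+ya_1,\infty)$, so it is trapped above whichever threshold $N_{j-1}$ sits below its first term; the genuine obstruction is only the density count above. The idea you are missing is that $F^c$ cannot contain a whole set $E_j^c$ on a half-line; instead one must extract from each avoiding set a blocking set of \emph{finite total measure} that still meets every copy whose first term and dilation parameter are confined to a fixed compact set (this can be done by a compactness argument after replacing $E_j$ with a closed subset of nearly the same density), and then distribute integer translates and rescalings of these finite-measure blockers so that their total contribution to $[k,k+1]$ tends to $0$ as $k\to\infty$. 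For the precise bookkeeping you should consult \cite{BGKMW23}.
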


\begin{proof}[Proof of Theorem \ref{th:bgkmw-erdos}]
    Theorem \ref{th:bkm}, together with the above lemma, allows us to find a set $F \subset \R$ such that \eqref{eq:keleti-size} holds, and that contains no sequence $(x + yn)_{n=1}^\infty$ for any $x, y \in \R$ with $y \neq 0$. Clearly the set $E = \exp(-F)$ cannot contain any sequence $(cb^{-n})_{n = 1}^\infty$ for all $c, b \in \R$ (where $c = e^{-x}$, $b = e^{y}$). Finally, using \eqref{eq:keleti-size}, it is straightforward to show that $E$ has $0$ as a Lebesgue density point.
\end{proof}

Lemma \ref{lem:bgkmw} connects the results on Question \ref{q:large-erdos} to the Székely-type problems mentioned at the start of Section \ref{sec:large}. Indeed, if a measurable set $E \subseteq \R$ satisfies \eqref{eq:keleti-size}, then it is $p$-large for \textit{every} $0 \leq p < 1$. It follows that $E$ has upper density $1$. By combining Lemma \ref{lem:bgkmw} with the theorems in Subsection \ref{sec:results-large}, we obtain many examples of \textit{infinite} configurations that are not universal among sets with full upper density. In particular, this shows that upper density is not sufficient to guarantee the existence of ``all large copies'' of these infinite configurations.

\noindent{\bf Acknowledgments.} Chun-Kit Lai would like to thank his former graduate students, Angel Cruz, John Gallagher, Lekha Priya Patil, Danny Lopez, Nicholas Mendler and Yeonwook Jung (one of the authors in this article), for all the wonderful discussions about the Erd\H{o}s similarity problem over the last few years at SFSU. The authors would also like to thank professors De-Jun Feng,  Mihalis Kolountzakis, Malabika Pramanik and Ying Xiong for their support and discussions. 

\noindent{\bf Funding.} Chun-Kit Lai is partially supported by the AMS-Simons Research Enhancement Grants for Primarily Undergraduate Institution (PUI) Faculty.

\bibliographystyle{amsalpha}
\bibliography{biblio.bib}

\newcommand{\etalchar}[1]{$^{#1}$}
\providecommand{\bysame}{\leavevmode\hbox to3em{\hrulefill}\thinspace}
\providecommand{\MR}{\relax\ifhmode\unskip\space\fi MR }
\providecommand{\MRhref}[2]{%
  \href{http://www.ams.org/mathscinet-getitem?mr=#1}{#2}
}
\providecommand{\href}[2]{#2}
\begin{thebibliography}{BGK{\etalchar{+}}23}

\bibitem[Ami64]{Amice}
Yvette Amice, \emph{Un th{é}or{è}me de finitude}, Ann. Inst. Fourier (Grenoble) \textbf{14} (1964), no.~2, 527--531. \MR{174542}

\bibitem[BGK{\etalchar{+}}23]{BGKMW23}
Alex Burgin, Samuel Goldberg, Tam\'{a}s Keleti, Connor MacMahon, and Xianzhi Wang, \emph{Large sets avoiding infinite arithmetic / geometric progressions}, Real Anal. Exchange \textbf{48} (2023), no.~2, 351--364. \MR{4668953}

\bibitem[BKM23]{BKM23}
Laurestine Bradford, Hannah Kohut, and Yuveshen Mooroogen, \emph{Large subsets of {E}uclidean space avoiding infinite arithmetic progressions}, Proc. Amer. Math. Soc. \textbf{151} (2023), no.~8, 3535--3545. \MR{4591786}

\bibitem[Bos94]{Boshernitzan94}
Michael~D. Boshernitzan, \emph{Density modulo {$1$} of dilations of sublacunary sequences}, Adv. Math. \textbf{108} (1994), no.~1, 104--117. \MR{1293584}

\bibitem[Bou86]{SzekelyBourgain}
J.~Bourgain, \emph{A {S}zemer{\'e}di type theorem for sets of positive density in {$\mathbb{R}^k$}}, Israel J. Math. \textbf{54} (1986), no.~3, 307--316. \MR{853455}

\bibitem[Bou87]{Bourgain87}
\bysame, \emph{Construction of sets of positive measure not containing an affine image of a given infinite structures}, Israel J. Math. \textbf{60} (1987), no.~3, 333--344. \MR{937795}

\bibitem[Car93]{Carlson}
Timothy~J. Carlson, \emph{Strong measure zero and strongly meager sets}, Proc. Amer. Math. Soc. \textbf{118} (1993), no.~2, 577--586.

\bibitem[CLP23]{CruzLaiPramanik23}
Angel~D. Cruz, Chun-Kit Lai, and Malabika Pramanik, \emph{Large sets avoiding affine copies of infinite sequences}, Real Anal. Exchange \textbf{48} (2023), no.~2, 251--270. \MR{4668947}

\bibitem[CMP17]{CMP}
Brian Cook, {\'A}kos Magyar, and Malabika Pramanik, \emph{A {R}oth-type theorem for dense subsets of {$\mathbb{R}^d$}}, Bull. Lond. Math. Soc. \textbf{49} (2017), no.~4, 676--689. \MR{3725488}

\bibitem[DK21]{DK1}
Polona Durcik and Vjekoslav Kova{\v c}, \emph{Boxes, extended boxes and sets of positive upper density in the {E}uclidean space}, Math. Proc. Cambridge Philos. Soc. \textbf{171} (2021), no.~3, 481--501. \MR{4324955}

\bibitem[DK22]{DK2}
\bysame, \emph{A {S}zemer{\'e}di-type theorem for subsets of the unit cube}, Anal. PDE \textbf{15} (2022), no.~2, 507--549. \MR{4409885}

\bibitem[DKR18]{DK3}
Polona Durcik, Vjekoslav Kova{\v c}, and Luka Rimani{\'c}, \emph{On side lengths of corners in positive density subsets of the {E}uclidean space}, Int. Math. Res. Not. IMRN (2018), no.~22, 6844--6869. \MR{3878595}

\bibitem[dM80]{Mathan80}
B.~de~Mathan, \emph{Numbers contravening a condition in density modulo {$1$}}, Acta Math. Acad. Sci. Hungar. \textbf{36} (1980), no.~3-4, 237--241 (1981). \MR{612195}

\bibitem[DMT60]{DMT60}
Roy~O. Davies, J.~M. Marstrand, and S.~J. Taylor, \emph{On the intersections of transforms of linear sets}, Colloq. Math. \textbf{7} (1959/60), 237--243. \MR{112938}

\bibitem[Dub06a]{Dubickas06}
Art\={u}ras Dubickas, \emph{Arithmetical properties of powers of algebraic numbers}, Bull. London Math. Soc. \textbf{38} (2006), no.~1, 70--80. \MR{2201605}

\bibitem[Dub06b]{D06b}
\bysame, \emph{On the distance from a rational power to the nearest integer}, J. Number Theory \textbf{117} (2006), no.~1, 222--239. \MR{2204744}

\bibitem[Eig85]{Eigen85}
S.~J. Eigen, \emph{Putting convergent sequences into measurable sets}, Studia Sci. Math. Hungar. \textbf{20} (1985), no.~1-4, 411--412. \MR{886044}

\bibitem[EKM81]{erdos-kunen-mauldin}
Paul Erd\H{o}s, Kenneth Kunen, and Richard~D. Mauldin, \emph{Some additive properties of sets of real numbers}, Fundamenta Mathematicae \textbf{113} (1981), 187--199.

\bibitem[Erd74]{Erdos74}
P.~Erd\H{o}s, \emph{Remarks on some problems in number theory}, Math. Balkanica \textbf{4} (1974), 197--202. \MR{429704}

\bibitem[Fal84]{Falconer84}
K.~J. Falconer, \emph{On a problem of {{E}rd\H{o}s} on sequences and measurable sets}, Proc. Amer. Math. Soc. \textbf{90} (1984), no.~1, 77--78. \MR{722418}

\bibitem[FKW90]{SzekelyFKW}
Hillel Furstenberg, Yitzchak Katznelson, and Benjamin Weiss, \emph{Ergodic theory and configurations in sets of positive density}, Algorithms Combin., vol.~5, Springer, Berlin, 1990. \MR{1083601}

\bibitem[FKY22]{FKY}
Kenneth {Falconer}, Vjekoslav Kova{\v c}, and Alexia Yavicoli, \emph{The density of sets containing large similar copies of finite sets}, J. Anal. Math. \textbf{148} (2022), no.~1, 339--359. \MR{4525756}

\bibitem[FLX24]{Bilipschitz}
De-Jun Feng, Chun-Kit Lai, and Ying Xiong, \emph{Erdos similarity problem via bi-{L}ipschitz embedding}, Int. Math. Res. Not. IMRN (2024), no.~17, 12327--12342. \MR{4795004}

\bibitem[FM86]{SzekelyMarstrand}
K.~J. Falconer and J.~M. Marstrand, \emph{Plane sets with positive density at infinity contain all large distances}, Bull. London Math. Soc. \textbf{18} (1986), no.~5, 471--474. \MR{847986}

\bibitem[GLW23]{GallagherLaiWeber23}
John Gallagher, Chun-Kit Lai, and Eric Weber, \emph{On a topological {{E}rd\H{o}s} similarity problem}, Bull. Lond. Math. Soc. \textbf{55} (2023), no.~3, 1104--1119. \MR{4599101}

\bibitem[GMY24]{GMY24}
Xiang {Gao}, Yuveshen {Mooroogen}, and Chi~Hoi {Yip}, \emph{{On an Erd{ő}s similarity problem in the large}}, arXiv e-prints (2024), arXiv:2311.06727.

\bibitem[Hai88]{Haight88}
J.~A. Haight, \emph{On multiples of certain real sequences}, Acta Arith. \textbf{49} (1988), no.~3, 303--306. \MR{932529}

\bibitem[HLM17]{LM5}
Lauren Huckaba, Neil Lyall, and {\'A}kos Magyar, \emph{Simplices and sets of positive upper density in {$\mathbb{R}^d$}}, Proc. Amer. Math. Soc. \textbf{145} (2017), no.~6, 2335--2347. \MR{3626493}

\bibitem[Jas96]{Jas-95}
Jakub Jasinski, \emph{Large sets containing copies of small sets}, Real Anal. Exchange \textbf{21} (1995/96), no.~2, 758--766. \MR{1407291}

\bibitem[JL24a]{jung2024interiorcertainsumscontinuous}
Yeonwook Jung and Chun-Kit Lai, \emph{Interior of certain sums and continuous images of very thin cantor sets}, 2024.

\bibitem[JL24b]{jung2024topologicalerdhossimilarityconjecture}
\bysame, \emph{Topological {E}rd{ő}s similarity conjecture and strong measure zero sets}, 2024.

\bibitem[Kah64]{Kahane64}
Jean-Pierre Kahane, \emph{Sur les mauvaises r\'{e}partitions modulo {$1$}}, Ann. Inst. Fourier (Grenoble) \textbf{14} (1964), no.~fasc. 2, 519--526. \MR{174545}

\bibitem[Kel08]{K08}
Tam\'as Keleti, \emph{Construction of one-dimensional subsets of the reals not containing similar copies of given patterns}, Anal. PDE \textbf{1} (2008), no.~1, 29--33. \MR{2431353}

\bibitem[KN74]{KN74}
L.~Kuipers and H.~Niederreiter, \emph{Uniform distribution of sequences}, Pure and Applied Mathematics, Wiley-Interscience [John Wiley \& Sons], New York-London-Sydney, 1974. \MR{0419394}

\bibitem[Kol97]{Kolountzakis97}
Mihail~N. Kolountzakis, \emph{Infinite patterns that can be avoided by measure}, Bull. London Math. Soc. \textbf{29} (1997), no.~4, 415--424. \MR{1446560}

\bibitem[Kol04]{Sz-Kolountzakis}
M.~N. Kolountzakis, \emph{Distance sets corresponding to convex bodies}, Geom. Funct. Anal. \textbf{14} (2004), no.~4, 734--744. \MR{2084977}

\bibitem[Kol23]{Kol-cantor-set}
Mihail~N. Kolountzakis, \emph{Sets of full measure avoiding {C}antor sets}, Bull. Hellenic Math. Soc. \textbf{67} (2023), 1--11. \MR{4541680}

\bibitem[Kov22]{LM4}
Vjekoslav Kova{\v c}, \emph{Density theorems for anisotropic point configurations}, Canad. J. Math. \textbf{74} (2022), no.~5, 1244--1276. \MR{4504663}

\bibitem[KP22]{KolountzakisPapageorgiou23}
Mihail~N. {Kolountzakis} and Effie {Papageorgiou}, \emph{{Large sets containing no copies of a given infinite sequence}}, arXiv e-prints (2022), arXiv:2208.02637, to appear in Analysis \& PDE.

\bibitem[Lav76]{Laver}
Richard Laver, \emph{On the consistency of {B}orel's conjecture}, Acta Mathematica \textbf{137} (1976), no.~none, 151 -- 169.

\bibitem[LM18]{LM3}
Neil Lyall and {\'A}kos Magyar, \emph{Product of simplices and sets of positive upper density in {$\mathbb{R}^d$}}, Math. Proc. Cambridge Philos. Soc. \textbf{165} (2018), no.~1, 25--51. \MR{3811544}

\bibitem[LM20]{DG1}
\bysame, \emph{Distance graphs and sets of positive upper density in {$\mathbb{R}^d$}}, Anal. PDE \textbf{13} (2020), no.~3, 685--700. \MR{4085119}

\bibitem[LM22]{LM1}
\bysame, \emph{Weak hypergraph regularity and applications to geometric {R}amsey theory}, Trans. Amer. Math. Soc. Ser. B \textbf{9} (2022), 160--207. \MR{4396041}

\bibitem[LP09]{LP09}
Izabella Laba and Malabika Pramanik, \emph{Arithmetic progressions in sets of fractional dimension}, Geom. Funct. Anal. \textbf{19} (2009), no.~2, 429--456. \MR{2545245}

\bibitem[LP22]{LP22}
Yiyu Liang and Malabika Pramanik, \emph{Fourier dimension and avoidance of linear patterns}, Adv. Math. \textbf{399} (2022), Paper No. 108252, 50. \MR{4384610}

\bibitem[Mor15]{LM2}
Ian~D. Morris, \emph{A note on configurations in sets of positive density which occur at all large scales}, Israel J. Math. \textbf{207} (2015), no.~2, 719--738. \MR{3359715}

\bibitem[MY20]{UY2020}
Ursula Molter and Alexia Yavicoli, \emph{Small sets containing any pattern}, Math. Proc. Cambridge Philos. Soc. \textbf{168} (2020), no.~1, 57--73. \MR{4043821}

\bibitem[Pol79]{Pollington79}
A.~D. Pollington, \emph{On the density of sequence {$\{n\sb{k}\xi \}$}}, Illinois J. Math. \textbf{23} (1979), no.~4, 511--515. \MR{540398}

\bibitem[Qua09]{SzekelyQuas}
Anthony Quas, \emph{Distances in positive density sets in {$\mathbb{R}^d$}}, J. Combin. Theory Ser. A \textbf{116} (2009), no.~4, 979--987. \MR{2513646}

\bibitem[Shm17]{S2017}
Pablo Shmerkin, \emph{Salem sets with no arithmetic progressions}, Int. Math. Res. Not. IMRN (2017), no.~7, 1929--1941. \MR{3658188}

\bibitem[Sie28]{Sierpinski}
Wacław Sierpiński, \emph{Sur un ensemble non dénombrable, dont toute image continue est de mesure nulle}, Fundamenta Mathematicae \textbf{11(1)} (1928), 302--304 (fr).

\bibitem[SS18]{martingales}
Pablo Shmerkin and Ville Suomala, \emph{Spatially independent martingales, intersections, and applications}, Mem. Amer. Math. Soc. \textbf{251} (2018), no.~1195, v+102. \MR{3756896}

\bibitem[Ste20]{Steinhaus}
Hugo Steinhaus, \emph{Sur les distances des points dans les ensembles de mesure positive}, Fund. Math. \textbf{1} (1920), no.~1, 93--104.

\bibitem[Sve01]{Svetic01}
R.~E. Svetic, \emph{The {{E}rd\H{o}s} similarity problem: a survey}, Real Anal. Exchange \textbf{26} (2000/01), no.~2, 525--539. \MR{1844133}

\bibitem[Sz{\'e}83]{Szekely}
L.~A. Sz{\'e}kely, \emph{Remarks on the chromatic number of geometric graphs}, Teubner-Texte Math., vol.~59, Teubner, Leipzig, 1983. \MR{737057}

\bibitem[Tao21]{Tao2021}
Terence Tao, \emph{Exploring the toolkit of {J}ean {B}ourgain}, Bull. Amer. Math. Soc. (N.S.) \textbf{58} (2021), no.~2, 155--171. \MR{4229148}

\bibitem[Yav21]{Ya2021}
Alexia Yavicoli, \emph{Large sets avoiding linear patterns}, Proc. Amer. Math. Soc. \textbf{149} (2021), no.~10, 4057--4066. \MR{4305962}

\end{thebibliography}

\author{Yeonwook Jung}
\address{University of California, Irvine, Department of Mathematics, Rowland Hall, Irvine, CA 92697}
\curraddr{}
\email{yeonwoj1@uci.edu}
\thanks{}

\author{Chun-Kit Lai}
\address{San Francisco State University Department of Mathematics, 1600 Holloway Ave, San Francisco, CA 94132
}
\curraddr{}
\email{cklai@sfsu.edu}
\thanks{}

\author{Yuveshen Mooroogen}
\address{University of British Columbia, 2329 West Mall
Vancouver, BC Canada V6T 1Z4}
\email{yuveshenm@math.ubc.ca}





\end{document}